\theoremstyle{definition}\newtheorem{definition}{Definition}
\newtheorem{remark}[definition]{Remark}
\newtheorem{example}[definition]{Example}}
\newtheorem{proposition}[definition]{Proposition}
\newtheorem{lemma}[definition]{Lemma}
\newtheorem{theorem}[definition]{Theorem}
\newcommand{\Inn}{\operatorname{Inn}}
\newcommand{\C}{\mathbb{C}}
\newcommand{\cR}{\mathcal{R}}
\newcommand{\actson}{\curvearrowright}
\newcommand{\SL}{\operatorname{SL}}
\newcommand{\rL}{\mathord{\text{\rm L}}}
\newcommand{\Aut}{\operatorname{Aut}}
\newcommand{\Out}{\operatorname{Out}}
\newcommand{\N}{\mathbb{N}}
\newcommand{\T}{\mathbb{T}}
\newcommand{\Z}{\mathbb{Z}}
\newcommand{\cF}{\mathcal{F}}
\newcommand{\cV}{\mathcal{V}}
\newcommand{\id}{\mathord{\operatorname{id}}}
\newcommand{\si}{\sigma}
\newcommand{\recht}{\rightarrow}
\newcommand{\cU}{\mathcal{U}}
\newcommand{\vphi}{\varphi}
\newcommand{\cW}{\mathcal{W}}
\newcommand{\R}{\mathbb{R}}
\newcommand{\al}{\alpha}
\newcommand{\eps}{\varepsilon}
\newcommand{\Tr}{\operatorname{Tr}}
\newcommand{\ovt}{\mathbin{\overline{\otimes}}}
\newcommand{\B}{\operatorname{B}}
\newcommand{\om}{\omega}
\newcommand{\cP}{\mathcal{P}}
\newcommand{\Q}{\mathbb{Q}}
\newcommand{\Ker}{\operatorname{Ker}}
\newcommand{\ot}{\otimes}
\newcommand{\Ad}{\operatorname{Ad}}
\newcommand{\cG}{\mathcal{G}}
\newcommand{\Stab}{\operatorname{Stab}}
\newcommand{\PSL}{\operatorname{PSL}}
\newcommand{\cN}{\mathcal{N}}
\newcommand{\Om}{\Omega}
\newcommand{\cC}{\mathcal{C}}
\newcommand{\Deltab}{\overline{\Delta}}
\newcommand{\Thetab}{\overline{\Theta}}
\newcommand{\Psib}{\overline{\Psi}}
\newcommand{\deltab}{\overline{\delta}}
\newcommand{\etab}{\overline{\eta}}
\newcommand{\cartan}{\operatorname{Cartan}}
\newcommand{\vnalg}{\operatorname{vNalg}}
\newcommand{\masa}{\operatorname{masa}}
\newcommand{\subgr}{\operatorname{subgr}}
\newcommand{\trees}{\operatorname{Trees}}
\begin{document}

\begin{center}
{\boldmath\LARGE\bf A class of II$_1$ factors with many non conjugate \vspace{0.5ex}\\ Cartan subalgebras}

\bigskip

{\sc by An Speelman\footnote{K.U.Leuven, Department of Mathematics, an.speelman@wis.kuleuven.be \\ Research Assistant of the Research Foundation --
    Flanders (FWO)} and Stefaan Vaes\footnote{K.U.Leuven, Department of Mathematics, stefaan.vaes@wis.kuleuven.be \\ Partially
    supported by ERC Starting Grant VNALG-200749, Research
    Programme G.0639.11 of the Research Foundation --
    Flanders (FWO) and K.U.Leuven BOF research grant OT/08/032.}}
\end{center}

\begin{abstract}\noindent
We construct a class of II$_1$ factors $M$ that admit unclassifiably many Cartan subalgebras in the sense that the equivalence relation of being conjugate by an automorphism of $M$ is complete analytic, in particular non Borel. We also construct a II$_1$ factor that admits uncountably many non isomorphic group measure space decompositions, all involving the same group $G$. So $G$ is a group that admits uncountably many non stably orbit equivalent actions whose crossed product II$_1$ factors are all isomorphic.
\end{abstract}

\section{Introduction}

Free ergodic probability measure preserving (p.m.p.) actions $\Gamma \actson (X,\mu)$ of countable groups give rise to II$_1$ factors $\rL^\infty(X) \rtimes \Gamma$ by Murray and von Neumann's group measure space construction. In \cite{Si55} it was shown that the isomorphism class of $\rL^\infty(X) \rtimes \Gamma$ only depends on the orbit equivalence relation on $(X,\mu)$ given by $x \sim y$ iff $x \in \Gamma \cdot y$. More precisely, two free ergodic p.m.p.\ actions $\Gamma \actson X$ and $\Lambda \actson Y$ are orbit equivalent iff there exists an isomorphism of $\rL^\infty(X) \rtimes \Gamma$ onto $\rL^\infty(Y) \rtimes \Lambda$ sending $\rL^\infty(X)$ onto $\rL^\infty(Y)$.

If $\Gamma \actson (X,\mu)$ is essentially free and $M := \rL^\infty(X) \rtimes \Gamma$, then $A := \rL^\infty(X)$ is a maximal abelian subalgebra of $M$ that is regular: the group of unitaries $u \in \cU(M)$ that normalize $A$ in the sense that $u A u^* = A$, spans a dense $*$-subalgebra of $M$. A regular maximal abelian subalgebra of a II$_1$ factor is called a \emph{Cartan subalgebra.}

In order to classify classes of group measure space II$_1$ factors $M := \rL^\infty(X) \rtimes \Gamma$ in terms of the group action $\Gamma \actson (X,\mu)$, one must solve two different problems. First one should understand whether the Cartan subalgebra $\rL^\infty(X) \subset M$ is, in some sense, unique. If this is the case, any other group measure space decomposition $M = \rL^\infty(Y) \rtimes \Lambda$ must come from an orbit equivalence between $\Gamma \actson X$ and $\Lambda \actson Y$. The second problem is then to classify the corresponding class of orbit equivalence relations $\cR(\Gamma \actson X)$ in terms of the group action. Both problems are notoriously hard, but a huge progress has been made over the last 10 years thanks to Sorin Popa's deformation/rigidity theory (see \cite{Po06a,Ga10,Va10a} for surveys). In particular, many uniqueness results for Cartan subalgebras and group measure space Cartan subalgebras have been established recently in \cite{OP07,OP08,Pe09,PV09,FV10,CP10,HPV10,Va10b,Io11,CS11}.

In this paper we concentrate on ``negative'' solutions to the first problem by exhibiting a class of II$_1$ factors with many Cartan subalgebras. First recall that two Cartan subalgebras $A \subset M$ and $B \subset M$ of a II$_1$ factor are called \emph{unitarily conjugate} if there exists a unitary $u \in \cU(M)$ such that $B = u A u^*$. The Cartan subalgebras are called \emph{conjugate by an automorphism} if there exists $\al \in \Aut M$ such that $B = \al(A)$. In \cite{CFW81} it is shown that the hyperfinite II$_1$ factor $R$ has a unique Cartan subalgebra up to conjugacy by an automorphism of $R$. By \cite[Theorem 7]{FM75} (see \cite[Section 4]{Pa83} for details) the hyperfinite II$_1$ factor however admits uncountably many Cartan subalgebras that are not unitarily conjugate. In \cite{CJ82} Connes and Jones constructed the first example of a II$_1$ factor that admits two Cartan subalgebras that are not conjugate by an automorphism. More explicit examples of this phenomenon were found in \cite[Section 7]{OP08} and \cite[Example 5.8]{PV09}.

In \cite[Section 6.1]{Po06b} Popa discovered a free ergodic p.m.p.\ action $\Gamma \actson (X,\mu)$ such that the orbit equivalence relation $\cR(\Gamma \actson X)$ has trivial fundamental group, while the II$_1$ factor $M:=\rL^\infty(X) \rtimes \Gamma$ has fundamental group $\R_+$. Associated with this is a one parameter family $A_t$ of Cartan subalgebras of $M$ that are not conjugate by an automorphism of $M$. Indeed, put $A := \rL^\infty(X)$ and for every $t \in (0,1]$, choose a projection $p_t \in A$ of trace $t$ and a $*$-isomorphism $\pi_t : p_t M p_t \recht M$. Put $A_t := \pi_t(A p_t)$. Then $A_t \subset M$, $t \in (0,1]$, are Cartan subalgebras that are not conjugate by an automorphism.
Note that earlier (see \cite[Corollary of Theorem 3]{Po90} and \cite[Corollary 4.7.2]{Po86}) Popa found free ergodic p.m.p.\ actions $\Gamma \actson (X,\mu)$ such that the equivalence relation $\cR(\Gamma \actson X)$ has countable fundamental group, while the II$_1$ factor $\rL^\infty(X) \rtimes \Gamma$ has fundamental group $\R_+$. By the same construction, there are uncountably many $t$ such that the Cartan subalgebras $A_t$ are not conjugate by an automorphism. In all these cases, although the Cartan subalgebras $A_t \subset M$ are not conjugate by an automorphism of $M$, they are by construction conjugate by a stable automorphism of $M$. More precisely, we say that two Cartan subalgebras $A \subset M$ and $B \subset M$ are \emph{conjugate by a stable automorphism of $M$} if there exist non zero projections $p \in A$, $q \in B$ and a $*$-isomorphism $\al : pMp \recht qMq$ satisfying $\al(A p) = B q$.

In this paper we provide the first class of II$_1$ factors $M$ that admit uncountably many Cartan subalgebras that are not conjugate by a stable automorphism of $M$. Actually we obtain II$_1$ factors $M$ such that the equivalence relation ``being conjugate by an automorphism'' on the set of Cartan subalgebras of $M$ is complete analytic, in particular non Borel. In this precise sense the decision problem whether two Cartan subalgebras of these II$_1$ factors $M$ are conjugate by an automorphism, is as hard as it can possibly get (see Remark \ref{rem.descriptive}). Using Popa's conjugacy criterion for Cartan subalgebras \cite[Theorem A.1]{Po01}, we show that for any II$_1$ factor with separable predual, the set of Cartan subalgebras is a standard Borel space and the equivalence relation ``being unitarily conjugate'' is always Borel. We provide examples of II$_1$ factors where this equivalence relation is not concretely classifiable.

The ``many'' Cartan subalgebras in the II$_1$ factors $M$ in the previous paragraph are not of group measure space type. In the final section of the paper we give the first example of a II$_1$ factor that admits uncountably many Cartan subalgebras that are not conjugate by a (stable) automorphism and that all arise from a group measure space construction. They actually all arise as the crossed product with a single group $G$. This means that we construct a group $G$ that admits uncountably many free ergodic p.m.p.\ actions $G \actson (X_i,\mu_i)$ that are non stably orbit equivalent, but such that the II$_1$ factors $\rL^\infty(X_i) \rtimes G$ are all isomorphic. This group $G$ should be opposed to the classes of groups considered in \cite{PV09,FV10,CP10,HPV10,Va10b} for which isomorphism of the group measure space II$_1$ factors always implies orbit equivalence of the group actions.

\subsection*{Terminology}

A \emph{Cartan subalgebra} $A$ of a II$_1$ factor $M$ is a maximal abelian von Neumann subalgebra whose normalizer
$$\cN_M(A) := \{u \in \cU(M) \mid u A u^* = A \}$$
generates $M$ as a von Neumann algebra. As mentioned above, we say that two Cartan subalgebras $A \subset M$ and $B \subset M$ are \emph{unitarily conjugate} if there exists a unitary $u \in \cU(M)$ such that $uAu^* = B$. We say that they are \emph{conjugate by an automorphism of $M$} if there exists $\al \in \Aut M$ such that $\al(A)= B$. Finally $A$ and $B$ are said to be \emph{conjugate by a stable automorphism of $M$} if there exist non zero projections $p \in A$, $q \in B$ and a $*$-isomorphism $\al : pMp \recht qMq$ satisfying $\al(A p) = Bq$.

By \cite{FM75} every II$_1$ equivalence relation $\cR$ on $(X,\mu)$ gives rise to a II$_1$ factor $\rL \cR$ that contains $\rL^\infty(X)$ as a Cartan subalgebra. Conversely, if $\rL^\infty(X) = A \subset M$ is an arbitrary Cartan subalgebra of a II$_1$ factor $M$, we get a natural II$_1$ equivalence relation $\cR$ on $(X,\mu)$ and a measurable $2$-cocycle on $\cR$ with values in $\T$ such that $A \subset M$ is canonically isomorphic with $\rL^\infty(X) \subset \rL_\Omega \cR$.

If $\cR_i$ are II$_1$ equivalence relations on the standard probability spaces $(X_i,\mu_i)$, we say that $\cR_1$ and $\cR_2$ are \emph{orbit equivalent} if there exists a measure space isomorphism $\Delta : X_1 \recht X_2$ such that $(x,y) \in \cR_1$ iff $(\Delta(x),\Delta(y)) \in \cR_2$ almost everywhere. More generally, $\cR_1$ and $\cR_2$ are called \emph{stably orbit equivalent} if there exist non negligible measurable subsets $\cU_i \subset X_i$ such that the restricted equivalence relations $\cR_{i|\cU_i}$ are orbit equivalent.

Throughout this paper compact groups are supposed to be second countable. Two subgroups $K_1,K_2 < K$ of the same group $K$ are called \emph{commensurate} if $K_1 \cap K_2$ has finite index in both $K_1$ and $K_2$.

\subsection*{Statement of the main results}

Let $\Gamma$ be a countable group and $K$ a compact abelian group with countable dense subgroup $\Lambda < K$.  Consider the II$_1$ factor
$$M := \rL^\infty(K^\Gamma) \rtimes (\Gamma \times \Lambda)$$
where $\Gamma \actson K^\Gamma$ is the Bernoulli action given by $(g \cdot x)_h = x_{hg}$ and $\Lambda \actson K^\Gamma$ acts by diagonal translation: $(\lambda \cdot x)_h = \lambda x_h$.

Whenever $K_1 < K$ is a closed subgroup such that $\Lambda_1 := \Lambda \cap K_1$ is dense in $K_1$, denote
$$\cC(K_1) := \rL^\infty(K^\Gamma/K_1) \rtimes \Lambda_1 = \rL^\infty(K^\Gamma/K_1) \ovt \rL \Lambda_1 \; .$$
Note that $\cC(K_1)$ is an abelian von Neumann subalgebra of $M$. In Lemma \ref{lemma.cartan} we prove that $\cC(K_1) \subset M$ is always a Cartan subalgebra. As we will see there, the density of $\Lambda \cap K_1$ in $K_1$ is equivalent with $\cC(K_1)$ being maximal abelian in $M$.

The following is our main result, providing criteria for when the Cartan subalgebras $\cC(K_1)$ are unitarily conjugate, conjugate by a (stable) automorphism or inducing (stable) orbit equivalent relations. As we shall see later, concrete choices of $\Lambda \subset K$ then lead to examples of II$_1$ factors where the equivalence relation of ``being conjugate by an automorphism'' is complete analytic and where the equivalence relation of ``being unitarily conjugate'' is not concretely classifiable.

\begin{theorem} \label{thm.main}
Let $\Gamma$ be an icc property (T) group with $[\Gamma,\Gamma]=\Gamma$. Let $K$ be a compact abelian group with countable dense subgroup $\Lambda$. As above, write $M = \rL^\infty(K^\Gamma) \rtimes (\Gamma \times \Lambda)$. Take closed subgroups $K_i < K$, $i=1,2$, such that $\Lambda \cap K_i$ is dense in $K_i$.
\begin{enumerate}
\item The following statements are equivalent.
\begin{itemize}
\item The Cartan subalgebras $\cC(K_1)$ and $\cC(K_2)$ are unitarily conjugate.
\item The subgroups $K_1, K_2 < K$ are commensurate.
\item $\Lambda K_1 = \Lambda K_2$.
\end{itemize}

\item The following statements are equivalent.
\begin{itemize}
\item The Cartan subalgebras $\cC(K_1)$ and $\cC(K_2)$ are conjugate by an automorphism of $M$.
\item The Cartan subalgebras $\cC(K_1)$ and $\cC(K_2)$ are stably conjugate by an automorphism.
\item There exists a continuous automorphism $\delta \in \Aut K$ such that $\delta(\Lambda) = \Lambda$ and $\delta(\Lambda K_1) = \Lambda K_2$.
\end{itemize}

\item The following statements are equivalent.
\begin{itemize}
\item The II$_1$ equivalence relations associated with $\cC(K_1)$ and $\cC(K_2)$ are orbit equivalent.
\item The II$_1$ equivalence relations associated with $\cC(K_1)$ and $\cC(K_2)$ are stably orbit equivalent.
\item There exists a continuous automorphism $\delta \in \Aut K$ such that $\delta(\Lambda K_1) = \Lambda K_2$.
\end{itemize}
\end{enumerate}
\end{theorem}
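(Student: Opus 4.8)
The plan is to make both equivalence relations completely explicit, and then to run the cyclic chain (third statement)$\Rightarrow$(first)$\Rightarrow$(second)$\Rightarrow$(third), concentrating all the rigidity in the last arrow.

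\emph{Making $\cR_i$ explicit.} I would first identify the II$_1$ equivalence relation $\cR_i$ carried by the Cartan subalgebra $\cC(K_i)\subset M$. Writing $\Lambda_i=\Lambda\cap K_i$ and choosing a measurable splitting $K^\Gamma\cong(K^\Gamma/K_i)\times K_i$, one has $\rL^\infty(K^\Gamma)\rtimes\Lambda_i\cong\rL^\infty(K^\Gamma/K_i)\ovt(\rL^\infty(K_i)\rtimes\Lambda_i)$. The decisive point is the Pontryagin duality flip $\rL^\infty(K_i)\rtimes\Lambda_i\cong\rL^\infty(\widehat{\Lambda_i})\rtimes\widehat{K_i}$, under which the group Cartan $\rL\Lambda_i$ becomes the space Cartan $\rL^\infty(\widehat{\Lambda_i})$, and $\widehat{K_i}$ (discrete) acts on $\widehat{\Lambda_i}$ by translation along the dense embedding $\widehat{K_i}\hookrightarrow\widehat{\Lambda_i}$ dual to $\Lambda_i\subset K_i$. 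Feeding in the remaining $\Gamma\times(\Lambda/\Lambda_i)$, together with the characters of $\widehat{K_i}$ sitting inside $\rL^\infty(K^\Gamma)$, exhibits the groupoid normalizing $\cC(K_i)$ as that of a group measure space Cartan, so that $\cR_i=\cR(G_i\actson Z_i)$ with
$$Z_i=(K^\Gamma/K_i)\times\widehat{\Lambda_i},\qquad G_i=\Gamma\times(\Lambda/\Lambda_i)\times\widehat{K_i},$$
where $\Gamma$ acts by the shift on $K^\Gamma/K_i$, $\Lambda/\Lambda_i$ by diagonal translation, and $\widehat{K_i}$ by the dual translation on $\widehat{\Lambda_i}$. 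In particular the combined ``translation lattice'' visible to $\cR_i$ is exactly $\Lambda K_i<K$.

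\emph{Easy implications.} The implication (first)$\Rightarrow$(second) is trivial ($\cU_i=Z_i$). For (third)$\Rightarrow$(first), given a continuous $\delta\in\Aut K$ with $\delta(\Lambda K_1)=\Lambda K_2$ I would build the orbit equivalence $Z_1\to Z_2$ by letting $\delta$ act coordinatewise on $K^\Gamma$ and dualizing on the $\widehat{\Lambda_i}$ factor. The point to stress is that only $\delta(\Lambda K_1)=\Lambda K_2$ is required, and not the stronger $\delta(\Lambda)=\Lambda$ of part (2): since $\Lambda$ and $K_i$ enter $\cR_i$ only through the combined lattice $\Lambda K_i$ (the compact direction $K_i$ having been traded for the amenable dual direction $\widehat{K_i}$), any discrepancy between $\delta(\Lambda)$ and $\Lambda$, or between $\delta(K_1)$ and $K_2$, is absorbed into these amenable directions without disturbing the relation. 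This is precisely the feature making orbit equivalence strictly weaker than conjugacy by an automorphism.

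\emph{Hard implication.} The substance is (second)$\Rightarrow$(third). A stable orbit equivalence of $\cR_1$ and $\cR_2$ yields, via Feldman--Moore, an isomorphism of an amplification of $\rL\cR_1$ onto $\rL\cR_2$ carrying Cartan onto Cartan, equivalently a Zimmer cocycle $G_1\times Z_1\to G_2$. I would restrict this cocycle to $\Gamma<G_1$, which acts on $K^\Gamma/K_i$ as an $s$-malleable, weakly mixing quotient of the Bernoulli action; since $\Gamma$ has property (T), Popa's cocycle superrigidity theorem applies and the restricted cocycle is cohomologous to a homomorphism $\Gamma\to G_2$. As $G_2=\Gamma\times(\Lambda/\Lambda_2)\times\widehat{K_2}$ has abelian second and third factors and $\Gamma=[\Gamma,\Gamma]$, this homomorphism lands in the $\Gamma$-factor; icc-ness then pins it down, up to inner perturbation, as an automorphism of $\Gamma$ conjugating the two shift actions.

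\emph{Main obstacle.} The hardest step is to upgrade the resulting measure-theoretic conjugacy of the Bernoulli $\Gamma$-actions on $K^\Gamma/K_1$ and $K^\Gamma/K_2$ to a \emph{continuous} automorphism $\delta\in\Aut K$ with $\delta(\Lambda K_1)=\Lambda K_2$. Concretely, one must show that the conjugacy respects the Fourier/algebraic structure of the compact base — sending characters to characters — so as to induce a genuine topological group automorphism of $K$, and then recognize the lattices $\Lambda K_i$ as orbit-equivalence invariants by tracking how the $\Lambda/\Lambda_i$- and $\widehat{K_i}$-directions are matched (again using that $\Gamma$ is icc and perfect to forbid the homomorphism from twisting by characters valued in the abelian factors). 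The analysis runs parallel to part (2), the only difference being that no factor-level datum now forces $\delta(\Lambda)=\Lambda$; retaining only $\delta(\Lambda K_1)=\Lambda K_2$ closes the cycle and accounts exactly for the gap between orbit equivalence and conjugacy by an automorphism.
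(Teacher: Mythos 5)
Your identification of the induced equivalence relations is correct and agrees with the paper's Lemma \ref{lemma.cartan}, and your sketch for part 3 follows the same general route as the paper. But the proposal proves, at best, part 3 of the theorem, and the two omissions are not cosmetic. Part 1 is never addressed: the paper proves it with Popa's unitary conjugacy criterion \cite{Po01} combined with the elementary commensurability Lemma \ref{lem.density}, and you in fact need it already inside your own ``easy implication'': a continuous $\delta$ with $\delta(\Lambda K_1)=\Lambda K_2$ need \emph{not} satisfy $\delta(K_1)=K_2$, so ``letting $\delta$ act coordinatewise'' does not descend to $K^\Gamma/K_1 \recht K^\Gamma/K_2$; one must first replace $K_1$ by the commensurate subgroup $\delta^{-1}(K_2)$ and know that commensurate admissible subgroups give unitarily conjugate Cartan subalgebras, hence isomorphic relations. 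Much more seriously, part 2 is missing entirely, and your closing claim that its analysis ``runs parallel'' to part 3 inverts the actual difficulty. As the paper's Example \ref{ex.cute} shows, parts 2 and 3 genuinely differ: the equivalence relation only remembers $\Lambda K_i$, while the inclusion $\cC(K_i)\subset M$ (equivalently, its $2$-cocycle) remembers $\Lambda$ itself. No equivalence-relation argument can force $\delta(\Lambda)=\Lambda$. The paper has to work at the von Neumann algebra level: after reducing to the relation picture it must untwist a $1$-cocycle with values in the \emph{non-abelian} algebra $\cU\bigl(p(\rL^\infty(X)\rtimes\Lambda)p\bigr)$, which requires the new non-commutative cocycle superrigidity theorem (Theorem \ref{thm.cocycle}, i.e.\ all of Section 3), followed by a Fourier-decomposition analysis of the unitaries $\al(u_s)$, $s\in\Lambda$, to conclude $\delta(\Lambda)\subset\Lambda$. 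This missing ingredient is the heart of the theorem.

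Within part 3 itself your ``hard implication'' is also too quick at its key step. The group $\Gamma$ acts on $Z_1=X/K_1\times\widehat{\Lambda_1}$ trivially in the second (hyperfinite) direction, so $\Gamma\actson Z_1$ is not ergodic, let alone weakly mixing, and Popa's cocycle superrigidity cannot be applied to ``the restricted cocycle'' as a single cocycle: what one obtains (via \cite{Po05} combined with \cite[Lemma 5.5]{PV08}) is a measurable \emph{family} of homomorphisms $\delta_y:\Gamma\recht G_2$ indexed by $y\in\widehat{\Lambda_1}$. The substance of the paper's Lemma \ref{lemma.reduction} is exactly to control how the stable orbit equivalence mixes the Bernoulli direction with the hyperfinite direction: one shows the $\delta_y$ are essentially independent of $y$ (using countability of $\Out\Gamma$ for the property (T) group $\Gamma$, weak mixing of the Bernoulli action, and ergodicity of $\widehat{K_1}\actson\widehat{\Lambda_1}$), that $\Ker\delta_y$ yields a compact subgroup $K_1'<\Lambda K_1$ commensurate with $K_1$ (which is why the statement of part 3 cannot fix $K_1$ itself), and that the fibered conjugacies $\Theta_y$ are genuine isomorphisms. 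The paper stresses this cannot be shortcut by a tensor-splitting theorem because $\rL^\infty(X/K_1)\rtimes(\Gamma\times\Lambda K_1/K_1)$ has property Gamma. Finally, the upgrade from a measure-theoretic conjugacy to a continuous $\delta\in\Aut K$ with $\delta(\Lambda K_1)=\Lambda K_2$ — which you correctly flag as an obstacle — is done in the paper via \cite[Lemma 5.2]{PV06} plus a weak mixing argument extending an isomorphism $K_1\recht K_2$ to $\Lambda K_1\recht\Lambda K_2$ and then, by closure in $\Aut(X)$, to all of $K$; your proposal names the goal but supplies no mechanism.
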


In Section \ref{sec.complete-analytic} we shall see that for any II$_1$ factor $M$ with separable predual, the set $\cartan(M)$ of its Cartan subalgebras is a standard Borel space and that moreover the equivalence relation of ``being unitarily conjugate'' is Borel, while the equivalence relation of ``being conjugate by an automorphism'' is analytic (see Proposition \ref{prop.borel}). As the following examples show, the first can be non concretely classifiable, while the second can be complete analytic, hence non Borel.

Whenever $\Lambda \subset K$ is a compact group with countable dense subgroup, we call $K_1 \subset K$ an \emph{admissible} subgroup if $K_1$ is closed and $K_1 \cap \Lambda$ is dense in $K_1$.

\begin{theorem}\label{thm.ex}
Let $\Gamma$ be any icc property (T) group with $[\Gamma,\Gamma]=\Gamma$, e.g.\ $\Gamma = \SL(3,\Z)$.
\begin{itemize}
\item Whenever $\cP$ is a set of distinct prime numbers, denote $K := \prod_{p \in \cP} \Z/p\Z$ and consider the dense subgroup $\Lambda := \bigoplus_{p \in \cP} \Z/p\Z$. For every subset $\cP_1 \subset \cP$ the subgroup of $K$ given by $K(\cP_1) := \prod_{p \in \cP_1} \Z / p \Z$ is admissible and gives rise to a Cartan subalgebra $\cC(\cP_1)$ of $M := \rL^\infty(K^\Gamma) \rtimes (\Gamma \times \Lambda)$. Two such Cartan subalgebras $\cC(\cP_1)$ and $\cC(\cP_2)$ or their associated equivalence relations are conjugate in any of the possible senses if and only if the symmetric difference $\cP_1 \bigtriangleup \cP_2$ is finite.

    In particular the equivalence relations on $\cartan(M)$ given by ``being unitarily conjugate'' or ``being conjugate by a (stable) automorphism'' are not concretely classifiable.

\item Denote by $\Lambda := \Q^{(\infty)}$ the countably infinite direct sum of copies of the additive group $\Q$. Then $\Lambda$ admits a second countable compactification $K$ such that for the associated II$_1$ factor $M := \rL^\infty(K^\Gamma) \rtimes (\Gamma \times \Lambda)$ the equivalence relation of ``being conjugate by an automorphism of $M$'' is a complete analytic equivalence relation on $\cartan(M)$.
\end{itemize}
\end{theorem}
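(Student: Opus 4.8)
The plan is to derive both bullets from Theorem \ref{thm.main}, which converts each notion of conjugacy of the Cartan subalgebras $\cC(K_i)$ into a purely group-theoretic statement about the subgroups $\Lambda K_i < K$ and the continuous automorphisms of $K$ preserving $\Lambda$. Thus in each case the work splits into (i) computing the groups $\Lambda K_i$ together with the continuous automorphism group $\Aut K$, and (ii) recognizing the resulting equivalence relation on the indexing data as a known relation from descriptive set theory. In both bullets one also has to check that $K_1 \mapsto \cC(K_1)$ is a Borel map into $\cartan(M)$, using the explicit formula for $\cC(K_1)$ and the standard Borel structure on $\cartan(M)$ from Section \ref{sec.complete-analytic}; I regard this as routine. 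Admissibility of the subgroups below is immediate, and that the $\cC(K_i)$ are Cartan follows from Lemma \ref{lemma.cartan}.

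For the first bullet, with $K = \prod_{p\in\cP}\Z/p\Z$ and $\Lambda = \bigoplus_{p\in\cP}\Z/p\Z$, I would first compute $\Lambda\,K(\cP_1) = \{x\in K : x_p=0 \text{ for all but finitely many } p\in\cP\setminus\cP_1\}$, so that $\Lambda K(\cP_1)=\Lambda K(\cP_2)$ holds exactly when $\cP_1\bigtriangleup\cP_2$ is finite. Since the primes are distinct, $\operatorname{Hom}(\Z/p\Z,\Z/q\Z)=0$ for $p\neq q$, and dualizing (continuous automorphisms of $K$ correspond to automorphisms of the discrete group $\widehat K=\bigoplus_p\Z/p\Z$) shows that every continuous automorphism of $K$ is coordinatewise, i.e.\ $\Aut K\cong\prod_{p\in\cP}(\Z/p\Z)^\times$. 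Such a $\delta$ preserves all supports, hence $\delta(\Lambda)=\Lambda$ and $\delta(\Lambda K(\cP_i))=\Lambda K(\cP_i)$. Consequently all three criteria of Theorem \ref{thm.main} collapse to the single condition $\Lambda K(\cP_1)=\Lambda K(\cP_2)$, that is, to $\cP_1\bigtriangleup\cP_2$ being finite, proving the equivalence ``in any of the possible senses'' simultaneously. Finally, writing $\cP\cong\N$, the relation ``$\cP_1\bigtriangleup\cP_2$ finite'' on $2^{\cP}$ is Borel isomorphic to $E_0$; since $E_0$ is not smooth, the Borel map $\cP_1\mapsto\cC(\cP_1)$ is a reduction of $E_0$ into each of the conjugacy relations on $\cartan(M)$, so none of them is concretely classifiable.

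For the second bullet the target is a complete analytic equivalence relation, for which I would take the isomorphism relation $\cong$ on the standard Borel space $\trees$ of countable trees, known to be analytic and $\Sigma^1_1$-complete as a subset of $\trees\times\trees$. By Proposition \ref{prop.borel} the relation ``conjugate by an automorphism of $M$'' is already analytic, so it suffices to produce a Borel reduction of $\cong$ into it. Using Theorem \ref{thm.main}(2), this amounts to choosing a second countable compactification $K$ of $\Lambda=\Q^{(\infty)}$ together with a Borel assignment $T\mapsto K_T$ of admissible subgroups such that, for all trees $T_1,T_2$, one has $T_1\cong T_2$ if and only if there is a continuous $\delta\in\Aut K$ with $\delta(\Lambda)=\Lambda$ and $\delta(\Lambda K_{T_1})=\Lambda K_{T_2}$. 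The induced map $(T_1,T_2)\mapsto(\cC(K_{T_1}),\cC(K_{T_2}))$ is then the desired reduction, establishing $\Sigma^1_1$-hardness and hence, together with analyticity, complete analyticity.

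The main obstacle is the construction of $K$ and of the encoding $T\mapsto K_T$ that makes this biconditional exact. The idea is to build the compactification rigidly enough that the continuous automorphisms of $K$ preserving $\Lambda$ are forced to come from combinatorial ``relabelings'' of the building blocks out of which $\Lambda=\Q^{(\infty)}$ and the $K_T$ are assembled, so that the existence of such a $\delta$ carrying $\Lambda K_{T_1}$ onto $\Lambda K_{T_2}$ becomes equivalent to the existence of a tree isomorphism $T_1\to T_2$. Concretely, I would index the copies of $\Q$ in $\Lambda$ by the nodes of a fixed universal tree and choose the dual group $\widehat K$ (a countable, point-separating subgroup of $\widehat\Lambda$) so that each admissible $K_T$ records the edge relation of $T$, while no exotic continuous automorphism survives beyond those induced by structure-preserving node permutations. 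The heart of the matter is the two-sided control of $\Aut K$: enough $\Lambda$-preserving automorphisms must exist to realize every tree isomorphism, and no spurious ones may exist that would identify non-isomorphic trees. Once this rigidity is in place, the reduction and the Borel measurability of $T\mapsto\cC(K_T)$ follow along the lines of the first bullet.
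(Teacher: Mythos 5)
Your first bullet is correct and follows essentially the same route as the paper: Theorem \ref{thm.main} plus the observations that $\Lambda K(\cP_1)=\Lambda K(\cP_2)$ iff $\cP_1\bigtriangleup\cP_2$ is finite and that every continuous automorphism of $K=\prod_{p\in\cP}\Z/p\Z$ preserves each summand (the paper argues via orders of elements, you via Pontryagin duality and $\operatorname{Hom}(\Z/p\Z,\Z/q\Z)=0$; these are interchangeable), after which all conjugacy notions collapse to $E_0$ and non-smoothness gives non-classifiability.

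For the second bullet, however, there is a genuine gap. Your plan requires an \emph{exact} biconditional: a Borel assignment $T\mapsto K_T$ such that $T_1\cong T_2$ if and only if some continuous $\delta\in\Aut K$ with $\delta(\Lambda)=\Lambda$ satisfies $\delta(\Lambda K_{T_1})=\Lambda K_{T_2}$. You correctly identify this as ``the heart of the matter'' but then only express the hope that a sufficiently rigid compactification exists; no construction is given, and the hard direction (conjugacy of the Cartan subalgebras implies isomorphism of the trees) is precisely what the available machinery does \emph{not} deliver. Concretely: by Theorem \ref{thm.main}(2) a conjugacy only yields $\delta(\Lambda\overline{G(T_1)})=\Lambda\overline{G(T_2)}$, which by Lemma \ref{lem.density} gives merely \emph{commensurability} of $\delta(G(T_1))$ with $G(T_2)$; moreover $\delta$ restricted to $\Lambda=\Q^{(\infty)}$ is an arbitrary group automorphism, with no reason to permute the coordinates indexed by nodes. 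Hjorth's coding \cite{Hj01} was never designed to satisfy ``$G(T_1)\cong G(T_2)\Rightarrow T_1\cong T_2$'', let alone its commensurability-robust strengthening; its only invariant feature is that the isomorphism (indeed commensurability) class of $G(T)$ remembers whether $T$ has an infinite path, via a divisibility criterion.

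This is why the paper takes a different, weaker-hypothesis route, which you should adopt: reduce a complete analytic \emph{set} rather than the isomorphism \emph{relation}. By \cite[Lemma 3.1]{DM07} there are a complete analytic set $X\subset\R$ and Borel maps $F_1,F_2:\R\recht\trees$ with $F_1(x)\cong F_2(x)$ when $x\in X$, while for $x\notin X$ the tree $F_1(x)$ has an infinite path and $F_2(x)$ does not. Then one only needs two one-sided statements: (i) isomorphic trees (of even sequences) give Cartan subalgebras conjugate by an automorphism --- here a tree isomorphism extends to an automorphism of $\Lambda$ and, thanks to the choice of compactification in Lemma \ref{lem.compactQ}, to a continuous automorphism of $K$, so Theorem \ref{thm.main}(2) applies; and (ii) conjugacy preserves the existence of an infinite path --- because the resulting $\delta$ makes $\delta(G(S))$ commensurate with $G(T)$, and the divisibility conditions witnessing an infinite path survive multiplication by the finite index $N$ (\cite[Lemma 2.2]{DM07}). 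Then $h(x)=(\cC(F_1(x)),\cC(F_2(x)))$ satisfies $h^{-1}(\cR)=X$, so $\cR$ is complete analytic. Note finally that even your starting premise --- that tree isomorphism is $\Sigma^1_1$-complete as a subset of $\trees\times\trees$ --- is itself proved by exactly this asymmetric-pair device together with the fact that isomorphisms preserve infinite paths; once one has that device, one can and should bypass the exact biconditional altogether.
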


In Theorem \ref{thm.main} the second set of statements is, at least formally, stronger than the third set of statements. Example \ref{ex.cute} below shows that this difference really occurs. So in certain cases the Cartan subalgebras $\cC(K_1)$ and $\cC(K_2)$ give rise to isomorphic equivalence relations, but are nevertheless non conjugate by an automorphism of $M$. The reason for this is the following~: the Cartan inclusions $\cC(K_i) \subset M$ come with $2$-cocycles $\Om_i$~; although the associated equivalence relations are isomorphic, this isomorphism does not map $\Omega_1$ onto $\Omega_2$.

We finally observe that if $K_1 < K$ is an infinite subgroup, the Cartan subalgebra $\cC(K_1) \subset M$ is never of group measure space type. The induced equivalence relation is generated by a free action of a countable group (see Lemma \ref{lemma.cartan}), but the $2$-cocycle is non trivial (see Remark \ref{rem.McDuff}).

\begin{example} \label{ex.cute}
Choose elements $\al,\beta,\gamma \in \T$ that are rationally independent (i.e.\ generate a copy of $\Z^3 \subset \T$). Consider $K = \T^3$ and define the countable dense subgroup $\Lambda < K$ generated by $(\al,1,\al)$, $(1,\al,\beta)$ and $(1,1,\gamma)$. Put $K_1 := \T \times \{1\} \times \T$ and $K_2 := \{1\} \times \T \times \T$. Then, $\Lambda \cap K_1$ is generated by $(\al,1,\al)$, $(1,1,\gamma)$ and hence dense in $K_1$. Similarly $\Lambda \cap K_2$ is dense in $K_2$.

We have $\Lambda K_1 = \T \times \al^\Z \times \T$ and $\Lambda K_2 = \al^\Z \times \T \times \T$. Hence the automorphism $\delta(x,y,z) = (y,x,z)$ of $\T^3$ maps $\Lambda K_1$ onto $\Lambda K_2$. It is however easy to check that there exists no automorphism $\delta$ of $K$ such that $\delta(\Lambda K_1) = \Lambda K_2$ and $\delta(\Lambda) = \Lambda$.

So, by Theorem \ref{thm.main}, the Cartan subalgebras $\cC(K_1)$ and $\cC(K_2)$ of $M$ give rise to orbit equivalent relations, but are not conjugate by a (stable) automorphism of $M$.
\end{example}

In the final section of the paper we give an example of a group $G$ that admits uncountably many non stably orbit equivalent actions that all give rise to the same II$_1$ factor. The precise statement goes as follows and will be an immediate consequence of the more concrete, but involved, Proposition \ref{prop.concrete}.

\begin{theorem}\label{thm.gmsc}
There exists a group $G$ that admits uncountably many free ergodic p.m.p.\ actions $G \actson (X_i,\mu_i)$ that are non stably orbit equivalent but with all the II$_1$ factors $\rL^\infty(X_i) \rtimes G$ being isomorphic with the same II$_1$ factor $M$. So this II$_1$ factor $M$ admits uncountably many group measure space Cartan subalgebras that are non conjugate by a (stable) automorphism of $M$.
\end{theorem}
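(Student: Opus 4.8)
The plan is to deduce Theorem \ref{thm.gmsc} from a concrete construction (this is Proposition \ref{prop.concrete}) producing a single II$_1$ factor $M$ together with an uncountable family of group measure space Cartan subalgebras $B_i \subset M$, all arising from actions of one fixed countable group $G$, whose associated equivalence relations are pairwise non stably orbit equivalent. Granting such $M$ and $B_i$, the theorem is immediate: writing $B_i = \rL^\infty(X_i)$ and $M = B_i \rtimes G$, the $G \actson (X_i,\mu_i)$ are free ergodic p.m.p.\ actions with isomorphic crossed products, and two such actions are stably orbit equivalent exactly when the Cartan inclusions $B_i \subset M$ are conjugate by a stable automorphism of $M$. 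Indeed, the $2$-cocycles being trivial, a stable conjugacy of the group measure space Cartan pairs is the same datum as a stable orbit equivalence of the relations, so non stable orbit equivalence of the actions is precisely non conjugacy by a stable automorphism of $M$. Everything thus reduces to the concrete construction.

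The key difficulty, and the reason the main construction does not apply directly, is the triviality of the $2$-cocycle. Within the family $M = \rL^\infty(K^\Gamma) \rtimes (\Gamma \times \Lambda)$ we already have, by Theorem \ref{thm.ex}, uncountably many Cartan subalgebras $\cC(K_i)$ of a single $M$ whose relations are pairwise non stably orbit equivalent; what fails is that for infinite $K_i$ these are never of group measure space type, since the inclusion $\cC(K_i) \subset M$ carries a genuinely nontrivial $2$-cocycle $\Om_i$ (Remark \ref{rem.McDuff}). Concretely, the normalizer of $\cC(K_i) = \rL^\infty(K^\Gamma/K_i) \ovt \rL\Lambda_i$ contains both the diagonal translations $v_\lambda$ and the characters of $K^\Gamma$, and these two families commute only up to the nondegenerate $\T$-valued bicharacter pairing $\widehat{K_i} \times \Lambda_i \to \T$; this bicharacter is essentially $\Om_i$, and being nondegenerate it is not a coboundary. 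So the whole task is to reproduce the non orbit equivalence classification of Theorem \ref{thm.main}(3) in a setting where the analogous cocycle has been trivialised.

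I would therefore modify the construction so as to absorb this bicharacter into the acting group itself. One natural route is to replace the abelian translating group $\Lambda$ by a suitable central (Heisenberg-type) extension, with compact completion $K$ no longer abelian, chosen so that the commutator scalars $\widehat{K_i} \times \Lambda_i \to \T$ become genuine commutators inside $G$. Then the twist is internal to the group von Neumann algebra, and the corresponding Cartan subalgebra becomes an honest group measure space Cartan $\rL^\infty(X_i) = B_i$ with acting group a fixed $G$ (incorporating $\Gamma$ and the extension); the role of the parameter $i$ is again played by an uncountable family of closed subgroups lying in distinct continuous-automorphism orbits. One then re-runs the deformation/rigidity analysis underlying Theorem \ref{thm.main}: property (T) together with $[\Gamma,\Gamma]=\Gamma$, and Popa's conjugacy criterion for Cartan subalgebras, pin down the normalizers and reduce stable orbit equivalence of the $B_i$ to the existence of a continuous automorphism of $K$ matching the relevant subgroups, so that choosing uncountably many subgroups in distinct orbits yields pairwise non stably orbit equivalent relations while all the $B_i \rtimes G$ remain canonically identified with the single factor $M$.

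The main obstacle is precisely this last point: simultaneously (i) arranging the group-theoretic data so that the $2$-cocycle of every $B_i \subset M$ is a coboundary, i.e.\ that each $B_i$ is genuinely of group measure space type for one common group $G$, and (ii) proving, by a deformation/rigidity computation parallel to but heavier than that of Theorem \ref{thm.main}, that the resulting relations are pairwise non stably orbit equivalent. Step (ii) is where the real work lies: the passage to a nonabelian completion $K$ removes the clean dual description available in the abelian case, so the normalizer and intertwining analysis must be carried out by hand. This is the content of the more involved Proposition \ref{prop.concrete}.
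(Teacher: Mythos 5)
Your formal reduction is fine and coincides with the paper's: Theorem \ref{thm.gmsc} is indeed stated as an immediate consequence of the concrete Proposition \ref{prop.concrete}, using that for group measure space Cartan subalgebras (trivial $2$-cocycle) stable conjugacy of the Cartan pairs is the same as stable orbit equivalence of the actions. But everything after that reduction is a research programme, not a proof, and it is not the route the paper takes. You never specify the Heisenberg-type extension, its compact completion, or the uncountable family of subgroups, and you prove neither of the two things that must be proven: (i) that the modified construction really yields group measure space Cartan subalgebras over one fixed group $G$ acting freely, with all crossed products isomorphic; and (ii) that the associated relations are pairwise non stably orbit equivalent. For (i) there is a concrete obstruction you do not address: if you make the pairing $\widehat{K_1} \times \Lambda_1 \recht \T$ inner by passing to a central extension of the acting group, the new center acts trivially on the probability space, so the action is no longer essentially free and the algebra it generates together with the Cartan is again a twisted crossed product; restoring freeness forces you to enlarge the space, which changes the Cartan, and none of this is worked out. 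For (ii), ``re-running'' Lemma \ref{lemma.reduction} is not routine: its proof uses at every step that $K$, $\Lambda$ and the auxiliary groups $H_i$ are abelian and that $\Lambda K_2/K_2$ is the center of $G_2/K_2$; a Heisenberg-type $K$ destroys exactly these inputs.

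The paper sidesteps all of this by not attempting to repair the $\cC(K_1)$ construction at all. It starts from Popa's example (\cite[Section 6.1]{Po06b}) of two actions of the single group $H = \SL(3,\Z) \times H_0^{(\N)}$, namely $H \actson X_1 = (H_0^{\N})^{\SL(3,\Z)}$ and $H \actson X_2 = X_1/H_0$, which are W$^*$-equivalent --- because the II$_1$ factor has fundamental group $\R_+$ while the orbit equivalence relation has trivial fundamental group --- but not orbit equivalent. It then amplifies: with $\Gamma = \PSL(4,\Q(\sqrt{n} \mid n \in \N))$, amenable subgroups $\Gamma_n$, $I = \bigsqcup_n \Gamma/\Gamma_n$ and $G = H \wr_I \Gamma$, every subset $\cF \subset \N$ gives an action $G \actson X_\cF$ assembled from copies of $X_1$ (for $n \in \cF$) and $X_2$ (for $n \notin \cF$). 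All crossed products are canonically isomorphic to $\bigl(\ovt_{i \in I} M\bigr) \rtimes \Gamma$, so W$^*$-equivalence is automatic and no cocycle ever needs to be trivialized. The real work, non stable orbit equivalence for $\cF \neq \cF'$, is then done not by re-running Theorem \ref{thm.main} but by Popa's cocycle superrigidity combined with a wreath-product rigidity lemma for relative property (T) subgroups (Lemma \ref{lemma.intertwine}); these force any stable orbit equivalence to decompose along the index set $I$, producing an orbit equivalence between $H \actson X_1$ and $H \actson X_2$ at some index, a contradiction. Your proposal would need an entirely new argument of comparable depth to fill the corresponding gap.
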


\section{Proof of the main Theorem \ref{thm.main}}

We first prove the following elementary lemma that establishes the essential freeness of certain actions on compact groups.

\begin{lemma}\label{lemma.free}
Let $Y$ be a compact group and $\Lambda < Y$ a countable subgroup. Assume that a countable group $\Gamma$ acts on $Y$ by continuous group automorphisms $(\al_g)_{g \in \Gamma}$ preserving $\Lambda$. Assume that for $g \neq e$ the subgroup $\{y \in Y \mid \al_g(y) = y\}$ has infinite index in $Y$. Equip $Y$ with its Haar measure.

Then the action of the semidirect product $\Lambda \rtimes \Gamma$ on $Y$ given by $(\lambda,g) \cdot y = \lambda \al_g(y)$ is essentially free.
\end{lemma}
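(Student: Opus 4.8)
The plan is to verify essential freeness straight from its definition. Since $\Lambda \rtimes \Gamma$ is countable, the action is essentially free precisely when, for every non-trivial element $(\lambda,g)$, the fixed-point set
$$F_{(\lambda,g)} := \{y \in Y \mid \lambda \al_g(y) = y\}$$
is Haar-null: the set of points with non-trivial stabilizer is then the countable union $\bigcup_{(\lambda,g) \neq (e,e)} F_{(\lambda,g)}$ and hence also null. So the whole lemma reduces to showing that each $F_{(\lambda,g)}$ is negligible, and I would treat the two cases $g=e$ and $g\neq e$ separately.

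The case $g = e$ is immediate: if $\lambda \neq e$, then $\lambda \al_e(y) = \lambda y = y$ forces $\lambda = e$, a contradiction, so $F_{(\lambda,e)} = \emptyset$.

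For the main case $g \neq e$, I would first observe that $F_{(\lambda,g)}$, when non-empty, is a single left coset of the fixed-point subgroup $H_g := \{y \in Y \mid \al_g(y) = y\}$. Indeed, if $y_1,y_2 \in F_{(\lambda,g)}$, then $\lambda = y_1\al_g(y_1)^{-1} = y_2\al_g(y_2)^{-1}$, so that, setting $z := y_2^{-1} y_1$, one gets $\al_g(z) = z$, i.e.\ $z \in H_g$ and $y_1 \in y_2 H_g$; conversely, for $y_2 \in F_{(\lambda,g)}$ and $z \in H_g$ a short computation using $\al_g(y_2 z) = \al_g(y_2)z$ shows $y_2 z \in F_{(\lambda,g)}$, so $F_{(\lambda,g)} = y_2 H_g$. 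Since $\al_g$ is continuous, $H_g$ is a closed (hence Borel) subgroup of $Y$.

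It then remains to invoke the one genuinely measure-theoretic ingredient: a closed subgroup $H$ of infinite index in a compact group $Y$ is Haar-null. This follows because the left cosets of $H$ are pairwise disjoint and, by left-invariance of the Haar measure $m$, all have measure $m(H)$; infinite index provides, for each $n$, at least $n$ disjoint cosets, whence $n\,m(H) \le 1$ and so $m(H) = 0$. The standing hypothesis says exactly that $H_g$ has infinite index for $g \neq e$, so $m(H_g) = 0$ and therefore $m(F_{(\lambda,g)}) = m(y_2 H_g) = 0$, completing the argument. I do not anticipate a serious obstacle; the only points requiring a little care are keeping the computations correct in the possibly non-abelian group $Y$ (so that $F_{(\lambda,g)}$ comes out as a genuine coset of $H_g$) and the routine remark that closed subgroups are measurable.
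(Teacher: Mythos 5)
Your proof is correct and follows essentially the same route as the paper's: the fixed-point set of $(\lambda,g)$ with $g \neq e$ is a coset of the closed subgroup $\{y \in Y \mid \al_g(y) = y\}$, which is null by the infinite-index hypothesis, and the case $g = e$ is trivial. You merely spell out the details (the countable-union reduction, the coset computation in a possibly non-abelian $Y$, and why an infinite-index closed subgroup has Haar measure zero) that the paper leaves implicit.
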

\begin{proof}
Take $(\lambda,g) \neq e$ and consider the set of $y$ with $(\lambda,g) \cdot y = y$. If this set is non empty, it is a coset of $\{y \in Y \mid \al_g(y) = y\}$. If $g\neq e$, the latter is a closed subgroup of $Y$ with infinite index and hence has measure zero. So we may assume that $g = e$ and $\lambda \neq e$, which is the trivial case.
\end{proof}

Take a countable group $\Gamma$ and a compact abelian group $K$ with dense countable subgroup $\Lambda < K$. Put $X := K^\Gamma$ and consider the action $\Gamma \times \Lambda \actson X$ given by $((g,s)\cdot x)_h = s x_{hg}$ for all $g,h \in \Gamma$, $s \in \Lambda$, $x \in X$. Write $M = \rL^\infty(X) \rtimes (\Gamma \times \Lambda)$.

Assume that $K_1 < K$ is a closed subgroup. We view $\rL^\infty(X/K_1)$ as the subalgebra of $K_1$-invariant functions in $\rL^\infty(X)$. We put $\Lambda_1 := \Lambda \cap K_1$ and define $\cC(K_1)$ as the von Neumann subalgebra of $M$ generated by $\rL^\infty(X/K_1)$ and $\rL(\Lambda_1)$. So,
$$\cC(K_1) = \rL^\infty(X/K_1) \rtimes \Lambda_1 \subset M \; .$$
Note that we can identify $\cC(K_1) = \rL^\infty(X/K_1) \ovt \rL \Lambda_1 = \rL^\infty(X/K_1 \times \widehat{\Lambda_1})$.

\begin{lemma}\label{lemma.cartan}
With the above notations, $\cC(K_1)' \cap M = \cC(\overline{\Lambda_1})$. So $\cC(K_1) \subset M$ is maximal abelian iff $\Lambda \cap K_1$ is dense in $K_1$.

In that case, $\cC(K_1)$ is a Cartan subalgebra of $M$ and the induced equivalence relation on $X/K_1 \times \widehat{\Lambda_1}$ is given by the orbits of the product action of $(\Gamma \times \Lambda K_1/K_1) \times \widehat{K_1}$ on $X/K_1 \times \widehat{\Lambda_1}$.
\end{lemma}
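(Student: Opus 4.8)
The plan is to treat the three assertions in turn, obtaining the relative commutant first and reading off the rest. Write a general element $m\in M$ as a Fourier series $m=\sum_{(g,s)\in\Gamma\times\Lambda} a_{(g,s)}\, u_{(g,s)}$ with $a_{(g,s)}\in\rL^\infty(X)$. The inclusion $\cC(\overline{\Lambda_1})\subseteq \cC(K_1)'\cap M$ is immediate: since $\overline{\Lambda_1}\subseteq K_1$ we have $\rL^\infty(X/K_1)\subseteq\rL^\infty(X/\overline{\Lambda_1})$ and $\Lambda\cap\overline{\Lambda_1}=\Lambda_1$, so $\cC(K_1)\subseteq\cC(\overline{\Lambda_1})$, while $\cC(\overline{\Lambda_1})=\rL^\infty(X/\overline{\Lambda_1})\ovt\rL\Lambda_1$ is abelian because $\Lambda_1$ acts trivially on $X/\overline{\Lambda_1}$. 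For the reverse inclusion I first impose that $m$ commute with $\rL^\infty(X/K_1)$: since $\rL^\infty(X)$ is abelian this forces each $a_{(g,s)}$ to be supported on $\{x : (g,s)^{-1}x\in K_1\cdot x\}$. For $g=e$ this set is all of $X$ when $s\in\Lambda_1$ and null otherwise; for $g\neq e$ it is null. Imposing next that $m$ commute with $\rL\Lambda_1$ forces each surviving $a_{(e,s)}$ to be invariant under translation by $\Lambda_1$, hence by $\overline{\Lambda_1}$, i.e. $a_{(e,s)}\in\rL^\infty(X/\overline{\Lambda_1})$. Thus $m\in\cC(\overline{\Lambda_1})$.

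The one nontrivial input here is that for $g\neq e$ the set $\{x:(g,s)^{-1}x\in K_1\cdot x\}$ is null. This is exactly the essential freeness of the action on the compact quotient group $Y:=X/K_1$. The Bernoulli shift descends to automorphisms $\al_g$ of $Y$ (it commutes with the diagonal $K_1$), and $\Lambda$ descends to translation by the subgroup $\Lambda K_1/K_1<Y$. To apply Lemma \ref{lemma.free} I must check that $\{y\in Y:\al_g(y)=y\}$ has infinite index for $g\neq e$. I would verify this by Pontryagin duality: the index equals $|\operatorname{Im}(\al_g^\ast-1)|$ computed on $\widehat Y=\{(\eta_h)\in\bigoplus_\Gamma\widehat K:\sum_h\eta_h|_{K_1}=0\}$, and since $\al_g^\ast$ is the dual shift by $g\neq e$, the operator $\al_g^\ast-1$ has infinite image as soon as $K$ is nontrivial (apply it to the characters $\psi(\delta_h-\delta_{hg})\in\widehat Y$, $\psi\in\widehat K\setminus\{0\}$, and vary $h$). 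This is the step I expect to be the main obstacle, since it is where the infinitude of $\Gamma$ and the structure of the Bernoulli action really enter; everything else is bookkeeping.

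The second assertion is then immediate: $\cC(K_1)$ is maximal abelian iff $\cC(K_1)=\cC(\overline{\Lambda_1})$, i.e. iff $\rL^\infty(X/K_1)=\rL^\infty(X/\overline{\Lambda_1})$, i.e. iff $\overline{\Lambda_1}=K_1$. Assume from now on that $\Lambda_1$ is dense in $K_1$. To see that $\cC(K_1)$ is regular, hence Cartan, I exhibit normalizing unitaries generating $M$: the unitaries $u_{(g,e)}$ ($g\in\Gamma$) and $u_{(e,s)}$ ($s\in\Lambda$), together with, for each $\chi\in\widehat{K_1}$, the unitary $w_\chi\in\rL^\infty(X)$ given by $w_\chi(x)=\tilde\chi(x_e)$ for some extension $\tilde\chi\in\widehat K$ of $\chi$. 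A direct computation shows each normalizes $\cC(K_1)$ (the only point to check is $w_\chi u_{(e,s)}w_\chi^\ast=\chi(s)u_{(e,s)}$ for $s\in\Lambda_1$). They generate $M$: conjugating $w_\chi$ by the shifts yields all single-coordinate characters $x\mapsto\tilde\chi(x_h)$, and multiplying these against $\rL^\infty(X/K_1)$ recovers every character of $X=K^\Gamma$, hence all of $\rL^\infty(X)$; adjoining $u_{(g,e)}$ and $u_{(e,s)}$ gives $M$.

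Finally I read off the induced equivalence relation on $Z:=X/K_1\times\widehat{\Lambda_1}$ (using $\cC(K_1)=\rL^\infty(X/K_1)\ovt\rL\Lambda_1=\rL^\infty(Z)$) by computing the transformation of $Z$ implemented by each generator through $\Ad$: $u_{(g,e)}$ acts by the Bernoulli shift $\al_g$ on the $X/K_1$ coordinate and trivially on $\widehat{\Lambda_1}$; $u_{(e,s)}$ acts by translation on $X/K_1$, factoring through $\Lambda K_1/K_1$ since $\Lambda_1$ acts trivially, and trivially on $\widehat{\Lambda_1}$; and $w_\chi$ acts trivially on $X/K_1$ and by translation by $\chi|_{\Lambda_1}$ on $\widehat{\Lambda_1}$, where $\widehat{K_1}\hookrightarrow\widehat{\Lambda_1}$ densely. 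These transformations pairwise commute, so together they give precisely the product action of $(\Gamma\times\Lambda K_1/K_1)\times\widehat{K_1}$ on $X/K_1\times\widehat{\Lambda_1}$. Since the normalizers generate $M$, the Feldman--Moore correspondence \cite{FM75} identifies the induced relation with the orbit relation of this action; its essential freeness follows once more from Lemma \ref{lemma.free} (for the $X/K_1$ factor) together with the density of $\Lambda_1$ in $K_1$ (for the $\widehat{\Lambda_1}$ factor).
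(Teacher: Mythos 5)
Your proof is correct and follows essentially the same route as the paper's: compute the relative commutant via the Fourier decomposition, reduce the key nullity statement to essential freeness of the shift-plus-translation action on $X/K_1$ via Lemma \ref{lemma.free}, exhibit normalizing unitaries built from single-coordinate characters together with the group unitaries $u_g$, $u_s$, and read off the product form of the induced relation coordinate by coordinate. The only slip is in your duality certificate for the infinite-index hypothesis of Lemma \ref{lemma.free}: if $g^2=e$ and $\widehat{K}$ is $2$-torsion, then $(\al_g^*-1)\bigl(\psi(\delta_h-\delta_{hg})\bigr)=2\psi(\delta_{hg}-\delta_h)=0$, so these characters witness nothing; use instead $\psi(\delta_h-\delta_{h'})$ with $h'\notin\{h,hg,hg^{-1}\}$ varying, whose images $\psi(\delta_{hg}-\delta_{h'g}-\delta_h+\delta_{h'})$ are nonzero and pairwise distinct, which repairs the verification in all cases.
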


For arbitrary closed subgroups $K_1 < K$, the intersection $\Lambda \cap K_1$ need not be dense in $K_1$. But putting $K_2 = \overline{\Lambda \cap K_1}$, we always have that $\Lambda \cap K_2$ is dense in $K_2$.

\begin{proof}
By writing the Fourier decomposition of an element in $M = \rL^\infty(X) \rtimes (\Gamma \times \Lambda)$, one easily checks that
\begin{equation}\label{eq.firstpart}
(\rL \Lambda_1)' \cap M = \rL^\infty(X/\overline{\Lambda_1}) \rtimes (\Gamma \times \Lambda) \; .
\end{equation}
View $X = K^\Gamma$ as a compact group and view $K_1$ as a closed subgroup of $X$ sitting in $X$ diagonally. Put $Y = X/K_1$ and view $\Lambda/\Lambda_1$ as a subgroup of $Y$ sitting in $Y$ diagonally. By Lemma \ref{lemma.free} the action $\Gamma \times \Lambda/\Lambda_1 \actson Y$ is essentially free. From this we conclude that
$$\rL^\infty(X/K_1)' \cap M = \rL^\infty(X) \rtimes \Lambda_1 \; .$$
In combination with \eqref{eq.firstpart} we get that $\cC(K_1)' \cap M = \cC(\overline{\Lambda_1})$.
This proves the first part of the lemma.

Assume now that $\Lambda \cap K_1$ is dense in $K_1$. We know that $\cC(K_1) \subset M$ is maximal abelian. For every $\om \in \widehat{K}$ we define the unitary $U_\om \in \rL^\infty(X)$ given by $U_\om(x) = \om(x_e)$. One checks that $\cC(K_1)$ is normalized by all unitaries in $\cC(K_1)$, $\{U_\om \mid \om \in \widehat{K}\}$, $\{u_g \mid g \in \Gamma\}$ and $\{u_s \mid s \in \Lambda\}$. These unitaries generate $M$ so that $\cC(K_1)$ is indeed a Cartan subalgebra of $M$.

It remains to analyze which automorphisms of $\cC(K_1) = \rL^\infty(X/K_1 \times \widehat{\Lambda_1})$ are induced by the above normalizing unitaries. The automorphism $\Ad U_\om$ only acts on $\widehat{\Lambda_1}$ by first restricting $\om$ to a character on $\Lambda_1$ and then translating by this character on $\widehat{\Lambda_1}$. The automorphism $\Ad u_g$, $g \in \Gamma$, only acts on $X/K_1$ by the quotient of the Bernoulli shift. Finally the automorphism $\Ad u_s$, $s \in \Lambda$, also acts only on $X/K_1$ by first projecting $s$ onto $\Lambda/\Lambda_1 = \Lambda K_1 / K_1$ and then diagonally translating with this element. The resulting orbit equivalence relation indeed corresponds to the direct product of $\cR(\Gamma \times \Lambda K_1/K_1 \actson X/K_1)$ and $\cR(\widehat{K_1} \actson \widehat{\Lambda_1})$.
\end{proof}

\begin{remark}\label{rem.McDuff}
Lemma \ref{lemma.cartan} says that the equivalence relation induced by $\cC(K_1) \subset M$ is the direct product of the orbit relation $\cR((\Gamma \times \Lambda K_1/K_1) \actson X/K_1)$ and the hyperfinite II$_1$ equivalence relation. Nevertheless, if $\Gamma$ is an icc property (T) group, the II$_1$ factor $M$ is not McDuff, i.e.\ cannot be written as a tensor product of some II$_1$ factor with the hyperfinite II$_1$ factor. Indeed, by property (T), all central sequences in $M$ lie asymptotically in the relative commutant $(\rL \Gamma)' \cap M = \rL \Lambda$. Since $\rL \Lambda$ is abelian, it follows that $M$ is not McDuff. So whenever $\Gamma$ is an icc property (T) group, it follows that the $2$-cocycle associated with the Cartan subalgebra $\cC(K_1) \subset M$ is non trivial.
\end{remark}

To prove Theorem \ref{thm.main}, we first have to classify the orbit equivalence relations of the product actions $(\Gamma \times \Lambda K_1/K_1) \times \widehat{K_1}$ on $X/K_1 \times \widehat{\Lambda_1}$, when $K_1$ runs through closed subgroups of $K$ with $\Lambda \cap K_1$ dense in $K_1$. These orbit equivalence relations are the direct product of the orbit equivalence relation of $(\Gamma \times \Lambda K_1 / K_1) \actson X/K_1$ and the unique hyperfinite II$_1$ equivalence relation. In \cite[Theorem 4.1]{PV06}, the actions $(\Gamma \times \Lambda K_1 / K_1) \actson X/K_1$ were classified up to stable orbit equivalence. In Lemma \ref{lemma.reduction}, we redo the proof in the context of a direct product with a hyperfinite equivalence relation. The main difficulty is to make sure that the orbit equivalence ``does not mix up too much'' the first and second factor in the direct product. The orbit equivalence will however not simply split as a direct product of orbit equivalences. The main reason for this is that the II$_1$ factor $N := \rL^\infty(X/K_1) \rtimes (\Gamma \times \Lambda K_1 / K_1)$ has property Gamma. Therefore we cannot apply \cite[Theorem 5.1]{Po06c}, which says the following~: if $N$ is a II$_1$ factor without property Gamma and $R$ is the hyperfinite II$_1$ factor, then every automorphism of $N \ovt R$ essentially splits as a tensor product of two automorphisms.

To state and prove Lemma \ref{lemma.reduction}, we use the following point of view on stable orbit equivalences. Assume that $\cG_i \actson Z_i$, $i=1,2$, are free ergodic p.m.p.\ actions of countable groups $\cG_i$. By definition, a stable orbit equivalence between $\cG_1 \actson Z_1$ and $\cG_2 \actson Z_2$ is a measure space isomorphism $\Delta : \cU_1 \recht \cU_2$ between non negligible subsets $\cU_i \subset Z_i$ satisfying
$$\Delta ( \cG_1 \cdot x \cap \cU_1) = \cG_2 \cdot \Delta(x) \cap \cU_2$$
for a.e.\ $x \in \cU_1$. By ergodicity of $\cG_1 \actson Z_1$, we can choose a measurable map $\Delta_0 : Z_1 \recht \cU_1$ satisfying $\Delta_0(x) \in \cG_1 \cdot x$ for a.e.\ $x \in Z_1$. Denote $\Psi := \Delta \circ \Delta_0$. By construction $\Psi$ is a local isomorphism from $Z_1$ to $Z_2$, meaning that $\Psi : Z_1 \recht Z_2$ is a Borel map such that $Z_1$ can be partitioned into a sequence of non negligible subsets $\cW \subset Z_1$ such that the restriction of $\Psi$ to any of these subsets $\cW$ is a measure space isomorphism of $\cW$ onto some non negligible subset of $Z_2$. Also by construction $\Psi$ is orbit preserving, meaning that for a.e.\ $x,y \in Z_1$ we have that $x \in \cG_1 \cdot y$ iff $\Psi(x) \in \cG_2 \cdot \Psi(y)$.

Using the inverse $\Delta^{-1} : \cU_2 \recht \cU_1$ we analogously find an orbit preserving local isomorphism $\Psib : Z_2 \recht Z_1$. By construction $\Psib(\Psi(x)) \in \cG_1 \cdot x$ for a.e.\ $x \in Z_1$ and $\Psi(\Psib(y)) \in \cG_2 \cdot y$ for a.e.\ $y \in Z_2$. We call $\Psib$ a \emph{generalized inverse} of $\Psi$.

A measurable map $\Psi : Z_1 \recht Z_2$ between standard measure spaces is called a \emph{factor map} iff $\Psi^{-1}(\cV)$ has measure zero whenever $\cV \subset Z_2$ has measure zero. If $\cG_i \actson Z_i$, $i=1,2$, are free ergodic p.m.p.\ actions of the countable groups $\cG_i$ and if $\Psi : Z_1 \recht Z_2$, $\Psib : Z_2 \recht Z_1$ are factor maps satisfying
\begin{align*}
& \Psi(\cG_1 \cdot x) \subset \cG_2 \cdot \Psi(x) \;\;\text{and}\;\; \Psib(\Psi(x)) \in \cG_1 \cdot x \;\;\text{for a.e.}\; x \in Z_1 \;  , \\
& \Psib(\cG_2 \cdot y) \subset \cG_1 \cdot \Psib(y) \;\;\text{and}\;\; \Psi(\Psib(y)) \in \cG_2 \cdot y \;\;\text{for a.e.}\;\; y \in Z_2 \; ,
\end{align*}
then $\Psi$ and $\Psib$ are local isomorphisms, stable orbit equivalences and each other's generalized inverse.

If $\Psi, \Psi' : Z_1 \recht Z_2$ are factor maps that send a.e.\ orbits into orbits, we say that $\Psi$ and $\Psi'$ are \emph{similar} if $\Psi'(x) \in \cG_2 \cdot \Psi(x)$ for a.e.\ $x \in Z_1$.

If $\Gamma \actson (X,\mu)$ and $\Lambda \actson (Y,\eta)$ are p.m.p.\ actions of second countable locally compact groups and if $\delta : \Gamma \recht \Lambda$ is an isomorphism, we call $\Delta : X \recht Y$ a \emph{$\delta$-conjugacy} if $\Delta$ is a measure space isomorphism satisfying $\Delta(g \cdot x) = \delta(g) \cdot \Delta(x)$ a.e.

We start off by the following well known and elementary lemma. For the convenience of the reader, we give a proof.

\begin{lemma}\label{lem.elem}
Let $\cG_i \actson Z_i$ be free ergodic p.m.p.\ actions of countable groups. Let $\Psi : Z_1 \recht Z_2$ be a stable orbit equivalence.
\begin{itemize}
\item If $\Lambda < \cG_1$ is a subgroup and $\Psi(g \cdot x) = \Psi(x)$ for all $g \in \Lambda$ and a.e.\ $x \in Z_1$, then $\Lambda$ is a finite group.
\item If $\delta : \cG_1 \recht \cG_2$ is an isomorphism of $\cG_1$ onto $\cG_2$ and $\Psi(g \cdot x) = \delta(g) \cdot \Psi(x)$ for all $g \in \cG_1$ and a.e.\ $x \in Z_1$, then $\Psi$ is a measure space isomorphism.
\end{itemize}
\end{lemma}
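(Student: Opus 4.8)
The plan is to use the explicit description of a stable orbit equivalence $\Psi$ recalled just above as a \emph{local isomorphism}: there is a partition $Z_1 = \bigsqcup_n \cW_n$ into non-negligible Borel sets on which each restriction $\Psi|_{\cW_n}$ is a measure space isomorphism onto $V_n := \Psi(\cW_n) \subset Z_2$; in particular each $\Psi|_{\cW_n}$ is essentially injective and measure preserving. Since $\cG_1$ is countable I fix once and for all a $\cG_1$-invariant conull set on which the action is free and on which all the almost-everywhere hypotheses hold simultaneously.

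For the first statement I claim that each piece $\cW_n$ meets every $\Lambda$-orbit at most once. Indeed, if $x$ and $g\cdot x$ both lie in $\cW_n$ for some $g \in \Lambda \setminus \{e\}$, then freeness gives $x \neq g\cdot x$ whereas the hypothesis gives $\Psi(x) = \Psi(g\cdot x)$, contradicting injectivity of $\Psi$ on $\cW_n$. Hence for $g \neq h$ in $\Lambda$ the translates $g\cdot\cW_n$ and $h\cdot\cW_n$ are essentially disjoint, so
$$|\Lambda|\,\mu_1(\cW_n) = \sum_{g\in\Lambda}\mu_1(g\cdot\cW_n) = \mu_1\Bigl(\bigsqcup_{g\in\Lambda} g\cdot\cW_n\Bigr) \le 1 ,$$
using that the action preserves $\mu_1$. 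If $\Lambda$ were infinite this would force $\mu_1(\cW_n)=0$ for all $n$ and hence $\mu_1(Z_1)=0$, which is absurd. So $\Lambda$ is finite.

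For the second statement the idea is that $\delta$-equivariance forces the a priori measure-distorting map $\Psi$ to be an honest isomorphism, and I will read this off the multiplicity. Since each $\Psi|_{\cW_n}$ is measure preserving, the pushforward satisfies
$$\Psi_*\mu_1 = \sum_n \mu_2|_{V_n} = m\,\mu_2 , \qquad \text{where } m(z) := \#\{\,n : z \in V_n\,\} = \#\Psi^{-1}(z),$$
and $\int_{Z_2} m\,d\mu_2 = \sum_n \mu_2(V_n) = \sum_n \mu_1(\cW_n) = 1$. Now $\delta$-equivariance together with the $\cG_1$-invariance of $\mu_1$ and the surjectivity of $\delta$ shows that $\Psi_*\mu_1$ is $\cG_2$-invariant; as $\Psi_*\mu_1 = m\,\mu_2$ with $\mu_2$ itself $\cG_2$-invariant, the density $m$ is a $\cG_2$-invariant function, hence constant by ergodicity of $\cG_2 \actson (Z_2,\mu_2)$. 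The constant must be $1$ because $\int m\,d\mu_2 = 1 = \mu_2(Z_2)$. Thus $m \equiv 1$ almost everywhere, so $\Psi$ is essentially injective, has conull image and preserves the measure, i.e.\ it is a measure space isomorphism.

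The only genuinely delicate points are organizational: one must exploit the piecewise measure-space-isomorphism property to identify the density of $\Psi_*\mu_1$ with the integer-valued multiplicity $m$, and then observe that equivariance pins this multiplicity to $1$ — which is precisely the statement that a $\delta$-equivariant stable orbit equivalence has compression constant $1$. The first statement, by contrast, is a pure disjointness-and-measure count relying only on freeness and the local injectivity of $\Psi$.
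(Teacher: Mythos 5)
Your first bullet is correct and is the paper's own argument: the hypothesis plus injectivity of $\Psi$ on a piece makes the $\Lambda$-translates of that piece essentially disjoint, and finiteness of the total measure bounds $|\Lambda|$. The gap is in the second bullet, and it sits exactly where the content of the lemma lies: you assert that each restriction $\Psi|_{\cW_n}$ is \emph{measure preserving}. In this paper a ``measure space isomorphism'' between non-negligible subsets of probability spaces preserves the measure only up to a proportionality constant (see the proof of Theorem \ref{thm.main}, where a stable orbit equivalence is explicitly described as ``a measure space isomorphism (scaling the measure by the factor $\tau(p)$)''). For $\Psi = \Delta \circ \Delta_0$ as constructed before the lemma, every piece $\Psi|_{\cW_n}$ scales the measure by the compression constant $c$ of $\Delta$, which in general is $\neq 1$. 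So assuming measure-preserving pieces amounts to assuming $c = 1$, which --- as you yourself remark at the end --- is precisely what the second bullet asserts. Redoing your pushforward computation with the correct scaling gives $\Psi_*\mu_1 = c^{-1} m\,\mu_2$ and $\int_{Z_2} m\,d\mu_2 = c$; your equivariance-and-ergodicity step (which is correct) then yields $m \equiv c$. The method thus shows that an equivariant stable orbit equivalence is essentially $c$-to-one with $c$ a positive integer, but it cannot exclude $c \geq 2$.

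Excluding $c \geq 2$ requires two hypotheses your argument never uses: that $\Psi$ maps orbits to orbits in both directions (so that two preimages of the same point lie in a single $\cG_1$-orbit) and that $\cG_2 \actson Z_2$ is essentially free. This is the paper's route: essential surjectivity follows from ergodicity of $\cG_2 \actson Z_2$, and if $\Psi(x) = \Psi(g \cdot x)$ with $g \neq e$, then equivariance gives $\delta(g) \cdot \Psi(x) = \Psi(x)$, so essential freeness forces $\delta(g) = e$, a contradiction. Freeness is genuinely needed, not a convenience: take a free ergodic p.m.p.\ action $\cG \actson (Z,\mu)$, a nontrivial finite group $L$, let $\cG_1 = \cG_2 := \cG \times L$ act on $Z_1 := Z \times L$ by the product of the given action and left translation, let $Z_2 := Z$ carry the action in which the $L$-factor acts trivially, and let $\Psi : Z_1 \recht Z_2$ be the projection. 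Then $\Psi$ is a stable orbit equivalence, equivariant with respect to $\delta = \id$, all of your displayed identities hold with $c = |L|$, and $\Psi$ is $|L|$-to-one; the only failed hypothesis is essential freeness of $\cG_2 \actson Z_2$. A proof of the second bullet that never invokes this freeness cannot be complete.
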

\begin{proof}
To prove the first point take a non negligible subset $\cU \subset Z_1$ such that $\Psi_{|\cU}$ is a measure space isomorphism of $\cU$ onto $\cV \subset Z_2$. Since $\cG_1 \actson Z_1$ is essentially free, it follows that $g \cdot \cU \cap \cU$ is negligible for all $g \in \Lambda - \{e\}$. Hence the sets $(g \cdot \cU)_{g \in \Lambda}$ are essentially disjoint. Since $Z_1$ has finite measure and $\cU$ is non negligible, it follows that $\Lambda$ is finite.

We now prove the second point. Since $\Psi$ is a local isomorphism, the image $\Psi(Z_1)$ is a measurable, $\delta(\cG_1)$-invariant, non negligible subset of $Z_2$. Since $\delta(\cG_1) = \cG_2$ and $\cG_2 \actson Z_2$ is ergodic, it follows that $\Psi$ is essentially surjective. Assume that $\Psi$ is not a measure space isomorphism. Since $\Psi$ is a local isomorphism, we find non negligible disjoint subsets $\cU_1,\cU_2 \subset Z_1$ such that $\Psi_{|\cU_i}$, $i =1,2$, are measure space isomorphisms onto the same subset $\cV \subset Z_2$. Since $\Psi$ is a stable orbit equivalence, $(\Psi_{|\cU_2})^{-1}(\Psi(x)) \in \cG_1 \cdot x$ for a.e.\ $x \in \cU_1$. Making $\cU_1, \cU_2$ smaller, we may assume that there exists a $g \in \cG_1$ such that $(\Psi_{|\cU_2})^{-1}(\Psi(x)) = g \cdot x$ for a.e.\ $x \in \cU_1$. Since $\cU_1$ and $\cU_2$ are disjoint, it follows that $g \neq e$. But it also follows that $\delta(g) \cdot \Psi(x) = \Psi(x)$ for a.e.\ $x \in \cU_1$. Since $\cG_2 \actson Z_2$ is essentially free, we arrive at the contradiction that $\delta(g) = e$.
\end{proof}

\begin{lemma}\label{lemma.reduction}
Let $\Gamma$ be an icc property (T) group with $[\Gamma,\Gamma] = \Gamma$. As above take a compact abelian group $K$ with dense countable subgroup $\Lambda < K$. Put $X := K^\Gamma$.

Let $K_i < K$, $i=1,2$, be closed subgroups. Denote $G_i := \Gamma \times \Lambda K_i$ with its natural action on $X$ given by
$$G_i \actson K^\Gamma : ((g,k) \cdot x)_h = k \, x_{hg} \quad\text{for all}\;\; g,h \in \Gamma, k \in \Lambda K_i, x \in K^\Gamma \; .$$
Let $H_i \actson Y_i$ be free ergodic p.m.p.\ actions of the countable abelian groups $H_i$. Assume that
$$\Delta : X/K_1 \times Y_1 \recht X/K_2 \times Y_2$$
is a stable orbit equivalence between the product actions $G_i/K_i \times H_i \actson X/K_i \times Y_i$. Then there exists
\begin{itemize}
\item a compact subgroup $K'_1 < \Lambda K_1$ that is commensurate with $K_1$, with corresponding canonical stable orbit equivalence $\Delta_1 : X/K'_1 \recht X/K_1$ between the actions $G_1/K'_1 \actson X/K'_1$ and $G_1/K_1 \actson X/K_1$;
\item an automorphism $\delta_1 \in \Aut \Gamma$ and an isomorphism $\delta_2 : \Lambda K_1/K'_1 \recht \Lambda K_2/K_2$;
\item a stable orbit equivalence $\Psi : Y_1 \recht Y_2$ between the actions $H_i \actson Y_i$;
\item a measurable family $(\Theta_y)_{y \in Y_1}$ of $(\delta_1 \times \delta_2)$-conjugacies between the actions $G_1/K'_1 \actson X/K'_1$ and $G_2/K_2 \actson X/K_2$;
\item a $\delta_1$-conjugacy $\Theta_1$ of $\Gamma \actson X/K$;
\end{itemize}
such that the stable orbit equivalence $\Delta \circ (\Delta_1 \times \id)$ is similar with the stable orbit equivalence
$$\Theta : X/K'_1 \times Y_1 \recht X/K_2 \times Y_2 : (x,y) \mapsto (\Theta_y(x),\Psi(y))$$
and such that $\Theta_y(x K_1')K = \Theta_1(xK)$ a.e.
\end{lemma}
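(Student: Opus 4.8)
The plan is to turn the stable orbit equivalence $\Delta$ into a Zimmer $1$-cocycle and to untwist it using property (T) of $\Gamma$. Write $Z_i := X/K_i\times Y_i$, $\cG_1 := \Gamma\times(\Lambda K_1/K_1)\times H_1$ and $\cG_2 := \Gamma\times(\Lambda K_2/K_2)\times H_2$, so that $\Delta$ is a stable orbit equivalence between the free ergodic actions $\cG_i\actson Z_i$. As in the discussion preceding Lemma \ref{lem.elem}, after replacing $\Delta$ by a similar stable orbit equivalence we may view it as an orbit preserving local isomorphism, and by freeness of $\cG_2\actson Z_2$ it defines a measurable cocycle $\omega:\cG_1\times Z_1\recht\cG_2$ with $\Delta(h\cdot z)=\omega(h,z)\cdot\Delta(z)$. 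First I would restrict $\omega$ to the subgroup $\Gamma<\cG_1$. The action $\Gamma\actson Z_1$ is the product of the weakly mixing, s-malleable quotient Bernoulli action $\Gamma\actson X/K_1$ with the trivial action on $Y_1$, so treating $Y_1$ as a space of coefficients, Popa's cocycle superrigidity theorem for property (T) groups applies and shows that $\omega|_\Gamma$ is cohomologous to a group homomorphism. Replacing $\Delta$ once more by $z\mapsto\vphi(z)^{-1}\cdot\Delta(z)$ for the correcting map $\vphi:Z_1\recht\cG_2$, we may assume that $\Delta(g\cdot z)=\delta(g)\cdot\Delta(z)$ for all $g\in\Gamma$ and a homomorphism $\delta:\Gamma\recht\cG_2$.

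Next I would pin down $\delta$. Since $\Gamma=[\Gamma,\Gamma]$ while the second and third factors of $\cG_2$ are abelian, $\delta$ takes values in the first factor, i.e.\ $\delta=\delta_1$ for some $\delta_1:\Gamma\recht\Gamma$. To see that $\delta_1\in\Aut\Gamma$, I would run the same untwisting for a generalized inverse $\Psib$ of $\Delta$, producing $\delta_1':\Gamma\recht\Gamma$ with $\delta_1'\circ\delta_1$ inner, and combine this with Lemma \ref{lem.elem}: an element of $\Ker\delta_1$ would act trivially on $Z_2$ through $\Delta$ and hence generate a finite subgroup, forcing injectivity, and symmetry then gives surjectivity. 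Thus $\delta_1$ is an automorphism of $\Gamma$ and $\Delta$ is a $\delta_1$-conjugacy for the $\Gamma$-actions.

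The heart of the argument, and the step I expect to be the main obstacle, is to show that $\Delta$ does not mix the two factors; this is exactly where the absence of a tensor splitting (property Gamma of $N$) must be circumvented by hand. Write $\Delta(x,y)=(\theta(x,y),\psi(x,y))$. Because $\delta_1(\Gamma)$ lies in the first factor of $\cG_2$ and hence acts trivially on $Y_2$, the map $\psi$ is $\Gamma$-invariant in the $x$-variable; by weak mixing (in particular ergodicity) of $\Gamma\actson X/K_1$ it therefore depends only on $y$, giving a measurable $\Psi:Y_1\recht Y_2$ with $\psi(x,y)=\Psi(y)$. Restricting $\omega$ to $H_1$ (which acts trivially on $X/K_1$) and using freeness of $H_2\actson Y_2$ I would check that $\Psi$ maps $H_1$-orbits into $H_2$-orbits and is a local isomorphism; applying the generalized inverse criterion stated before Lemma \ref{lem.elem} to $\Psi$ and the analogous map built from $\Psib$ shows that $\Psi$ is a stable orbit equivalence between $H_1\actson Y_1$ and $H_2\actson Y_2$. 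Setting $\Theta_y:=\theta(\cdot,y):X/K_1\recht X/K_2$, each $\Theta_y$ is then a local isomorphism that is $\delta_1$-equivariant for $\Gamma$.

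It remains to upgrade the family $(\Theta_y)$ to genuine conjugacies and to extract the commensurate subgroup $K_1'$ and the isomorphism $\delta_2$. Composing $\Theta_y$ with the quotient $X/K_2\recht X/K$ kills the translation action of $\Lambda K_2/K_2$. The translation action of the compact group $K/K_1$ on $X/K_1$ commutes with $\Gamma$, so precomposing $\Theta_y$ with such a translation is again $\delta_1$-equivariant; using weak mixing of $\Gamma\actson X/K$ together with the Bernoulli structure I would show that the composite is $K/K_1$-invariant and hence descends to a $\delta_1$-equivariant local isomorphism $\Theta_{1,y}:X/K\recht X/K$, which by Lemma \ref{lem.elem}(2) is a $\delta_1$-conjugacy. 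A similar weak mixing argument comparing different values of $y$ shows $\Theta_{1,y}=:\Theta_1$ is independent of $y$, yielding the compatibility $\Theta_y(xK_1')K=\Theta_1(xK)$. Finally, restricting $\omega$ to $\Lambda K_1/K_1$ and using that the $Y_2$-coordinate is fixed shows that $\Theta_y$ intertwines the $K/K_1$-translations with $K/K_2$-translations up to the $\Gamma$-part already recorded by $\Theta_1$; reading this off on the compact fibres produces an isomorphism of compact groups carrying the image of $\Lambda$ to the image of $\Lambda$, which becomes exact only after replacing $K_1$ by a commensurate compact subgroup $K_1'<\Lambda K_1$. The commensurability gives the canonical stable orbit equivalence $\Delta_1:X/K_1'\recht X/K_1$, the fibrewise isomorphism gives $\delta_2:\Lambda K_1/K_1'\recht\Lambda K_2/K_2$, and then each $\Theta_y$ becomes a $(\delta_1\times\delta_2)$-conjugacy, so that $\Delta\circ(\Delta_1\times\id)$ is similar to $(x,y)\mapsto(\Theta_y(x),\Psi(y))$, as required (compare \cite{PV06}). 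The most delicate point throughout is controlling the $\Lambda K_1/K_1$- and $H_1$-cocycles so that they land in the correct factors of $\cG_2$ and do not reintroduce mixing between the Bernoulli and hyperfinite directions.
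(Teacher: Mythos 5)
Your overall architecture---untwist the Zimmer cocycle by Popa's cocycle superrigidity, separate the $Y$-direction by weak mixing, and finish with Lemma \ref{lem.elem}---is the right one, but your first step contains a genuine error which the rest of the argument inherits. You apply cocycle superrigidity to the \emph{quotient} action $\Gamma \actson X/K_1$, calling it an ``s-malleable quotient Bernoulli action'' and taking the transfer function on $Z_1 = X/K_1 \times Y_1$. That action is not malleable and is not cocycle superrigid in the sense you use. Concretely, choose a measurable section $s : X/K_1 \recht X$ and define the monodromy cocycle $m(g,\bar x) \in K_1$ by $g \cdot s(\bar x) = m(g,\bar x) \cdot s(g \cdot \bar x)$. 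If $m$ were cohomologous to a homomorphism, then (since $[\Gamma,\Gamma]=\Gamma$ and the target is abelian) it would be a coboundary, and the corrected section would be $\Gamma$-equivariant, splitting $X \cong X/K_1 \times K_1$ with trivial $\Gamma$-action on the $K_1$-factor; this contradicts ergodicity of the Bernoulli action $\Gamma \actson K^\Gamma$. This is not a peripheral pathology: taking $K_1$ finite with $K_1 \subset \Lambda$ and $K_2 = \{e\}$, the covering $X \recht X/K_1$ is itself a stable orbit equivalence falling under the hypotheses of the lemma, and its Zimmer cocycle restricted to $\Gamma$ is exactly $g \mapsto (g, m(g,\bar x)^{-1})$; your first step asserts that it can be untwisted with transfer function on $X/K_1 \times Y_1$, which is false (projecting the would-be untwisting onto the abelian coordinate reproduces the contradiction above). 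Carried through, your argument would produce conjugacies $\Theta_y : X/K_1 \recht X/K_2$, i.e.\ it would prove the lemma with $K'_1 = K_1$ always---a strengthening that this same example refutes, since there one must take $K'_1 = \{e\} \neq K_1$.

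This is precisely why the paper never works on $X/K_1$. It lifts $\Delta$ to $\Theta(x,y) = \Delta(xK_1,y)$ on $X \times Y_1$ and untwists the cocycle of the \emph{locally compact} group $G_1 = \Gamma \times \Lambda K_1$ (with $K_1$ compact open) acting on $X$, combining \cite[Theorem 0.1]{Po05} for the genuine Bernoulli action $\Gamma \actson K^\Gamma$ with \cite[Lemma 5.5]{PV08} to cover the commuting part $\Lambda K_1$. The subgroup $K'_1$ is then \emph{constructed} as the common kernel of the resulting family of homomorphisms $\delta_y : G_1 \recht G_2/K_2$, its commensurability with $K_1$ is proved via Lemma \ref{lem.elem}, and icc-ness of $\Gamma$ forces $K'_1 < \Lambda K_1$. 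No such mechanism exists in your proposal; $K'_1$ enters only in your last paragraph, by assertion rather than construction. Two further gaps of the same kind: since $\Gamma \actson Z_1$ is not ergodic, the untwisted object is necessarily a measurable \emph{family} of homomorphisms indexed by $y \in Y_1$, not the single homomorphism $\delta$ you posit; making the $\Gamma$-part independent of $y$ requires the conjugation relation between $\delta_{h \cdot y}$ and $\delta_y$, the countability of $\Out \Gamma$ (from property (T)), and a correction of $\Theta_y$ by an inner term, while the $y$-independence of $\delta_2$ uses that $\Lambda K_2/K_2$ is the center of $G_2/K_2$. None of these steps can be skipped, and none appear in the proposal.
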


Note that we turn $G_i$ into a locally compact group by requiring that $K_i < G_i$ is a compact open subgroup.

\begin{proof}
Denote
$$\Theta : X \times Y_1 \recht X/K_2 \times Y_2 : \Theta(x,y) = \Delta(x K_1,y) \; .$$
Associated with the orbit map $\Theta$ are two measurable families of cocycles:
\begin{align*}
\om^y : G_1 \times X \recht G_2/K_2 \times H_2 &\quad\text{determined by}\;\; \Theta(g \cdot x,y) = \om^y(g,x) \cdot \Theta(x,y) \; ,\\
\mu^x : H_1 \times Y_1 \recht G_2/K_2 \times H_2 &\quad\text{determined by}\;\; \Theta(x, h \cdot y) = \mu^x(h,y) \cdot \Theta(x,y) \; .
\end{align*}
By a combination of Popa's cocycle superrigidity theorem for $\Gamma \actson K^\Gamma$ (see \cite[Theorem 0.1]{Po05}) and the elementary \cite[Lemma 5.5]{PV08}, we can replace $\Theta$ by a similar factor map and find measurable families of continuous group homomorphisms
$$\delta_y : G_1 \recht G_2/K_2 \quad\text{and}\quad \eta_y : G_1 \recht H_2$$
such that
$$\om^y(g,x) = (\delta_y(g),\eta_y(g)) \quad\text{and}\quad \Theta(g \cdot x,y) = (\delta_y(g),\eta_y(g)) \cdot \Theta(x,y) \quad\text{a.e.}$$
Since $[\Gamma,\Gamma]=\Gamma$ and $H_2$ is abelian, it follows that $\eta_y(g) = e$ for all $g \in \Gamma$ and a.e.\ $y \in Y_1$. Writing $\Theta(x,y) = (\Theta_1(x,y),\Theta_2(x,y))$, it follows that $\Theta_2(g \cdot x,y) = \Theta_2(x,y)$ for all $g \in \Gamma$ and a.e.\ $(x,y) \in X \times Y_1$. By ergodicity of $\Gamma \actson X$, it follows that $\Theta_2(x,y) = \Psi(y)$ for some factor map $\Psi : Y_1 \recht Y_2$. But then
$$\eta_y(g) \cdot \Psi(y) = \eta_y(g) \cdot \Theta_2(x,y) = \Theta_2(g \cdot x, y) = \Psi(y)$$
for all $g \in G_1$ and a.e.\ $(x,y) \in X \times Y_1$. It follows that $\eta_y(g) = e$ for all $g \in G_1$ and a.e.\ $y \in Y_1$.

Denote $\mu^x(h,y) = (\mu^x_1(h,y),\mu^x_2(h,y))$. Because
$$\mu^x_2(h,y) \cdot \Psi(y) = \mu^x_2(h,y) \cdot \Theta_2(x,y) = \Theta_2(x,h \cdot y) = \Psi(h \cdot y)$$
for all $h \in H_1$ and a.e.\ $(x,y) \in X \times Y_1$, it follows that $\mu^x_2(h,y)$ is essentially independent of the $x$-variable. So we write $\mu_2(h,y)$ instead of $\mu^x_2(h,y)$.

Next we observe that $\Theta_1(g \cdot x,h \cdot y)$ can be computed in two ways, leading to the equality
$$\delta_{h \cdot y}(g) \, \mu^x_1(h,y) \cdot \Theta_1(x,y)  =  \mu^{g \cdot x}_1 (h,y) \, \delta_y(g) \cdot \Theta_1(x,y) \quad\text{a.e.}$$
So, $\mu^{g \cdot x}_1(h,y) = \delta_{h \cdot y}(g) \, \mu^x_1(h,y) \, \delta_y(g)^{-1}$ for all $g \in G_1$, $h \in H_1$ and a.e.\ $(x,y) \in X \times Y_1$.

It follows that for all $h \in H_1$ and a.e.\ $y \in Y_1$, the non negligible set $\{(x,x') \in X \times X \mid \mu^x_1(h,y) = \mu^{x'}_1(h,y)\}$ is invariant under the diagonal action of $\Gamma$ on $X \times X$. Since $\Gamma \actson X$ is weakly mixing, we conclude that $\mu^x_1(h,y)$ is essentially independent of the $x$-variable. We write $\mu_1(h,y)$ instead of $\mu^x_1(h,y)$. The above formula becomes
\begin{equation}\label{eq.delta}
\delta_{h \cdot y}(g) = \mu_1(h,y) \, \delta_y(g) \, \mu_1(h,y)^{-1} \; .
\end{equation}
It follows that the map $y \mapsto \Ker \delta_y$ is $H_1$-invariant and hence, by ergodicity of $H_1 \actson Y_1$, there is a unique closed subgroup $K'_1 < G_1$ such that $\Ker \delta_y = K'_1$ for a.e.\ $y \in Y_1$. Since $G_2/K_2$ is a countable group, we know that its compact subgroup $\delta_y(K_1)$ must be finite. Hence, $K'_1 \cap K_1 < K_1$ has finite index. We claim that also $K'_1 \cap K_1 < K'_1$ has finite index.

To prove this claim, first observe that the formula $\Theta(g \cdot x,y) = \delta_y(g) \cdot \Theta(x,y)$ implies that we can view $\Theta$ as a map from $X/K'_1 \times Y_1$ to $X/K_2 \times Y_2$. In particular $\Theta$ can be viewed as a map from $X/(K_1 \cap K'_1) \times Y_1$ to $X/K_2 \times Y_2$. Since $X/(K_1 \cap K'_1)$ is a finite covering of $X/K_1$ and since the original $\Delta$ was a local isomorphism, also
$$\Theta : X/(K_1 \cap K'_1) \times Y_1 \recht X/K_2 \times Y_2$$
is a local isomorphism and hence a stable orbit equivalence. Since moreover $\Theta(g \cdot x,y) = \Theta(x,y)$ for all $g \in K'_1/(K_1 \cap K'_1)$, it follows from Lemma \ref{lem.elem} that $K'_1/(K_1 \cap K'_1)$ is a finite group. This proves the claim, meaning that $K'_1$ and $K_1$ are commensurate subgroups of $G_1$. In particular, $K_1'$ is compact. Denoting by $p_\Gamma : G_1 \recht \Gamma$ the projection homomorphism onto the first factor, we get that $p_\Gamma(K_1')$ is a finite normal subgroup of $\Gamma$. Since $\Gamma$ is icc, it follows that $p_\Gamma(K_1') = \{e\}$ and thus $K'_1 < \Lambda K_1$.

By construction the map
$$\Theta : X/K'_1 \times Y_1 \recht X/K_2 \times Y_2$$
is a stable orbit equivalence with the following properties:
\begin{itemize}
\item $\Theta$ is similar with $\Delta \circ (\Delta_1 \times \id)$~;
\item $\Theta$ is of the form $\Theta(x,y) = (\Theta_y(x), \Psi(y))$~;
\item and $\Theta$ satisfies $\Theta_y(g \cdot x) = \delta_y(g) \cdot \Theta_y(x)$ a.e.\ where $\delta_y : G_1/K'_1 \recht G_2/K_2$ is a measurable family of injective group homomorphisms.
\end{itemize}
It remains to prove that the $\delta_y$ are group isomorphisms that do not depend on $y$, that $\Psi$ is a stable orbit equivalence and that a.e.\ $\Theta_y$ is a measure space isomorphism of $X/K'_1$ onto $X/K_2$.

Applying the same reasoning to the inverse of the stable orbit equivalence $\Delta \circ (\Delta_1 \times \id)$, we find a closed subgroup $K'_2 < \Lambda K_2$ that is commensurate with $K_2$, a measurable family of injective group homomorphisms $\deltab_y : G_2/K'_2 \recht G_1/K'_1$ and a map
$$\Thetab : X/K'_2 \times Y_2 \recht X/K'_1 \times Y_1$$
of the form $\Thetab(x,y) = (\Thetab_y(x),\Psib(y))$ satisfying the following two properties:
\begin{align}
&\Thetab_y(g \cdot x) = \deltab_y(g) \cdot \Thetab_y(x) \quad\text{and} \notag\\
&\Theta(\Thetab(x K'_2,y)) \in (G_2/K_2 \times H_2) \cdot (x K_2,y) \quad\text{a.e.}\label{eq.orbits}
\end{align}
The second of these formulae yields a measurable map $\vphi : X \times Y_2 \recht G_2/K_2$ such that
$$\Theta_{\Psib(y)}(\Thetab_y(x K'_2)) = \vphi(x,y) \cdot xK_2 \quad\text{a.e.}$$
Computing $\Theta_{\Psib(y)}(\Thetab_y(gK'_2 \cdot x K'_2))$ it follows that
$$\delta_{\Psib(y)}(\deltab_y(gK'_2)) \, \vphi(x,y) = \vphi(g \cdot x,y) \, gK_2 \quad\text{for all $g \in G_2$ and a.e.\ $(x,y) \in X \times Y_2$.}$$
So, for almost every $y \in Y_2$, the non negligible set $\{(x,x') \in X \times X \mid \vphi(x,y) = \vphi(x',y)\}$ is invariant under the diagonal action of $\Gamma$ on $X \times X$.
Since $\Gamma \actson X$ is weakly mixing, it follows that $\vphi(x,y)$ is essentially independent of the $x$-variable. We write $\vphi(y)$ instead of $\vphi(x,y)$ and find that
$$(\delta_{\Psib(y)} \circ \deltab_y)(g K'_2) = \vphi(y) \, g K_2 \, \vphi(y)^{-1} \; .$$
It follows that $K'_2 = K_2$ and that $\delta_{\Psib(y)} \circ \deltab_y$ is an inner automorphism of $G_2/K_2$. It follows in particular that $\delta_{\Psib(y)}$ is surjective, so that $\delta_y$ is an isomorphism of $G_1/K'_1$ onto $G_2/K_2$ for all $y$ in a non negligible subset of $Y_1$. Because of \eqref{eq.delta} and the ergodicity of $H_1 \actson Y_1$, it follows that $\delta_y$ is an isomorphism for a.e.\ $y \in Y_1$.

It also follows from \eqref{eq.orbits} that $\Psi(\Psib(y)) \in H_2 \cdot y$ for a.e.\ $y \in Y_2$ and similarly with $\Psib \circ \Psi$. Hence $\Psi$ and $\Psib$ are local isomorphisms and hence a stable orbit equivalence between $H_1 \actson Y_1$ and $H_2 \actson Y_2$. Since $\Theta(x,y) = (\Theta_y(x), \Psi(y))$ and since $\Theta$ is a local isomorphism, it then follows that a.e.\ $\Theta_y$ is a local isomorphism. But $\Theta_y$ is also a $\delta_y$-conjugation w.r.t.\ the isomorphism $\delta_y$. Lemma \ref{lem.elem} implies that $\Theta_y$ is a measure space isomorphism for a.e.\ $y \in Y_1$.

We know that $[\Gamma,\Gamma] = \Gamma$, that $\Gamma$ has trivial center and that the groups $\Lambda K_i/K'_i$ are abelian. So it follows that $\delta_y$ is the direct product of an automorphism $\delta^1_y$ of $\Gamma$ and an isomorphism $\delta^2_y$ of $\Lambda K_1/K'_1$ onto $\Lambda K_2/K_2$. Formula \eqref{eq.delta} and the fact that $\Lambda K_2/K_2$ is the center of $G_2/K_2$ imply that $\delta^2_{h \cdot y} = \delta^2_y$ a.e. By ergodicity of $H_1 \actson Y_1$, it follows that $\delta^2_y$ is essentially independent of $y$. We write $\delta_2$ instead of $\delta^2_y$. Next denote by $\mu_\Gamma(h,y)$ the projection of $\mu_1(h,y) \in \Gamma \times \Lambda K_2/K_2$ onto $\Gamma$. Formula \eqref{eq.delta} says that
$$\delta^1_{h \cdot y} = \Ad \mu_\Gamma(h,y) \circ \delta^1_y \; .$$
Since $\Gamma$ has property (T), $\Gamma$ is finitely generated and $\Aut \Gamma$ is a countable group. In particular, $\Out \Gamma = \Aut \Gamma / \Inn \Gamma$ is a countable group. Then the map $y \mapsto (\delta^1_y \;\text{mod}\; \Inn \Gamma)$ is an $H_1$-invariant measurable map from $Y_1$ to $\Out \Gamma$ and hence is essentially constant. So we find a measurable map $\vphi : Y_1 \recht \Gamma$ and an automorphism $\delta_1 \in \Aut \Gamma$ such that $\delta^1_y = \Ad \vphi(y) \circ \delta_1$ a.e. Replacing $\Theta_y(x)$ by $\vphi(y)^{-1} \cdot \Theta_y(x)$ we may assume that $\delta_y$ is a.e.\ equal to $\delta_1 \times \delta_2$. After this replacement, also $\mu_\Gamma(h,y) = e$ a.e.\ meaning that $\mu_1(h,y) \in \Lambda K_2/K_2$.

Since $\Theta_y$ is a $\delta_2$-conjugacy and $\Lambda K_1, \Lambda K_2$ are dense subgroups of $K$, the $\Theta_y$ induce $\delta_1$-conjugacies $\rho_y$ of the action $\Gamma \actson X/K$ given by $\rho_y(xK) = \Theta_y(x K'_1) K$. Since $\mu_1(h,y) \in \Lambda K_2/K_2$, it follows that $\rho_{h \cdot y}(x) = \rho_y(x)$ a.e. Hence $\rho_y$ is a.e.\ equal to a single $\delta_1$-conjugacy $\Theta_1$ of the action $\Gamma \actson X/K$.
\end{proof}

\begin{lemma}\label{lem.density}
Let $K$ be a compact abelian group with countable dense subgroup $\Lambda$. Let $K_1, K_2 < K$ be closed subgroups and denote $\Lambda_i := \Lambda \cap K_i$.
\begin{enumerate}
\item If $K_2 \subset \Lambda K_1$ and if $\Lambda K_1 / K_2$ is countable, then $K_1$ and $K_2$ are commensurate.

\item Assume that $K_1$ and $K_2$ are commensurate and that $K_2 \subset \Lambda K_1$. If $\Lambda_1$ is dense in $K_1$, also $\Lambda_2$ is dense in $K_2$.

\item Assume that $\Lambda_i$ is dense in $K_i$ for $i=1,2$. Then the following statements are equivalent.
\begin{enumerate}
\item $\Lambda_1$ and $\Lambda_2$ are commensurate.
\item $K_1$ and $K_2$ are commensurate.
\item $\Lambda K_1 = \Lambda K_2$.
\end{enumerate}
\end{enumerate}
\end{lemma}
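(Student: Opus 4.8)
The plan is to handle the three parts in order, leaning on one principle that recurs throughout: a subgroup of $K$ that is abstractly countable but carries a compact Hausdorff group topology must be finite (a countably infinite compact Hausdorff group is impossible by homogeneity plus Baire category). Alongside this I will use two standard facts about compact abelian groups, namely that a closed finite-index subgroup is open, and that a dense subgroup therefore surjects onto any finite quotient $K_i/K_0$.

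For part (1), I would set $K_0 := K_1 \cap K_2$, which is closed, so each $K_i/K_0$ is a compact group, and bound the two indices separately. The index $[K_2:K_0]$ comes for free from the inclusion $K_2 \subset \Lambda K_1$: projecting $K_2$ into $\Lambda K_1/K_1 \cong \Lambda/\Lambda_1$ injects $K_2/K_0$ into a countable group, so $K_2/K_0$ is a countable compact group, hence finite. The hypothesis is used for the other index: $K_1/K_0 \cong K_1 K_2/K_2$ sits inside $\Lambda K_1/K_2$, which is countable by assumption, so $K_1/K_0$ is finite as well. Thus $K_1$ and $K_2$ are commensurate.

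For part (2), by commensurability $K_0 = K_1 \cap K_2$ is a finite-index, hence open, subgroup of $K_1$, so density of $\Lambda_1$ in $K_1$ forces $\Lambda_1 \cap K_0 = \Lambda \cap K_0$ to be dense in $K_0$; since $\Lambda \cap K_0 \subseteq \Lambda_2$ this already gives $K_0 \subseteq \overline{\Lambda_2}$. It remains to reach the finitely many $K_0$-cosets making up $K_2$, and here is the key computation: given $k \in K_2 \subseteq \Lambda K_1$, write $k = \lambda k_1$ with $\lambda \in \Lambda$, $k_1 \in K_1$; since the dense $\Lambda_1$ surjects onto $K_1/K_0$, choose $s \in \Lambda_1$ with $k_0 := s^{-1}k_1 \in K_0$, so $k = \lambda s\, k_0$. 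Then $\lambda s \in \Lambda$ and $\lambda s = k k_0^{-1} \in K_2 K_0 = K_2$, whence $\lambda s \in \Lambda_2$; combined with $k_0 \in K_0 \subseteq \overline{\Lambda_2}$ this yields $k \in \overline{\Lambda_2}$. As $k$ was arbitrary, $\overline{\Lambda_2} = K_2$. The care here is to land $\lambda s$ in $\Lambda \cap K_2$, not merely in $\Lambda$ or merely in $K_2$.

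For part (3), I would close the cycle $(a)\Leftrightarrow(b)$ and $(b)\Leftrightarrow(c)$ by recycling the earlier parts plus one closure fact: the closure of a finite-index subgroup has finite index (cover by finitely many cosets and take closures). For $(a)\Rightarrow(b)$, this fact applied to $\Lambda_1 \cap \Lambda_2 = \Lambda \cap K_0$ shows $\overline{\Lambda_1 \cap \Lambda_2}$ has finite index in $\overline{\Lambda_1} = K_1$ and in $\overline{\Lambda_2} = K_2$; as that closure lies in $K_0$, both $[K_i:K_0]$ are finite. For $(b)\Rightarrow(a)$, conversely $\Lambda_i/(\Lambda \cap K_0)$ injects into $K_i/K_0$, so $\Lambda \cap K_0 = \Lambda_1 \cap \Lambda_2$ has finite index in each $\Lambda_i$. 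For $(b)\Rightarrow(c)$, density of $\Lambda_i$ makes it surject onto $K_i/K_0$, giving $K_i \subseteq \Lambda K_0$ and hence $\Lambda K_1 = \Lambda K_0 = \Lambda K_2$. Finally $(c)\Rightarrow(b)$ is immediate from part (1): $\Lambda K_1 = \Lambda K_2$ gives $K_2 \subset \Lambda K_1$ with $\Lambda K_1/K_2 \cong \Lambda/\Lambda_2$ countable. The main obstacle is the coset-by-coset surjectivity argument of part (2) together with the uniform exploitation of ``countable plus compact implies finite''; the rest is careful index bookkeeping.
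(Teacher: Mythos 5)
Your proof is correct and follows essentially the same route as the paper's: part (1) via "countable compact Hausdorff group is finite" applied to the images of $K_i$ in the relevant quotients, part (2) via openness of the finite-index intersection plus the coset argument landing $\lambda s$ in $\Lambda\cap K_2$ (the paper packages this as $K_2=\Lambda_2(K_1\cap K_2)$), and part (3) by the same index bookkeeping, with only a cosmetic difference in (a)$\Rightarrow$(b), where you take closures of finitely many cosets while the paper observes that a dense finite image forces the quotient to be finite.
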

\begin{proof}
1.\ Since $\Lambda K_1 / K_2$ is countable, the image of $K_1$ in $K/K_2$ is a countable compact group, hence a finite group. This means that $K_1 \cap K_2 < K_1$ has finite index. Since $K_2 \subset \Lambda K_1$, also the image of $K_2$ in $K/K_1$ is countable, hence finite. This means that $K_1 \cap K_2 < K_2$ has finite index. So, $K_1$ and $K_2$ are commensurate.

2.\ Assume that $K_1$ and $K_2$ are commensurate, that $K_2 \subset \Lambda K_1$ and that $\Lambda_1$ is dense in $K_1$.
Since $K_1 \cap K_2 < K_1$ has finite index, $K_1 \cap K_2$ is an open subgroup of $K_1$. Because $\Lambda_1$ is dense in $K_1$, we get that $\Lambda_1 \cap (K_1 \cap K_2)$ is dense in $K_1 \cap K_2$. So, $\Lambda_1 \cap \Lambda_2$ is dense in $K_1 \cap K_2$. Also the image of $\Lambda_1$ in $K_1 / (K_1 \cap K_2)$ is dense. But $K_1 / (K_1 \cap K_2)$ is finite and it follows that $K_1 = \Lambda_1 (K_1 \cap K_2)$. Hence also
$\Lambda K_1 = \Lambda (K_1 \cap K_2)$. Since by assumption $K_2 \subset \Lambda K_1$, we get that $K_2 \subset \Lambda (K_1 \cap K_2)$, i.e.\ $K_2 = \Lambda_2 (K_1 \cap K_2)$. We already saw that $\Lambda_1 \cap \Lambda_2$ is dense in $K_1 \cap K_2$ and it follows that $\Lambda_2$ is dense in $K_2$.

3.\ (a) $\Rightarrow$ (b).\ The image of $\Lambda_i$ in $K_i/(K_1 \cap K_2)$ is dense, but also finite because $\Lambda_1 \cap \Lambda_2$ has finite index in $\Lambda_i$. So, $K_i/(K_1 \cap K_2)$ is finite for $i=1,2$, meaning that $K_1$ and $K_2$ are commensurate.

(b) $\Rightarrow$ (c).\ Since $\Lambda_i$ is dense in $K_i$ and $K_1 \cap K_2 < K_i$ has finite index, we get as in the proof of 2 that $K_i = \Lambda_i(K_1 \cap K_2)$. Hence, $\Lambda K_i = \Lambda (K_1 \cap K_2)$ for $i=1,2$. In particular, $\Lambda K_1 = \Lambda K_2$.

(c) $\Rightarrow$ (b).\ We have $K_2 \subset \Lambda K_1$ and we have that $\Lambda K_1 / K_2 = \Lambda K_2 / K_2$ is countable. So, statement 1 implies that $K_1$ and $K_2$ are commensurate.

(b) $\Rightarrow$ (a).\ Since $K_i / (K_1 \cap K_2)$ is finite, also the image of $\Lambda_i$ in $K_i / (K_1 \cap K_2)$ is finite, meaning that $\Lambda_i / (\Lambda_1 \cap \Lambda_2)$ is a finite group. So, $\Lambda_1$ and $\Lambda_2$ are commensurate.
\end{proof}

\subsubsection*{Proof of Theorem \ref{thm.main}}

\begin{proof}
{\bf Proof of the equivalence of the statements in 1.} Put $\Lambda_i = \Lambda \cap K_i$. Using Lemma \ref{lem.density}, it suffices to prove that $\cC(K_1)$ and $\cC(K_2)$ are unitarily conjugate if and only if $\Lambda_1$ and $\Lambda_2$ are commensurate. First assume that $\Lambda_1$ and $\Lambda_2$ are commensurate. By Lemma \ref{lem.density}, also $K_1$ and $K_2$ are commensurate. So, $K_i < K_1 K_2$ has finite index. It follows that $\rL^\infty(K^\Gamma / K_1 K_2) \ovt \rL (\Lambda_1 \cap \Lambda_2)$ is a subalgebra of finite index in both $\cC(K_1)$ and $\cC(K_2)$. By \cite[Theorem A.1]{Po01} the Cartan subalgebras $\cC(K_1)$ and $\cC(K_2)$ are unitarily conjugate.

Conversely, assume that $\Lambda_1$ and $\Lambda_2$ are not commensurate. Assume for instance that $\Lambda_1 \cap \Lambda_2 < \Lambda_1$ has infinite index. We find a sequence $\lambda_n \in \Lambda_1$ such that for every finite subset $\cF \subset \Lambda$ the element $\lambda_n$ lies outside $\cF \Lambda_2$ eventually. Denote by $u_n := u_{\lambda_n}$ the unitaries in $\cC(K_1)$ that correspond to the group elements $\lambda_n$. Denoting by $E_{\cC(K_2)}$ the trace preserving conditional expectation, one deduces that
$$\|E_{\cC(K_2)}(a u_n b)\|_2 \recht 0 \quad\text{for all}\quad a,b \in M \; .$$
So, by \cite[Theorem A.1]{Po01}, the Cartan subalgebras $\cC(K_1)$ and $\cC(K_2)$ are not unitarily conjugate.

{\bf Proof of the equivalence of the statements in 3.} Denote $X = K^\Gamma$. Assume first that there exists a continuous automorphism $\delta \in \Aut K$ satisfying $\delta(\Lambda K_1) = \Lambda K_2$. Then $K_1' := \delta^{-1}(K_2)$ is a closed subgroup of $K$ with the properties that $K_1' \subset \Lambda K_1$ and that $\Lambda K_1 / K_1'$ is countable. By Lemma \ref{lem.density}.1, we get that $K_1'$ and $K_1$ are commensurate. Then Lemma \ref{lem.density}.2 implies that $\Lambda \cap K_1'$ is dense in $K_1'$. By the equivalence of the statements in 1, we know that the Cartan subalgebras $\cC(K_1')$ and $\cC(K_1)$ are unitarily conjugate. In particular, they give rise to isomorphic equivalence relations. So we may replace $K_1$ by $K_1'$ and assume that $\delta(K_1) = K_2$. By ``applying everywhere'' $\delta$, the actions $(\Gamma \times \Lambda K_i/K_i) \actson X/K_i$ are conjugate, hence orbit equivalent. Also their respective direct products with the hyperfinite equivalence relation are orbit equivalent. By Lemma \ref{lemma.cartan}, the equivalence relations induced by $\cC(K_i)$, $i = 1,2$, are orbit equivalent.

Conversely assume that the equivalence relations induced by $\cC(K_i)$, $i=1,2$, are stably orbit equivalent. Put $G_i := \Gamma \times \Lambda K_i$. We combine Lemmas \ref{lemma.cartan} and \ref{lemma.reduction}, and we replace $K_1$ by a commensurate group that is contained in $\Lambda K_1$. By Lemma \ref{lem.density}.2, the intersection $\Lambda \cap K_1$ remains dense in $K_1$. We have found a measure space isomorphism $\Delta : X/K_1 \recht X/K_2$, an automorphism $\delta_1 \in \Aut \Gamma$ and an isomorphism $\delta_2 : \Lambda K_1 / K_1 \recht \Lambda K_2 / K_2$ such that $\Delta$ is a $(\delta_1 \times \delta_2)$-conjugacy. In particular, $\Delta$ is a $\delta_1$-conjugacy of the $\Gamma$-action. By \cite[Lemma 5.2]{PV06} there exists an isomorphism $\al : K_1 \recht K_2$ and a $(\delta_1 \times \al)$-conjugacy $\Deltab : X \recht X$ satisfying $\Deltab(x) K_2 = \Delta(x K_1)$ for a.e.\ $x \in X$. Since $\Delta$ is a $\delta_2$-conjugacy, it follows that $\Deltab(g \cdot x) \in \Lambda K_2 \cdot \Delta(x)$ for all $g \in \Lambda K_1$ and a.e.\ $x \in X$. We find a measurable map $\om : \Lambda K_1 \times X \recht \Lambda K_2$ such that $\Deltab(g \cdot x) = \om(g,x) \cdot \Deltab(x)$ a.e. Since $\Deltab$ is a $\delta_1$-conjugacy, it follows that $\om(g,\gamma \cdot x) = \om(g,x)$ for all $\gamma \in \Gamma$, $g \in \Lambda K_1$ and a.e.\ $x \in X$. By ergodicity of $\Gamma \actson X$, the map $\om(g,x)$ is essentially independent of the $x$-variable. So we can extend $\al$ to a homomorphism $\al : \Lambda K_1 \recht \Lambda K_2$. By symmetry (i.e.\ considering ${\Deltab}^{-1}$) it follows that $\al$ is an isomorphism of $\Lambda K_1$ onto $\Lambda K_2$. Viewing $K$ as a closed subgroup of the group $\Aut(X)$ of measure preserving automorphisms of $X$ (up to equality almost everywhere), it follows that $\Deltab \Lambda K_1 {\Deltab}^{-1} = \Lambda K_2$. Taking closures we also have $\Deltab K {\Deltab}^{-1} = K$. This means that $\al$ can be extended to a continuous automorphism of $K$. This concludes the proof of the equivalence of the statements in 3.

{\bf Proof of the equivalence of the statements in 2.} Denote $X = K^\Gamma$. If $\delta \in \Aut K$ is a continuous automorphism satisfying $\delta(\Lambda) = \Lambda$ and $\delta(\Lambda K_1) = \Lambda K_2$, we may replace as above $K_1$ by the commensurate $\delta^{-1}(K_2)$ and assume that $\delta(K_1) = K_2$. Then also $\delta(\Lambda_1) = \Lambda_2$. So, ``applying everywhere'' $\delta$ yields an automorphism $\al \in \Aut M$ satisfying $\al(\cC(K_1)) = \cC(K_2)$.

Conversely assume that $\cC(K_1)$ and $\cC(K_2)$ are stably conjugate by an automorphism. Interchanging the roles of $K_1$ and $K_2$ if necessary, we find a projection $p \in \cC(K_2)$ and an isomorphism $\al : M \recht pMp$ satisfying $\al(\cC(K_1)) = \cC(K_2) p$. This $\al$ induces a stable orbit equivalence between the equivalence relations associated with $\cC(K_1)$ and $\cC(K_2)$ respectively. If one of the $K_i$ is finite, the equivalence of the statements in 3 implies that the other one is finite as well and that $\Lambda = \Lambda K_1 = \Lambda K_2$. So we can exclude this trivial case.

We claim that there exist automorphisms $\delta_1 \in \Aut \Gamma$, $\delta \in \Aut K$ and a $(\delta_1 \times \delta)$-conjugacy $\Delta \in \Aut(X)$ of the action $\Gamma \times K \actson X$ such that $\delta(K_1) = K_2$ and such that after a unitary conjugacy of $\al$ and $p$, we have
$$p \in \rL \Lambda_2 \quad\text{and}\quad \al(F) = (F \circ \Delta^{-1}) \, p \quad\text{for all}\;\; F \in \rL^\infty(X/K) \; .$$
To prove this claim, denote $Y_i = \widehat{\Lambda_i}$ and $H_i = \widehat{K_i}$. Also denote $G_i = \Gamma \times \Lambda K_i$. Lemma \ref{lemma.cartan} describes the equivalence relations associated with $\cC(K_i)$. The isomorphism $\al : M \recht pMp$ therefore induces a stable orbit equivalence $\Pi : X/K_1 \times Y_1 \recht X/K_2 \times Y_2$ between the product actions $G_i /K_i \times H_i \actson X/K_i \times Y_i$. We apply Lemma \ref{lemma.reduction} to $\Pi$. We find in particular a compact subgroup $K'_1 < \Lambda K_1$ that is commensurate with $K_1$. Denote $\Lambda'_1 := \Lambda \cap K'_1$, $H'_1 := \widehat{K'_1}$ and $Y'_1 := \widehat{\Lambda'_1}$. By Lemma \ref{lem.density}.2, $\Lambda_1'$ is dense in $K_1'$. By the equivalence of the statements in 1, we know that the Cartan subalgebras $\cC(K'_1)$ and $\cC(K_1)$ are unitarily conjugate. The corresponding orbit equivalence between the product actions $G_1/K'_1 \times H'_1 \actson X/K'_1 \times Y'_1$ and $G_1/K_1 \times H_1 \actson X/K_1 \times Y_1$ is similar with $\Delta_1 \times \Delta_2$ where $\Delta_1,\Delta_2$ are the natural stable orbit equivalences (whose compression constants are each other's inverse). So replacing $\cC(K_1)$ by $\cC(K'_1)$ amounts to replacing $\Pi$ by $\Pi \circ (\Delta_1 \times \Delta_2)$. Lemma \ref{lemma.reduction} says that $\Pi \circ (\Delta_1 \times \id)$ is similar to a stable orbit equivalence of a very special form. Then $\Pi \circ (\Delta_1 \times \Delta_2)$ is still of this very special form. So after replacing $\cC(K_1)$ by $\cC(K'_1)$ we find that $\Pi$ is similar to a stable orbit equivalence $\Theta : X/K_1 \times Y_1 \recht X/K_2 \times Y_2$ satisfying the following properties.
\begin{itemize}
\item $\Theta$ is of the form $\Theta(x,y) = (\Theta_y(x),\Psi(y))$ where $\Psi : Y_1 \recht Y_2$ is a stable orbit equivalence between the actions $H_i \actson Y_i$ and where $(\Theta_y)_{y \in Y_1}$ is a measurable family of measure preserving conjugacies between $G_1/K_1 \actson X/K_1$ and $G_2/K_2 \actson X/K_2$.
\item There exist automorphisms $\delta_1 \in \Aut \Gamma$, $\delta \in \Aut K$ and a $(\delta_1 \times \delta)$-conjugacy $\Delta \in \Aut(X)$ such that $\delta(K_1) = K_2$ and $\Theta_y(x)K = \Delta(xK)$ for a.e.\ $(x,y) \in X/K_1 \times Y_1$.
\end{itemize}
To deduce the precise form of $\Delta$ in the last item, we proceed as in the proof above of the equivalence of the statements in 3. The compression constant of $\Psi$ is equal to the compression constant of $\Pi$ and hence equal to $\tau(p)$. So replacing $\Psi$ by a similar stable orbit equivalence, we may assume that $\Psi$ is a measure space isomorphism (scaling the measure by the factor $\tau(p)$) of $Y_1$ onto a measurable subset $\cU \subset Y_2$ of measure $\tau(p)$. Keeping $\Theta_y(x)$ unchanged, the map $\Theta$ then becomes a measure space isomorphism of $X/K_1 \times Y_1$ onto $X/K_2 \times \cU$. Define the projection $p \in \rL \Lambda_2 = \rL^\infty(Y_2)$ by $p = \chi_\cU$. Since $\Pi$ and $\Theta$ are similar stable orbit equivalences and $\Theta$ is moreover a measure space isomorphism, we can unitarily conjugate $\al$ so that $\al_{|\cC(K_1)} = \Theta_*$. This proves the claim above.

It remains to prove that $\delta(\Lambda) = \Lambda$.

Since every automorphism $\delta_1 \in \Aut \Gamma$ defines a natural automorphism of $M$ that globally preserves all the Cartan subalgebras $\cC(K_1)$, we may assume that $\delta_1 = \id$. For $g \in \Gamma \times \Lambda$, we denote by $u_g \in M = \rL^\infty(X) \rtimes (\Gamma \times \Lambda)$ the canonical unitaries. Since the relative commutant of $\rL^\infty(X/K)$ inside $M$ equals $\rL^\infty(X) \rtimes \Lambda$, it follows that
$$\al(u_g) = \om_g u_g p \quad\text{for all $g \in \Gamma$, where}\quad \om_g \in \cU(p(\rL^\infty(X) \rtimes \Lambda)p) \; .$$
Denote by $(\si_g)_{g \in \Gamma}$ the action of $\Gamma$ on $\rL^\infty(X) \rtimes \Lambda$, implemented by $\Ad u_g$ and corresponding to the Bernoulli action on $\rL^\infty(X)$ and the trivial action on $\rL \Lambda$. Since $p$ is invariant under $(\si_g)_{g \in \Gamma}$ we can restrict $\si_g$ to $p(\rL^\infty(X) \rtimes \Lambda)p$. Note that
$$\om_{gh} = \om_g \, \si_g(\om_h) \quad\text{for all}\;\; g,h \in \Gamma \; .$$
Denote $N^\infty := \B(\ell^2(\N)) \ovt N$ whenever $N$ is a von Neumann algebra.
By Theorem \ref{thm.cocycle} there exists a projection $q \in (\rL \Lambda)^\infty$ with $(\Tr \ot \tau)(q) = \tau(p)$, a partial isometry $v \in \B(\C,\ell^2(\N)) \ovt (\rL^\infty(X) \rtimes \Lambda)$ and a group homomorphism $\gamma : \Gamma \recht \cU(q (\rL \Lambda)^\infty q)$ satisfying
$$v^* v = p \;\; , \;\; vv^* = q \quad\text{and}\quad \om_g = v^* \, \gamma_g \, \si_g(v) \;\;\text{for all}\;\; g \in \Gamma \; .$$
We replace $\al$ by $\Ad v \circ \al$. Then $\al : M \recht q M^\infty q$. Note that $\rL^\infty(X/K)$ commutes with $v$ so that
$$\al(F) = (F \circ \Delta^{-1}) q \;\;\text{for all}\;\; F \in \rL^\infty(X/K) \quad\text{and}\quad \al(u_g) = \gamma_g u_g = u_g \gamma_g \;\;\text{for all}\;\; g \in \Gamma \; .$$
Denote $\Delta_* \in \Aut(\rL^\infty(X))$ given by $\Delta_*(F) = F \circ \Delta^{-1}$ and note that $\Delta_*(\rL^\infty(X/K)) = \rL^\infty(X/K)$. Also $\al(F) = \Delta_*(F) q$ whenever $F \in \rL^\infty(X/K)$.

Recall that $X = K^\Gamma$. Whenever $\om \in \widehat{K}$, denote by $U_\om \in \cU(\rL^\infty(X))$ the unitary given by $U_\om(x) = \om(x_e)$. Fix $\om \in \widehat{K}$ and $g \in \Gamma$. Observe that $U_\om \, u_g U_\om^* u_g^*$ belongs to $\rL^\infty(X/K)$. Hence, since $\Delta$ is an $\id$-conjugacy for $\Gamma \actson X$, we have
\begin{align*}
\al(U_\om \, u_g U_\om^* u_g^*) &= \Delta_*(U_\om \, \si_g(U_\om^*)) q = \Delta_*(U_\om) \,  \si_g(\Delta_*(U_\om^*)) \, q \\ &= \Delta_*(U_\om) \, u_g \, \Delta_*(U_\om)^* \, u_g^* \, q \; .
\end{align*}
On the other hand
$$\al(U_\om \, u_g U_\om^* u_g^*) = \al(U_\om) \, u_g \, \gamma_g \, \al(U_\om)^* \, \gamma_g^* u_g^* \; .$$
Combining both formulae we conclude that
\begin{equation}\label{eq.useful}
u_g^* \; \Delta_*(U_\om)^* \, \al(U_\om) \; u_g = \Delta_*(U_\om)^* \, \gamma_g \, \al(U_\om) \, \gamma_g^* \quad\text{for all}\;\; \om \in \widehat{K} \; , \; g \in \Gamma \; .
\end{equation}
We claim that for all $\om \in \widehat{K}$, $\Delta_*(U_\om)^* \, \al(U_\om) \in (\rL \Lambda)^\infty$. To prove this claim, fix $\om \in \widehat{K}$ and put $a := \Delta_*(U_\om)^* \, \al(U_\om)$. Define for every finite subset $\cF \subset \Gamma$, the von Neumann subalgebra $N_\cF \subset (\rL^\infty(X) \rtimes \Lambda)^\infty$ given by
$$N_\cF := (\rL^\infty(K^\cF) \rtimes \Lambda)^\infty \; .$$
Denote by $E_\cF$ the trace preserving conditional expectation of $(\rL^\infty(X) \rtimes \Lambda)^\infty$ onto $N_\cF$. Note that $(\rL \Lambda)^\infty \subset N_\cF$ for all $\cF \subset \Gamma$. Choose $\eps > 0$ and denote by $\|\,\cdot\,\|_2$ the $2$-norm on $(\rL^\infty(X) \rtimes \Lambda)^\infty$ given by the semi-finite trace $\Tr \ot \tau$. Since $\al(U_\om)$ commutes with $\rL^\infty(X/K)q$, it follows that $\al(U_\om) \in q(\rL^\infty(X) \rtimes \Lambda)^\infty q$.
So we can take a large enough finite subset $\cF \subset \Gamma$ such that
$$\|\al(U_\om) - E_\cF(\al(U_\om))\|_2 < \eps \quad\text{and}\quad \|q \Delta_*(U_\om) - E_\cF(q \Delta_*(U_\om))\|_2 < \eps \; .$$
It follows that for all $g \in \Gamma$, the element $\Delta_*(U_\om)^* \, \gamma_g \, \al(U_\om) \, \gamma_g^*$ lies at distance at most $2 \eps$ from $N_\cF$. By \eqref{eq.useful}, the element $a$ then lies at distance at most $2 \eps$ from $N_{\cF g^{-1}}$. Since $a$ also lies at distance at most $2 \eps$ from $N_\cF$, we conclude that $a$ lies at distance at most $4 \eps$ from $N_{\cF \cap  \cF g^{-1}}$ for all $g \in \Gamma$. We can choose $g$ such that $\cF \cap \cF g^{-1} = \emptyset$ and conclude that $a$ lies at distance at most $4 \eps$ from $(\rL \Lambda)^\infty$. Since $\eps > 0$ is arbitrary, the claim follows.

Put $V_\om := \Delta_*(U_\om)^* \, \al(U_\om)$ and $q_\om := V_\om V_\om^*$. Denote by $z \in \rL \Lambda$ the central support of $q$ in $(\rL \Lambda)^\infty$. Define
$$q_1 = \bigvee_{\om \in \widehat{K}} V_\om V_\om^* \; .$$
Since $V_\om \in (\rL \Lambda)^\infty$ and $V_\om^* V_\om = q$, it follows that $q_1 \in (\rL \Lambda)^\infty$ and $q_1 \leq z$. Since
$$V_\om V_\om^* = \Delta_*(U_\om)^* \, q \, \Delta_*(U_\om)$$
it also follows that $q_1$ commutes with all the unitaries $\Delta_*(U_\om)$, $\om \in \widehat{K}$. Being an element of $(\rL \Lambda)^\infty$, the projection $q_1$ certainly commutes with $\rL^\infty(X/K) = \Delta_*(\rL^\infty(X/K))$. Hence, $q_1$ commutes with the whole of $\Delta_*(\rL^\infty(X)) = \rL^\infty(X)$. So, $q_1 \in \B(\ell^2(\N)) \ot 1$. Since $q_1 \leq z$, it follows that $z = 1$. Because $(\Tr \ot \tau)(q) = \tau(p) \leq 1$, we must have $p = 1$ and find an element
$w \in \B(\ell^2(\N),\C) \ovt \rL \Lambda$ satisfying $w w^* = 1$ and $w^* w = q$. Replacing $\al$ by $\Ad w \circ \al$, we have found that $\al$ is an automorphism of $M$ satisfying
\begin{align*}
& \al(F) = \Delta_*(F) \;\;\text{for all $F \in \rL^\infty(X/K)$}\;\; , \;\; \al(U_\om) = \Delta_*(U_\om) \, V_\om \;\;\text{for all $\om \in \widehat{K}$, and} \\ & \al(u_g) = \gamma_g u_g \;\;\text{for all $g \in \Gamma$.}
\end{align*}
Here $V_\om \in \rL \Lambda$ are unitaries and $g \mapsto \gamma_g$ is a homomorphism from $\Gamma$ into the abelian group $\cU(\rL \Lambda)$. Since $[\Gamma,\Gamma] = \Gamma$, we actually have that $\gamma_g = 1$ for all $g \in \Gamma$. In particular $\al(\rL \Gamma) = \rL \Gamma$. Taking the relative commutant, it follows that $\al(\rL \Lambda) = \rL \Lambda$.

Since $(\Ad U_\om)(u_s) = \om(s) \, u_s$ for all $s \in \Lambda$, $\om \in \widehat{K}$, we also have that $(\Ad \al(U_\om))(\al(u_s)) = \om(s) \, \al(u_s)$ for all $s \in \Lambda$, $\om \in \widehat{K}$. Since $\al(U_\om) = \Delta_*(U_\om) \, V_\om$ and $V_\om$ belongs to the abelian algebra $\rL \Lambda$, we conclude that $(\Ad \Delta_*(U_\om))(\al(u_s)) = \om(s) \, \al(u_s)$ for all $s \in \Lambda$, $\om \in \widehat{K}$. On the other hand, since $\Delta$ is a $\delta$-conjugation for the action $K \actson X$, we also have that $(\Ad \Delta_*(U_\om))(u_k) = \om(\delta^{-1}(k)) \, u_k$ for all $k \in \Lambda$ and $\om \in \widehat{K}$. Writing the Fourier decomposition
$$\al(u_s) = \sum_{k \in \Lambda} \lambda^s_k \, u_k \quad\text{with}\;\; \lambda^s_k \in \C \; ,$$
we conclude that
$$\om(s) \, \lambda^s_k = \om(\delta^{-1}(k)) \, \lambda^s_k \quad\text{for all $s,k \in \Lambda$, $\om \in \widehat{K}$.}$$
So, for every $s \in \Lambda$ there is precisely one $k \in \Lambda$ with $\lambda^s_k \neq 0$ and this element $k \in \Lambda$ moreover satisfies $\om(s) = \om(\delta^{-1}(k))$ for all $\om \in \widehat{K}$. So, $k = \delta(s)$. In particular $\delta(s) \in \Lambda$ for all $s \in \Lambda$. So, $\delta(\Lambda) \subset \Lambda$. Since we can make a similar reasoning on $\al^{-1}(u_s)$, $s \in \Lambda$, it follows that $\delta(\Lambda) = \Lambda$.
\end{proof}

\section{A non commutative cocycle superrigidity theorem}

We prove the following twisted version of Popa's cocycle superrigidity theorem \cite[Theorem 0.1]{Po05} for Bernoulli actions of property (T) groups.

\begin{theorem}\label{thm.cocycle}
Let $K$ be a compact group with countable subgroup $\Lambda < K$. Let $\Gamma$ be a property (T) group. Put $X = K^\Gamma$ and denote by $\Lambda \actson X$ the action by diagonal translation. Put $N = \rL^\infty(X) \rtimes \Lambda$. Denote by $(\si_g)_{g \in \Gamma}$ the action of $\Gamma$ on $N$ such that $\si_g$ is the Bernoulli shift on $\rL^\infty(X)$ and the identity on $\rL \Lambda$. Let $p \in \rL \Lambda$ be a non zero projection.

\begin{itemize}
\item Assume that $q \in \B(\ell^2(\N)) \ovt \rL \Lambda$ is a projection, that $\gamma : \Gamma \recht \cU(q(\B(\ell^2(\N)) \ovt \rL \Lambda) q)$ is a group homomorphism and that $v \in \B(\C,\ell^2(\N)) \ovt N$ is a partial isometry satisfying $v^* v = p$ and $v v^* = q$. Then the formula
    $$\om_g := v^* \, \gamma_g \, \si_g(v)$$
    defines a $1$-cocycle for the action $(\si_g)_{g \in \Gamma}$ on $p N p$, i.e.\ a family of unitaries satisfying $\om_{gh} = \om_g \, \si_g(\om_h)$ for all $g,h \in \Gamma$.

\item Conversely, every $1$-cocycle for the action $(\si_g)_{g \in \Gamma}$ on $p N p$ is of the above form with $\gamma$ being uniquely determined up to unitary conjugacy in $\B(\ell^2(\N)) \ovt \rL \Lambda$.
\end{itemize}
\end{theorem}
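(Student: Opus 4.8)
The first assertion is a direct cocycle computation: with $\om_g := v^*\gamma_g\,\si_g(v)$ one has $\si_g(\gamma_h)=\gamma_h$ and $\si_g(q)=q$, because $q,\gamma_h\in\B(\ell^2(\N))\ovt\rL\Lambda$ and $\si_g$ is the identity on $\rL\Lambda$, so that
$\om_g\,\si_g(\om_h)=v^*\gamma_g\,\si_g(vv^*)\,\gamma_h\,\si_{gh}(v)=v^*\gamma_g\gamma_h\,\si_{gh}(v)=\om_{gh}$, while $v^*v=p$, $vv^*=q$ and $\si_g(p)=p$ give that each $\om_g$ is a unitary in $pNp$. The real content is the converse, and the plan is to adapt Popa's s-malleable deformation argument to this \emph{twisted} situation, in which the fixed-point algebra $N^\si=\rL\Lambda$ is glued to the Bernoulli space through the crossed product rather than sitting in a tensor factor.

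The key reformulation is that, since $p\in\rL\Lambda$ is $\si$-invariant, a $1$-cocycle $(\om_g)$ for $\si$ on $pNp$ is nothing but a group homomorphism $g\mapsto\tilde u_g:=\om_g u_g$ of $\Gamma$ into $\cU(pMp)$, where $M=N\rtimes_\si\Gamma$ and the $u_g$ are the canonical unitaries: the relation $\om_{gh}=\om_g\si_g(\om_h)$ is exactly $\tilde u_g\tilde u_h=\tilde u_{gh}$. Untwisting $(\om_g)$ then means conjugating this modified copy of $\Gamma$, by a partial isometry $v$, back to $\gamma_g u_g$ with $\gamma_g$ in a corner of $\B(\ell^2(\N))\ovt\rL\Lambda$. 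First I would construct an s-malleable deformation $(\al_t)_{t\in\R}$ of the Bernoulli factor $\rL^\infty(K^\Gamma)$ that (i) fixes $\rL\Lambda$ pointwise, (ii) commutes with $\si$, and crucially (iii) is equivariant for the diagonal $\Lambda$-translation, so that it extends to $N$ and to $M$. Passing to the ``difference'' coordinates $(x,y)\mapsto(x,yx^{-1})$ on each doubled copy $K\times K$ turns diagonal $\Lambda$-translation into translation in the first variable alone; the standard Popa deformation, carried out in the second (translation-invariant) variable, is then automatically $\Lambda$-equivariant and comes with its s-malleable symmetry $\beta$.

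With the deformation in hand I would run the usual transversality plus property (T) argument. Property (T) upgrades the almost-invariance of the $\tilde u_g$ under $\al_t$ to uniform closeness, i.e.\ $\sup_{g\in\Gamma}\|\al_t(\om_g)-\om_g\|_2\recht 0$ as $t\recht 0$; the symmetry $\beta$ propagates this from small $t$ to $t=1$, so that after conjugating by a single unitary $w$ the cocycle is fixed by the flip $\al_1$. Combined with the mixing behaviour of the Bernoulli action, flip-invariance forces the values into the $\si$-fixed algebra, and I conclude that $(\om_g)$ is cohomologous to a cocycle with values in $N^\si=\rL\Lambda$. On $\rL\Lambda$ the action $\si$ is trivial, so $\om_{gh}=\om_g\om_h$ and the cocycle is a homomorphism; absorbing the cohomologising partial isometry, and amplifying by $\B(\ell^2(\N))$ to accommodate the change of projection, yields precisely $\om_g=v^*\gamma_g\si_g(v)$ with $\gamma:\Gamma\recht\cU\bigl(q(\B(\ell^2(\N))\ovt\rL\Lambda)q\bigr)$. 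Uniqueness of $\gamma$ up to unitary conjugacy then follows by a standard intertwining argument: if two such data produce cohomologous cocycles, the intertwiner commutes with $(\si_g)$ in the appropriate sense and, by weak mixing of the Bernoulli action, must lie in $\B(\ell^2(\N))\ovt\rL\Lambda$, where it conjugates one homomorphism onto the other.

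The main obstacle I anticipate is exactly what makes this a genuinely new statement rather than a citation of \cite[Theorem 0.1]{Po05}: the coefficient algebra $\rL\Lambda$ does not split off as a tensor factor but is entangled with the Bernoulli space via the diagonal $\Lambda$-action. Both the construction of a $\Lambda$-equivariant deformation and the bookkeeping in the spectral-gap and untwisting steps must therefore be performed inside the crossed product $N=\rL^\infty(K^\Gamma)\rtimes\Lambda$, keeping track of the $\Lambda$-Fourier components of each $\om_g$ throughout. Checking that the transversality estimates survive this entanglement, and that the final untwisting genuinely lands in $\rL\Lambda$ rather than in some larger $\si$-invariant subalgebra, is where the technical work will concentrate.
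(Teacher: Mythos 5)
Your first bullet is correct (and is exactly the paper's one‑line verification), and your instinct that the diagonal $\Lambda$-translation can be disentangled by a change of variables is the right algebraic idea. But the converse direction has a genuine gap, located precisely at its endgame. You claim that the deformation/property (T)/weak‑mixing argument shows $(\om_g)$ is cohomologous to a cocycle with values in $N^\si=\rL\Lambda$, and that one then absorbs the cohomologising partial isometry and ``amplifies by $\B(\ell^2(\N))$ to accommodate the change of projection''. As stated this intermediate claim is false in general, and the amplification cannot be appended afterwards: when $\Lambda$ is not icc, a $1$-cocycle on $pNp$ need not be untwisted by any partial isometry $w$ over $N$ whose left support lies in $\rL\Lambda$ itself. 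This is exactly the point of the remark following the theorem: the projection $q$ is equivalent to $e_{11}\ot p$ in $\B(\ell^2(\N))\ovt N$ but in general \emph{not} in $\B(\ell^2(\N))\ovt\rL\Lambda$. In the paper the projection $q$, and with it the amplification, is \emph{produced by} the untwisting argument: after Popa's theorem is applied one only has $\om_g=w^*\rho_g\si_g(w)$ with $w\in\cU(p\cN p)$, $\cN=\rL^\infty(X\times K)\rtimes\Lambda$, and $\rho_g\in\cU(pPp)$, $P=\rL^\infty(K)\rtimes\Lambda$ — values in $P$, not in $\rL\Lambda$, and $w$ outside $N$. The conversion is the basic‑construction argument: the projection $T=we_Nw^*$ lies in $\langle P,e_{\rL\Lambda}\rangle$, commutes with $\rho_g$, and its image is a right $\rL\Lambda$-module of dimension $\tau(p)$ which decomposes as $\bigoplus_n q_n\rL^2(\rL\Lambda)$; this module's central‑trace‑valued dimension can exceed $1$ on part of $\rZ(\rL\Lambda)$, so it is not of the form $q'\rL^2(\rL\Lambda)$ for any single projection $q'\in\rL\Lambda$, and this is where $q=\sum_n e_{nn}\ot q_n$ and $\gamma$ come from. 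Your sketch contains no counterpart of this step; ``flip invariance plus mixing forces the values into $\rL\Lambda$'' asserts a statement that the non‑icc examples contradict.

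There is a second, more technical gap: the $\Lambda$-equivariant s‑malleable deformation you posit does not exist for the compact groups $K$ the theorem must cover. When $\Lambda$ is dense in $K$ (the case of all the paper's applications), a measure‑space automorphism of $K\times K$ commuting with $\Lambda$-translation in the first difference coordinate necessarily has the form $(u,v)\mapsto(u\,c(v),\psi(v))$, so a malleable flow requires a measurable family $c_t:K\recht K$ with $c_sc_t=c_{s+t}$ and $c_1=\id$, i.e.\ a measurable one‑parameter subgroup through \emph{every} point of $K$. By automatic continuity of measurable homomorphisms $\R\recht K$ this forces $K$ to be arcwise connected; it fails for finite $K$, for $K=\prod_p\Z/p\Z$, for solenoids, and in particular for the compactification of $\Q^{(\infty)}$ used in Theorem \ref{thm.ex}. (Popa handles such bases by an auxiliary diffuse base and a quotient argument; you would have to redo all of that equivariantly.) Note that the paper sidesteps both problems at once: essentially your change of variables, in the form $(x,k)\mapsto(k\cdot x,k)$ on $X\times K$, is used not to build a deformation but to identify $\cN\cong\rL^\infty(X)\ovt P$ equivariantly, turning the twisted cocycle into an ordinary cocycle for the plain Bernoulli action $\Gamma\actson X$ with values in the $\Ufin$ group $\cU(pPp)$, to which \cite[Theorem 0.1]{Po05} applies as a black box; all the genuinely new work then goes into the conversion from $P$ to $(\rL\Lambda)^\infty$ described above, which is the part your proposal is missing.
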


Note that if $\Lambda$ is icc, we can take $q = p$, i.e.\ every $1$-cocycle for the action $(\si_g)_{g \in \Gamma}$ on $p N p$ is cohomologous with a homomorphism from $\Gamma$ to $\cU(p \rL (\Lambda)p)$. If $\Lambda$ is not icc, this is no longer true since $q$ and $e_{11} \ot p$ need not be equivalent projections in $\B(\ell^2(\N)) \ovt \rL \Lambda$, although they are equivalent in $\B(\ell^2(\N)) \ovt N$.

\begin{proof}
It is clear that the formulae in the theorem define $1$-cocycles. Conversely, let $p \in \rL \Lambda$ be a projection and assume that the unitaries $\om_g \in pNp$ define a $1$-cocycle for the action $(\si_g)_{g \in \Gamma}$ of $\Gamma$ on $pNp$. Consider the diagonal translation action $\Lambda \actson X \times K$ and put $\cN := \rL^\infty(X \times K) \rtimes \Lambda$. We embed $N \subset \cN$ by identifying the element $F u_\lambda \in N$ with the element $(F \ot 1)u_\lambda \in \cN$ whenever $F \in \rL^\infty(X), \lambda \in \Lambda$. Also $(\si_g)_{g \in \Gamma}$ extends naturally to a group of automorphisms of $\cN$ with $\si_g(1 \ot F) = 1 \ot F$ for all $F \in \rL^\infty(K)$. Define $P = \rL^\infty(K) \rtimes \Lambda$ and view $P$ as a subalgebra of $\cN$ by identifying the element $Fu_\lambda \in P$ with the element $(1 \ot F)u_\lambda \in \cN$ whenever $F \in \rL^\infty(K), \lambda \in \Lambda$ . Note that we obtained a commuting square
$$
\begin{matrix}
N & \subset & \cN\;\;\;\mbox{} \\
\cup & & \cup\;\;\;\mbox{} \\
\rL \Lambda & \subset & P\;\;.
\end{matrix}
$$
Define $\Delta : X \times K \recht X \times K : \Delta(x,k) = (k \cdot x, k)$ and denote by $\Delta_*$ the corresponding automorphism of $\rL^\infty(X \times K)$ given by $\Delta_*(F) = F \circ \Delta^{-1}$. One checks easily that the formula
$$\Psi : \rL^\infty(X) \ovt P \recht \cN : \Psi(F \ot G u_\lambda) = \Delta_*(F \ot G) u_\lambda \quad\text{for all}\;\; F \in \rL^\infty(X) , G \in \rL^\infty(K), \lambda \in \Lambda$$
defines a $*$-isomorphism satisfying $\Psi \circ (\si_g \ot \id) = \si_g \circ \Psi$ for all $g \in \Gamma$. Put $\mu_g := \Psi^{-1}(\om_g)$. It follows that $(\mu_g)_{g \in \Gamma}$ is a $1$-cocycle for the action $\Gamma \actson X$ with values in the Polish group $\cU(pPp)$. By Popa's cocycle superrigidity theorem \cite[Theorem 0.1]{Po05} and directly applying $\Psi$ again, we find a unitary $w \in \cU(p\cN p)$ and a group homomorphism $\rho : \Gamma \recht \cU(p P p)$ such that
$$\om_g = w^* \, \rho_g \, \si_g(w) \quad\text{for all}\;\; g \in \Gamma \; .$$
Consider the basic construction for the inclusion $N \subset \cN$ denoted by $\cN_1 := \langle \cN , e_N \rangle$. Put $T := w e_N w^*$. Since $\si_g(w) = \rho_g^* \, w \, \om_g$, it follows that $\si_g(T) = \rho_g^* \, T \, \rho_g$. Also note that $T \in \rL^2(\cN_1)$, that $T = p T$ and that $\Tr(T) = \tau(p)$.

Since we are dealing with a commuting square, we can identify the basic construction $P_1 := \langle P,e_{\rL \Lambda} \rangle$ for the inclusion $\rL \Lambda \subset P$ with the von Neumann subalgebra of $\cN_1$ generated by $P$ and $e_N$. In this way, one checks that there is a unique unitary
$$W : \rL^2(\cN_1) \recht \rL^2(X) \ot \rL^2(P_1) \quad\text{satisfying}\;\; W(ab) = a \ot b \quad\text{for all}\;\; a \in \rL^\infty(X), b \in \rL^2(P_1) \; .$$
The unitary $W$ satisfies $W(\rho_g \, \si_g(\xi) \, \rho_g^*) = (\si_g \ot \Ad \rho_g)(W(\xi))$ for all $g \in \Gamma$, $\xi \in \rL^2(\cN_1)$. The projection $T \in \rL^2(\cN_1)$ satisfies $T = \rho_g \, \si_g(T) \, \rho_g^*$. Since the unitary representation $(\si_g)_{g \in \Gamma}$ on $\rL^2(X) \ominus \C 1$ is a multiple of the regular representation, it follows that $W(T) \in 1 \ot \rL^2(P_1)$. This means that $T \in P_1$ and $\rho_g \, T = T \, \rho_g$ for all $g \in \Gamma$.

So we can view $T$ as the orthogonal projection of $\rL^2(P)$ onto a right $\rL \Lambda$ submodule of dimension $\tau(p)$ that is globally invariant under left multiplication by $\rho_g$, $g \in \Gamma$. Since $p T = T$, the image of $T$ is contained in $p \rL^2(P)$. Take projections $q_n \in \rL \Lambda$ with $\sum_n \tau(q_n) = \tau(p)$ and a right $\rL \Lambda$-linear isometry
$$\Theta : \bigoplus_{n \in \N} q_n \rL^2(\rL \Lambda) \recht \rL^2(P)$$
onto the image of $T$. Denote by $w_n \in \rL^2(P)$ the image under $\Theta$ of $q_n$ sitting in position $n$. Note that
$$w_n = p w_n \;\; , \;\; E_{\rL \Lambda}(w_n^* w_m) = \delta_{n,m} q_n \quad\text{and}\quad T = \sum_{n \in \N} w_n e_{\rL \Lambda} w_n^*$$
where in the last formula we view $T$ as an element of $P_1$. Identifying $P_1$ as above with the von Neumann subalgebra of $\cN_1$ generated by $P$ and $e_N$, it follows that
$$T = \sum_{n \in \N} w_n e_N w_n^* \; .$$
Since $T = w e_N w^*$, we get that $e_N = \sum_n w^* w_n e_N w_n^* w$. We conclude that $w^* w_n \in \rL^2(N)$ for all $n$. So $w_n^* w_m \in \rL^1(N)$ for all $n,m$. Since also $w_n^* w_m \in \rL^1(P)$, we have $w_n^* w_m \in \rL^1(\rL \Lambda)$. But then,
$$\delta_{n,m} q_n = E_{\rL \Lambda}(w_n^* w_m) = w_n^* w_m \; .$$
So, the elements $w_n$ are partial isometries in $P$ with mutually orthogonal left supports lying under $p$ and with right supports equal to $q_n$. Since $\sum_n \tau(q_n) = \tau(p)$, we conclude that the formula
$$W := \sum_n e_{1,n} \ot w_n$$
defines an element $W \in \B(\ell^2(\N),\C) \ovt P$ satisfying $WW^* = p$, $W^* W = q$ where $q \in (\rL \Lambda)^\infty = \B(\ell^2(\N)) \ovt \rL \Lambda$ is the projection given by $q := \sum_n e_{nn} \ot q_n$. We also have that $v := W^* w$ belongs to $\B(\C 1,\ell^2(\N)) \ovt N$ and satisfies $v^* v = p$, $v v^* = q$.

Finally, let $\gamma : \Gamma \recht \cU(q (\rL \Lambda)^\infty q)$ be the unique group homomorphism satisfying
$$\Theta(\gamma_g \xi) = \rho_g \Theta(\xi) \quad\text{for all}\;\; g \in \Gamma , \xi \in \bigoplus_n q_n \rL^2(\rL \Lambda) \; .$$
By construction $W \gamma_g = \rho_g W$ for all $g \in \Gamma$. Since $\om_g = w^* \, \rho_g \, \si_g(w)$ and since $W$ is invariant under $(\si_g)_{g \in \Gamma}$, we conclude that $\om_g = v^* \, \gamma_g \, \si_g(v)$.

We finally prove that $\gamma$ is unique up to unitary conjugacy. So assume that we also have $q_1,v_1$ and $\vphi : \Gamma \recht \cU(q_1 (\rL \Lambda)^\infty q_1)$ satisfying $\om_g = v_1^* \, \vphi_g \, \si_g(v_1)$. It follows that the element $v_1 v^* \in q_1 N^\infty q$ satisfies $\si_g(v_1 v^*) = \vphi_g^* \, v_1 v^* \, \gamma_g$ for all $g \in \Gamma$. As above this forces $v_1 v^*$ to belong to $q_1 (\rL \Lambda)^\infty q$, providing the required unitary conjugacy between $\gamma$ and $\vphi$.
\end{proof}

\section{\boldmath The classification of Cartan subalgebras up to automorphisms can be complete analytic; proof of Theorem \ref{thm.ex}} \label{sec.complete-analytic}

Let $M$ be a II$_1$ factor with separable predual. By \cite[Theorem 3 and its corollaries]{Ef64} the set of von Neumann subalgebras $A \subset M$ carries the standard Effros Borel structure. In this II$_1$ setting it can be described as the smallest $\sigma$-algebra such that for all $x,y \in M$ the map
\begin{equation}\label{eq.generating}
A \mapsto \tau(E_A(x)y)
\end{equation}
is measurable. Here $\tau$ denotes the unique tracial state on $M$ and $E_A$ is the unique trace preserving conditional expectation of $M$ onto $A$.

\begin{proposition}\label{prop.borel}
Let $M$ be a II$_1$ factor with separable predual. Equip the set of its von Neumann subalgebras with the Effros Borel structure, as explained above.
\begin{itemize}
\item The set $\cartan(M)$ of Cartan subalgebras of $M$ is a Borel set and as such a standard Borel space.
\item The equivalence relation of ``being unitarily conjugate'' is a Borel equivalence relation on $\cartan(M)$.
\item The equivalence relation of ``being conjugate by an automorphism of $M$'' is an analytic equivalence relation on $\cartan(M)$.
\end{itemize}
\end{proposition}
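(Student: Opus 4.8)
The plan is to establish each of the three bullet points by exhibiting the relevant set or equivalence relation as the result of applying countably many Borel operations to the generating maps \eqref{eq.generating}, with the existential quantifier over unitaries or automorphisms producing analyticity via projection.

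First I would show that $\cartan(M)$ is Borel. The defining properties of a Cartan subalgebra are: (i) $A$ is abelian, (ii) $A$ is maximal abelian, i.e.\ $A' \cap M = A$, and (iii) the normalizer $\cN_M(A)$ generates $M$. To express these in terms of the Effros Borel structure, I would fix a countable $\|\cdot\|_2$-dense sequence $(x_n)$ in the unit ball of $M$. Commutativity of $A$ can be tested by requiring $\tau\bigl(E_A(x_n) E_A(x_m) y\bigr) = \tau\bigl(E_A(x_m) E_A(x_n) y\bigr)$ for all $n,m$ and all $y$ in a countable dense set, each condition being Borel by \eqref{eq.generating}; here I use that $E_A(x_n)$ is approximable from the data $A \mapsto \tau(E_A(x_n)y)$. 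Maximal abelianness is the condition that every element commuting with all of $A$ already lies in $A$, which can likewise be phrased as a countable family of Borel conditions on the conditional expectation. The regularity condition (iii) is the most delicate: I would encode ``$\cN_M(A)$ generates $M$'' by asking that the $\|\cdot\|_2$-closed span of products of normalizing unitaries be all of $M$; the existence of sufficiently many normalizing unitaries near each $x_n$ is itself an analytic-looking condition, so the main obstacle here is to verify that regularity is genuinely Borel rather than merely analytic. The remedy is to use Popa's conjugacy criterion \cite[Theorem A.1]{Po01} and standard selection theorems to replace the existential quantifier over the (Polish) unitary group by a Borel condition, or alternatively to appeal directly to the description of Cartan inclusions via the associated equivalence relation.

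Second, for ``being unitarily conjugate'' I would exploit \cite[Theorem A.1]{Po01}, which characterizes unitary conjugacy of two Cartan subalgebras $A,B$ by the quantitative intertwining condition that $\inf\{\|E_B(a u b)\|_2\}$ stays bounded away from $0$ uniformly, equivalently that $A \embed{M} B$ and $B \embed{M} A$. Each intertwining relation $A \embed{M} B$ is expressible through the maps \eqref{eq.generating} applied to $E_B$ evaluated on a dense sequence, and crucially Popa's criterion removes the existential quantifier over $\cU(M)$, so the relation is Borel rather than merely analytic. The key point is that for Cartan subalgebras, intertwining is equivalent to genuine unitary conjugacy, so no projection over a Polish group survives in the final description.

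Third, ``being conjugate by an automorphism'' is $\{(A,B) : \exists\, \al \in \Aut M,\ \al(A) = B\}$. I would put a standard Borel (indeed Polish) structure on $\Aut M$ via its action on the countable dense sequence $(x_n)$, check that the map $(\al, A) \mapsto \al(A)$ is Borel into the Effros Borel space, and then observe that the relation is the projection along the $\Aut M$ coordinate of the Borel set $\{(\al,A,B) : \al(A) = B\}$. A projection of a Borel set along a standard Borel (Polish) factor is analytic, giving the third bullet. I expect the main obstacle throughout to be the Borelness of the regularity condition in bullet one; once regularity is handled, bullets two and three follow routinely from Popa's criterion and the projection theorem for analytic sets respectively.
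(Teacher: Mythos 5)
Your handling of the second and third bullets is essentially the paper's: unitary conjugacy is decided by combining Popa's criterion \cite[Theorem A.1]{Po01} with the maps \eqref{eq.generating} (the paper fixes a $\|\cdot\|_2$-dense sequence $(u_n)$ in $\cU(M)$ and shows $A,B$ are unitarily conjugate iff $\inf_n \|E_{u_n A u_n^*} - E_B\|_{\infty,2} < \frac{1}{2}$; note this formulation quantifies only over a fixed countable set, whereas your intertwining conditions $A \embed{M} B$, $B \embed{M} A$ involve a quantifier over $\cU(A)$, a set varying with $A$, which would need an extra Borel-selection argument), and conjugacy by an automorphism is analytic exactly as you say, being the orbit equivalence relation of the Borel action $\Aut(M) \times \cartan(M) \recht \cartan(M)$.

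The genuine gap is in the first bullet, precisely at the point you flag as the main obstacle. You correctly identify that Borelness of the regularity condition is the crux, but neither of your proposed remedies works. Popa's criterion \cite[Theorem A.1]{Po01} compares two given Cartan subalgebras and says nothing about whether the normalizer of a single masa generates $M$; and measurable selection theorems cannot ``replace an existential quantifier over a Polish group by a Borel condition''---if they could, every analytic set would be Borel. The paper's actual solution rests on a quantitative input absent from your outline: by \cite[Theorem 6.2 and Lemma 6.3]{PSS03}, for every maximal abelian subalgebra $A \subset M$ and every $u \in \cU(M)$,
$$\tfrac{1}{4}\, \|E_A - E_{uAu^*}\|_{\infty,2} \;\leq\; d(u,\cN_M(A)) \;\leq\; 90\, \|E_A - E_{uAu^*}\|_{\infty,2} \; ,$$
where $d$ denotes the $\|\cdot\|_2$-distance. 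Since $A \mapsto \|E_A - E_{uAu^*}\|_{\infty,2}$ is Borel (a supremum of countably many functions of the form \eqref{eq.generating}), this inequality makes $A \mapsto d(u,\cN_M(A))$ Borel for each fixed $u$, i.e.\ $A \mapsto \cN_M(A)$ is a Borel map from $\masa(M)$ into the Effros Borel space $\subgr(\cU(M))$ of closed subgroups of $\cU(M)$. Combining this with the Borelness of $\cG \mapsto \cG\dpr$ (obtained by repeating the proof of \cite[Theorem 3]{Ef64}), the map $A \mapsto \cN_M(A)\dpr$ is Borel, so $\cartan(M) = \{A \in \masa(M) \mid \cN_M(A)\dpr = M\}$ is Borel. (Your worry about encoding abelianness and maximal abelianness is unnecessary: $A \mapsto A' \cap M$ is Borel by \cite{Ef64}, so $\masa(M)$ is Borel immediately.) Without the displayed inequality, or some substitute for it, your argument for the first bullet does not go through.
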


\begin{proof}
Denote by $\vnalg(M)$ the standard Borel space of von Neumann subalgebras of $M$.
By \cite[Theorem 3 and its corollaries]{Ef64} the map $A \mapsto A' \cap M$ is a Borel map on $\vnalg(M)$. Hence the set
$$\masa(M) = \{A \in \vnalg(M) \mid A' \cap M = A\}$$
of maximal abelian subalgebras of $M$ is a a Borel subset of $\vnalg(M)$.

Consider the Polish group $\cU(M)$ of unitaries in $M$ equipped with the $\|\, \cdot \,\|_2$ distance. Denote by $\subgr(\cU(M))$ the set of closed subgroups of $\cU(M)$ equipped with the Effros Borel structure. This Borel structure can be described as the smallest $\si$-algebra such that for all $u \in \cU(M)$ the map
$$\subgr(\cU(M)) \recht \R : \cG \mapsto d(u,\cG)$$
is measurable. Here and in what follows, $d(u,\cG)$ denotes the $\|\,\cdot\,\|_2$ distance of $u$ to $\cG$. We claim that the map
$$\masa(M) \recht \subgr(\cU(M)) : A \mapsto \cN_M(A) := \{u \in \cU(M) \mid u A u^* = A \}$$
is Borel. So we have to prove that for all $u \in \cU(M)$, the map $A \mapsto d(u,\cN_M(A))$ is Borel.
But \cite[Theorem 6.2 and Lemma 6.3]{PSS03} provides us with the following inequalities for every maximal abelian subalgebra $A \subset M$ and all $u \in \cU(M)$.
\begin{equation*}
\frac{1}{4} \|E_A - E_{uAu^*}\|_{\infty,2} \leq d(u,\cN_M(A)) \leq 90 \|E_A - E_{uAu^*}\|_{\infty,2} \; .
\end{equation*}
So in order to prove that $A \mapsto \cN_M(A)$ is Borel, it suffices to prove that for all $u \in \cU(M)$, the map $A \mapsto \|E_A - E_{uAu^*}\|_{\infty,2}$ is Borel. Choosing sequences $(x_n)$ and $(y_n)$ in $M$ such that $(x_n)$ is dense in the unit ball of $M$ while $(y_n)$ is dense in the unit ball of $\rL^2(M)$, we have that
$$\|E_A - E_{uAu^*}\|_{\infty,2} = \sup_{n,m} |\tau(E_A(x_n) y_m) - \tau(E_A(u^* x_n u) y_m)| \; .$$
Since the maps in \eqref{eq.generating} are Borel, also $A \mapsto \|E_A - E_{uAu^*}\|_{\infty,2}$ is Borel. This proves the claim that $A \mapsto \cN_M(A)$ is a Borel map from $\masa(M)$ to $\subgr(\cU(M))$.

Repeating the proof of \cite[Theorem 3]{Ef64} the map $\subgr(\cU(M)) \recht \vnalg(M) : \cG \mapsto \cG^{\prime\prime}$ is Borel. Together with the previous paragraph it follows that the map $\masa(M) \recht \vnalg(M) : A \mapsto \cN_M(A)^{\prime\prime}$ is Borel. In particular,
$$\cartan(M) = \{A \in \masa(M) \mid \cN_M(A)^{\prime\prime} = M\}$$
is a Borel subset of $\masa(M)$.

Consider the equivalence relation of ``being unitarily conjugate'' on the standard Borel space $\cartan(M)$. Let $(u_n)_{n \in \N}$ be a $\|\,\cdot\,\|_2$ dense sequence in $\cU(M)$. We claim that $A,B \in \cartan(M)$ are unitarily conjugate if and only if
\begin{equation}\label{eq.below12}
\inf_n \|E_{u_n A u_n^*} - E_B\|_{\infty,2} < \frac{1}{2} \; .
\end{equation}
Once this claim is proven, we observe as above that for every $n \in \N$ the map $(A,B) \mapsto \|E_{u_n A u_n^*} - E_B\|_{\infty,2}$ is Borel. It then follows that the unitarily conjugate Cartan subalgebras $(A,B)$ form a Borel subset of $\cartan(M) \times \cartan(M)$.

It remains to prove the claim. If $A$ and $B$ are unitarily conjugate, i.e.\ $u A u^* = B$, it suffices to take $n$ such that $\|u-u_n\|_2$ is small enough and \eqref{eq.below12} follows. Conversely, if \eqref{eq.below12} holds, take $n$ such that $\|E_{u_n A u_n^*} - E_B\|_{\infty,2} < 1/2$. In particular, for every $v \in \cU(u_n A u_n^*)$ we have that $\|E_B(v)\|_2 > 1/2$. Popa's conjugacy criterion \cite[Theorem A.1]{Po01} implies that $u_n A u_n^*$ and $B$ are unitarily conjugate. Hence also $A$ and $B$ are unitarily conjugate.

Finally, repeating the proof of \cite[Lemma 2.1]{Ef65} the map $\Aut(M) \times \cartan(M) \recht \cartan(M) : (\al,A) \mapsto \al(A)$ is seen to be Borel. Since the equivalence relation of ``being conjugate by an automorphism of $M$'' is precisely the orbit equivalence relation of this Borel action, it is an analytic equivalence relation.
\end{proof}

In \cite{DM07} it was shown that the isomorphism relation for torsion free abelian groups is complete analytic. This is done through the construction of a Borel reduction from the complete analytic problem of checking whether a tree has an infinite path. In this context a \emph{tree} is a set of finite sequences of natural numbers which is closed under taking initial segments. An \emph{infinite path} in a tree $T$ is an infinite sequence of natural numbers $x_1x_2\cdots$ such that $x_1\cdots x_n \in T$ for every $n \in \N$. A construction by Hjorth \cite{Hj01} associates to every tree $T$ a subgroup $G(T)$ of the countably infinite direct sum $\Lambda := \Q^{(\infty)}$ such that the isomorphism class of the torsion free abelian group $G(T)$ remembers whether or not $T$ admits an infinite path. We recall this construction in the following lemma and provide a second countable compactification $\Lambda \subset K$ such that $\overline{G(T)} \cap \Lambda = G(T)$ for every tree $T$.

Recall that $\Q/\Z = \bigoplus_{p \in \cP} \Z(p^\infty)$ where $\cP$ denotes the set of prime numbers and $\Z(p^\infty)=\Z[p^{-1}]/\Z$. Denote by $\rho_p$ the corresponding homomorphism $\rho_p: \Q \rightarrow \Z(p^\infty)$ uniquely characterized by $\rho_p(p^{-k}) = p^{-k} + \Z$ and $\rho_p(q^{-k}) = 0$ for all primes $q \neq p$ and all $k \in \N$.

Denote by $\N^{< \N}$ the set of finite sequences of natural numbers. For every $\si = (x_1\cdots x_n)$ denote by $|\si|=n$ the length of the sequence. Observe that the empty sequence $()$ belongs to $\N^{< \N}$ and that it is the unique element of length $0$. Recall that a tree $T$ is a subset $T \subset \N^{< \N}$ that is closed under taking initial segments: if $n \geq 1$ and $(x_1\cdots x_n) \in T$, then $(x_1\cdots x_k) \in T$ for all $k \leq n$. Whenever $\si \in \N^{< \N}$ with $|\si| \geq 1$ we denote by $\si^-$ the sequence obtained by removing the last element of $\si$. Whenever $\si \in \N^{< \N}$ and $b \in \N$ we denote by $\si b$ the sequence obtained by appending $b$ at the end of $\si$.

\begin{lemma}\label{lem.compactQ}
Choose for every prime number $p$ a second countable compactification $L_p$ of $\Z(p^\infty)$. Choose a second countable compactification $K_\Q$ of $\Q$ such that the homomorphisms $\rho_p : \Q \rightarrow L_p$ extend continuously to $K_{\Q}$ for all $p \in \cP$.

Consider the countable group $\Lambda$ given as the direct sum $\Lambda := \Q^{(\N^{< \N})}$.  For every $\si \in \N^{< \N}$ define the homomorphisms
$$\pi_\si: \Lambda \rightarrow \Q: x \mapsto x_\si \quad\text{and}\quad \gamma_\si: \Lambda \rightarrow \Q: x \mapsto x_\si + \sum_{b \in \N} x_{\si b} \; .$$
Define $K$ as the smallest compactification of $\Lambda$ such that the homomorphisms $\pi_\si: \Lambda \rightarrow K_{\Q}$ and $\gamma_\si: \Lambda \rightarrow K_{\Q}$ extend continuously to $K$ for all $\si \in \N^{< \N}$.

For every $\si \in \N^{< \N}$ we denote by $\delta_\si$ the obvious basis element of $\Lambda = \Q^{(\N^{<\N})}$. Let $p_0 < p_1 < \ldots$ be an enumeration of the prime numbers. Following \cite{Hj01} we define for every tree $T$ the subgroup $G(T)$ of $\Lambda$ generated by
$$\bigl\{ \; p_{2|\si|}^{-k} \, \delta_\si \; \big|\; \si \in T, k \in \N \;\bigr\} \;\; \cup \;\; \bigl\{ \; p_{2|\si|-1}^{-k} \, (\delta_\si - \delta_{\si^-}) \;\big|\; \si \in T, |\si|>0, k \in \N \; \bigr\} \; .$$
Denote by $\overline{G(T)}$ the closure of $G(T)$ inside $K$.

For every tree $T$ we have $\overline{G(T)} \cap \Lambda = G(T)$.
\end{lemma}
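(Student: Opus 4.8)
The inclusion $G(T) \subseteq \overline{G(T)} \cap \Lambda$ is immediate, so the whole content is the reverse inclusion. My plan is to exhibit a \emph{closed} subset $\cC \subseteq K$ with $G(T) \subseteq \cC$ and $\cC \cap \Lambda = G(T)$; then $\overline{G(T)} \subseteq \cC$ and hence $\overline{G(T)} \cap \Lambda \subseteq \cC \cap \Lambda = G(T)$. The set $\cC$ will be cut out by the very functions built into the construction of $K$: for each $\si$ the maps $\pi_\si,\gamma_\si$ extend continuously to $K \to K_\Q$ (by definition of $K$), and for each prime $\ell$ the map $\rho_\ell$ extends continuously to $K_\Q \to L_\ell$ (by the choice of $K_\Q$). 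Composing, $\rho_\ell \circ \pi_\si$ and $\rho_\ell \circ \gamma_\si$ are continuous maps $K \to L_\ell$, and since $\Q \hookrightarrow K_\Q$ and $\Z(\ell^\infty) \hookrightarrow L_\ell$ are injective, on $\Lambda$ these recover $x \mapsto \rho_\ell(x_\si)$ and $x \mapsto \rho_\ell(\gamma_\si(x))$, a value in the compact group being $0$ exactly when the corresponding rational datum vanishes.

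The heart of the argument, and the step I expect to be the main obstacle, is an exact arithmetic description of $G(T)$ inside $\Lambda$ in terms of these data. Writing $n = |\si|$, I claim that for $x \in \Lambda$ one has $x \in G(T)$ if and only if: (i) $x_\si = 0$ for every $\si \notin T$; (ii) for every $\si$, $\rho_\ell(x_\si) = 0$ for every prime $\ell \notin \{p_{2n-1},p_{2n},p_{2n+1}\}$; and (iii) for every $\si$, $\rho_{p_{2n+1}}(\gamma_\si(x)) = 0$ (with the convention that at the root, $n=0$, the prime $p_{2n-1}$ and the $B$-term below are simply absent). The forward direction rests on the observation that in the generator expansion $x = \sum_\si A_\si \delta_\si + \sum_\si B_\si(\delta_\si - \delta_{\si^-})$ the coefficients satisfy $A_\si \in \Z[p_{2n}^{-1}]$ and $B_\si \in \Z[p_{2n-1}^{-1}]$, so that $x_\si = A_\si + B_\si - \sum_b B_{\si b}$ has denominator supported only on $p_{2n-1},p_{2n},p_{2n+1}$ (giving (i),(ii)); crucially, $B_{\si b} \in \Z[p_{2n+1}^{-1}]$ contributes the prime-$p_{2n+1}$ part of $x_\si$ with a minus sign and of $x_{\si b}$ with a plus sign, so $\rho_{p_{2n+1}}(x_\si) + \sum_b \rho_{p_{2n+1}}(x_{\si b})$ telescopes to $0$, which is precisely (iii). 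This cancellation is the reason $\gamma_\si$, and not $\pi_\si$ alone, is built into $K$.

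For the converse I would reconstruct the generator expansion from (i)--(iii). Given such $x$, decompose each coordinate as $x_\si = m_\si + e_\si + s_\si + f_\si$ with $m_\si \in \Z$ and $e_\si,s_\si,f_\si$ the representatives in $\Z[p^{-1}] \cap [0,1)$ of $\rho_{p_{2n-1}}(x_\si),\rho_{p_{2n}}(x_\si),\rho_{p_{2n+1}}(x_\si)$ (only these three occur by (ii)). Setting $B_\si := e_\si$ and $A_\si := m_\si + s_\si$, a direct check using (iii) (which forces $f_\si + \sum_b e_{\si b} \in \Z$) gives $x_\si = A_\si + B_\si - \sum_b B_{\si b}$ with $A_\si \in \Z[p_{2n}^{-1}]$ and $B_\si \in \Z[p_{2n-1}^{-1}]$, exhibiting $x$ as an integer combination of the generators of $G(T)$, while (i) guarantees all nonzero coefficients sit at nodes of $T$. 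With the characterization in hand, let $\cC$ be the intersection over all $\si$ and all relevant $\ell$ of the sets $\{\overline{\pi_\si} = 0\}$ (for $\si \notin T$), $\{\rho_\ell \circ \overline{\pi_\si} = 0\}$ (for $\ell \notin \{p_{2n-1},p_{2n},p_{2n+1}\}$) and $\{\rho_{p_{2n+1}} \circ \overline{\gamma_\si} = 0\}$. Each is the preimage of $\{0\}$ under a continuous map into a compact (hence Hausdorff) group, so $\cC$ is closed; the characterization yields exactly $\cC \cap \Lambda = G(T)$, and in particular $G(T) \subseteq \cC$, which completes the argument as outlined.
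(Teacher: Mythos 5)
Your proposal has the same overall architecture as the paper's proof: your conditions (i)--(iii) are exactly the paper's properties 1--3 of elements of $\overline{G(T)}$, and packaging them as a closed set $\cC \subseteq K$ containing $G(T)$ (hence $\overline{G(T)}$) is just a reformulation of the paper's observation that these conditions pass to the closure by continuity. Where you genuinely diverge is in the key step, namely that any $x \in \Lambda$ satisfying (i)--(iii) lies in $G(T)$. The paper argues by contradiction and descent: it takes a counterexample $g$ minimizing the finite set $S_g = \{(p,\si) \mid \rho_p(\pi_\si(g)) \neq 0\}$ and a pair $(p,\si) \in S_g$ with $|\si|$ maximal, and in each of the three cases $p \in \{p_{2|\si|-1},\, p_{2|\si|},\, p_{2|\si|+1}\}$ either subtracts an explicit element of $G(T)$ to shrink $S_g$ or contradicts the maximality of $|\si|$. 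You instead reconstruct a generator expansion in one stroke by partial fractions. Your route is more constructive (it produces an explicit membership criterion and explicit coefficients), at the price of bookkeeping with fractional parts, and that is exactly where your write-up has a slip.

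The slip: with $A_\si := m_\si + s_\si$ and $B_\si := e_\si$, the identity $x_\si = A_\si + B_\si - \sum_b B_{\si b}$ is false in general; the two sides differ by the integer $N_\si := f_\si + \sum_b e_{\si b}$. Concretely, take $x = p_1^{-1}\delta_{()} - p_1^{-1}\delta_{(0)} \in G(T)$: then $f_{()} = p_1^{-1}$, $m_{()} = s_{()} = 0$ and $e_{(0)} = 1 - p_1^{-1}$, so at the root your formula yields $p_1^{-1} - 1$ instead of $x_{()} = p_1^{-1}$. The repair is immediate and uses precisely the fact you invoke, namely that (iii) forces $N_\si \in \Z$: define instead $A_\si := m_\si + s_\si + N_\si$, which still lies in $\Z[p_{2n}^{-1}]$, and then $x_\si = A_\si + B_\si - \sum_b B_{\si b}$ does hold. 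One should also note, as you do, that condition (i) together with the fact that a tree is closed under initial segments guarantees that all these coefficients vanish off $T$ (for $\si \notin T$ one has $x_\si = 0$ and $x_{\si b} = 0$ for all $b$, so $A_\si = B_\si = N_\si = 0$), and that only finitely many are non zero since $x$ has finite support. With this correction your argument is complete and correct.
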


\begin{proof}
The definition of $G(T)$ and the continuity of the homomorphisms $\rho_p, \pi_\si$ and $\gamma_\si$ imply the following properties.
\begin{enumerate}
\item For all $\si \in \N^{< \N}$ with $\si \not\in T$ and for all $g \in \overline{G(T)}$, we have $\pi_\si(g) = 0$.
\item For all $\si \in T$, all prime numbers $p \notin \{p_{2|\si|-1}, p_{2|\si|}, p_{2|\si|+1}\}$ and all $g \in \overline{G(T)}$, we have $\rho_{p}(\pi_\si(g))=0$.
\item For all $\si \in T$ and all $g \in \overline{G(T)}$, we have $\rho_{p_{2|\si|+1}}(\gamma_\si(g))=0$.
\end{enumerate}

For all $g \in \Lambda$ we consider the finite set $S_g = \{(p,\si) \in \cP \times \N^{< \N} \mid \rho_p(\pi_\si(g)) \neq 0\}$. Note that $S_g = \emptyset$ if and only if $\pi_\si(g) \in \Z$ for every $\si \in \N^{< \N}$.

Assume that $g \in \overline{G(T)} \cap \Lambda$ with $g \not\in G(T)$. It follows that $S_g \neq \emptyset$. Indeed, otherwise we get that $\pi_\si(g) \in \Z$ for all $\si \in \N^{< \N}$. By property 1 above, we know that $\pi_\si(g) = 0$ for all $\si \not\in T$. Hence $g$ belongs to the group generated by $\{\delta_\si \mid \si \in T\}$. In particular, $g \in G(T)$.

Assume that $\overline{G(T)} \cap \Lambda$ is strictly larger than $G(T)$. Take $g \in (\overline{G(T)} \cap \Lambda) \setminus G(T)$ minimizing $|S_g|$. By the previous paragraph $S_g \neq \emptyset$. So we can choose $(p,\si) \in S_g$ maximizing $|\si|$. By property 1 above we have $\si \in T$. Property 2 implies that $p \in \{p_{2|\si|-1}, p_{2|\si|}, p_{2|\si|+1}\}$. We prove that every of these three possibilities leads to a contradiction.

First assume that $|\si|>0$ and $p = p_{2|\si|-1}$. By property 3 above we have $\rho_p(\gamma_{\si^-}(g))=0$. Using property 1 it follows that
$$\rho_p(g_{\si^-}) + \sum_{b \in \N, \si^- b \in T} \rho_p(g_{\si^- b})=0 \; .$$
So we can take $h$ in the group generated by $\{ p^{-k} \, (\delta_{\si^- b} - \delta_{\si^-}) \mid b \in \N, \si^- b \in T, k \in \N \}$ such that $\rho_p(h_{\si^-}) = \rho_p(g_{\si^-})$ and $\rho_p(h_{\si^- b}) = \rho_p(g_{\si^- b})$ for all $b \in \N$. Note that $h \in G(T)$ and hence $g-h \in (\overline{G(T)} \cap \Lambda) \setminus G(T)$. By construction
$$S_{g-h} = S_g \setminus \bigl(\; \{(p,\si^-)\} \; \cup \;  \{(p,\si^- b) \mid b \in \N\} \; \bigr) \; .$$
So $|S_{g-h}| < |S_g|$ contradicting the minimality of $|S_g|$.

Next assume that $p = p_{2|\si|}$. Choose $h$ in the group generated by $\{ p^{-k} \, \delta_\si \mid k \in \N \}$ such that $\rho_p(h_\si) = \rho_p(g_\si)$. Note that $h \in G(T)$ and hence $g-h \in (\overline{G(T)} \cap \Lambda) \setminus G(T)$. Again $|S_{g-h}| < |S_g|$ contradicting the minimality of $|S_g|$.

Finally assume that $p = p_{2|\si|+1}$. By property 3 above we have that $\rho_p(\gamma_{\si}(g))=0$. Hence
$$\rho_p(\pi_\si(g)) + \sum_{b \in \N} \rho_p(\pi_{\si b}(g)) = 0 \; .$$
As $\rho_p(\pi_\si(g)) \neq 0$, it follows that there exists a $b \in \N$ such that $\rho_p(\pi_{\si b}(g)) \neq 0$. This contradicts the maximality of $|\si|$.

In all three cases we obtained a contradiction. This ends the proof of the lemma.
\end{proof}

We are now ready to prove Theorem \ref{thm.ex}.

\begin{proof}[Proof of Theorem \ref{thm.ex}]
The first part of Theorem \ref{thm.ex} follows immediately from Theorem \ref{thm.main} and the following observations.
\begin{itemize}
\item The subgroups $K(\cP_1)$ and $K(\cP_2)$ are commensurate if and only if the symmetric difference $\cP_1 \bigtriangleup \cP_2$ is finite.
\item Considering the order of the elements in $K$, it follows that every automorphism $\delta \in \Aut(K)$ globally preserves each direct summand $\Z/p\Z$. In particular $\delta(\Lambda) = \Lambda$ and $\delta(\Lambda K(\cP_1)) = \Lambda K(\cP_1)$ for every subset $\cP_1 \subset \cP$.
\end{itemize}

It remains to prove the second part of Theorem \ref{thm.ex}. Put $\Lambda = \Q^{(\N^{< \N})}$ and denote by $K$ the compactification of $\Lambda$ given by Lemma \ref{lem.compactQ}. Consider the II$_1$ factor $M := \rL^\infty(K^\Gamma) \rtimes (\Gamma \times \Lambda)$.
For every tree $T \subset \N^{< \N}$ denote by $G(T) \subset \Lambda$ the subgroup defined in Lemma \ref{lem.compactQ}. Denote by $\cC(T) := \cC(\overline{G(T)})$ the associated Cartan subalgebra.

We denote the set of trees by $\trees$. Viewing $\trees$ as a subset of the power set of $\N^{< \N}$, it is a standard Borel space. It is straightforward though tedious to check that the map
$$\trees \recht \cartan(M) : T \mapsto \cC(T)$$
is Borel.

A classical result (see e.g.\ \cite[Theorem 27.1]{Ke95}) says that the subset of $\trees$ consisting of the trees that admit an infinite path, is complete analytic. In combination with \cite[Lemma 3.1]{DM07} we get a complete analytic set $X \subset \R$ and Borel functions $F_1,F_2: \R \rightarrow \trees$ such that the following statements hold.
\begin{itemize}
\item If $x \in X$, then $F_1(x) \cong F_2(x)$.
\item If $x \notin X$, then $F_1(x)$ has an infinite path and $F_2(x)$ has no infinite path.
\end{itemize}
Moreover we may assume that for all $x \in \R$ the trees $F_1(x)$ and $F_2(x)$ consist of sequences of even numbers only (for instance by ``multiplying by $2$'' the original $F_1,F_2$).

Define the Borel map
$$h : \R \recht \cartan(M) \times \cartan(M) : h(x) = (\cC(F_1(x)),\cC(F_2(x))) \; .$$
Denote by $\cR \subset \cartan(M) \times \cartan(M)$ the equivalence relation given by the Cartan subalgebras $(A,B)$ that are conjugate by an automorphism of $M$. In Proposition \ref{prop.borel} we have seen that $\cR$ is an analytic subset of $\cartan(M) \times \cartan(M)$. We prove below that $h^{-1}(\cR) = X$. Since $X$ is complete analytic, it then follows that $\cR$ is complete analytic.

The formula $h^{-1}(\cR) = X$ follows, once we have shown the following two statements.

\begin{itemize}
\item If $S, T \subset \N^{< \N}$ are trees that consist of sequences of even numbers and if $S \cong T$, then the Cartan subalgebras $\cC(S)$ and $\cC(T)$ are conjugate by an automorphism of $M$.
\item If $S, T \subset \N^{< \N}$ are trees such that $S$ admits an infinite path and such that the Cartan subalgebras $\cC(S)$ and $\cC(T)$ are conjugate by an automorphism of $M$, then $T$ also admits an infinite path.
\end{itemize}

To prove the first statement, let $\psi : S \recht T$ be an isomorphism of trees, i.e.\ a bijection that preserves initial segments. Since $S$ and $T$ only contain sequences of even numbers, $\psi$ can be extended, by induction on the length $|\sigma|$, to an isomorphism $\psi : \N^{< \N} \recht \N^{< \N}$. Denote by $\al \in \Aut(\Lambda)$ the corresponding automorphism given by $\al(\delta_\si) = \delta_{\psi(\sigma)}$ for all $\si \in \N^{< \N}$. Using the notation of Lemma \ref{lem.compactQ} we have that $\pi_{\psi(\si)} \circ \al = \pi_\si$ and $\gamma_{\psi(\si)} \circ \al = \gamma_\si$ for all $\si \in \N^{< \N}$. Hence $\al$ extends to a continuous automorphism of $K$. By construction $\al(G(S)) = G(T)$ and so $\al(\overline{G(S)}) = \overline{G(T)}$. It follows from Theorem \ref{thm.main} that the Cartan subalgebras $\cC(S)$ and $\cC(T)$ are conjugate by an automorphism of $M$.

To prove the second statement, assume that $S$ admits an infinite path and that the Cartan subalgebras $\cC(S)$ and $\cC(T)$ are conjugate by an automorphism of $M$. By Theorem \ref{thm.main} we get a continuous automorphism $\al$ of $K$ such that $\al(\Lambda) = \Lambda$ and $\al(\Lambda \overline{G(S)}) = \Lambda \overline{G(T)}$. It follows that $\al(\overline{G(S)})$ is commensurate with $\overline{G(T)}$ and that $\al(\overline{G(S)} \cap \Lambda)$ is commensurate with $\overline{G(T)} \cap \Lambda$. We conclude that $\al(G(S))$ and $G(T)$ are commensurate.

Let $(x_1 x_2 \cdots)$ be an infinite path in $S$. Define $g_0 = \delta_{()}$ and $g_i = \delta_{(x_1 \cdots x_i)}$. By construction, $g_i$ is a sequence of elements in $G(S)$ such that for all $i$ we have that $g_i$ is divisible by all the powers of $p_{2i}$ and that $g_{i+1} - g_i$ is divisible by all the powers of $p_{2i + 1}$. Denote by $N < \infty$ the index of $\al(G(S)) \cap G(T)$ in $G(T)$. Then $N \al(g_0), N \al(g_1), N \al(g_2) , \cdots$ is a sequence of elements in $G(T)$ satisfying the same divisibility conditions. It follows from \cite[Lemma 2.2]{DM07} that the tree $T$ admits an infinite path.
\end{proof}

\begin{remark}\label{rem.descriptive}
Theorem \ref{thm.ex} provides examples of II$_1$ factors $M$ such that the equivalence relation given by ``being conjugate by an automorphism'' on the set of Cartan subalgebras of $M$, is complete analytic. This is a mathematically rigorous way of saying that the decision problem whether two Cartan subalgebras of $M$ are conjugate by an automorphism, is as hard as it can possibly be. The decision problem whether two Cartan subalgebras of $M$ are unitarily conjugate is strictly easier. Indeed, Popa's unitary conjugacy criterion in \cite[Theorem A.1]{Po01} is rather concrete and, as we have seen in Proposition \ref{prop.borel}, shows that unitary conjugacy of Cartan subalgebras is always Borel. The fact that conjugacy by an automorphism can be complete analytic shows that there can never be a similar concrete criterion for conjugacy of Cartan subalgebras by an automorphism.

There are also other mathematically rigorous undecidability and non classifiability statements. It sounds plausible that there exist II$_1$ factors $M$ for which the Cartan subalgebras cannot be classified by countable structures. Entirely new techniques would however be needed to construct such II$_1$ factors. Indeed, the Cartan subalgebras, up to any kind of conjugacy, given by the admissible subgroups associated with a pair $\Lambda \subset K$, are by definition classifiable by countable structures.

As far as we know, it is still an open problem whether or not the isomorphism relation on countable abelian groups is universal among the equivalence relations that are classifiable by countable structures. If this problem can be solved in the affirmative, it is very likely that a technique similar to Theorem \ref{thm.ex} will provide II$_1$ factors $M$ for which the classification of Cartan subalgebras up to conjugacy by an automorphism, is in the same sense universal among equivalence relations that are classifiable by countable structures.
\end{remark}

\section{Factors with many group measure space Cartan subalgebras}

In this section we prove Theorem \ref{thm.gmsc}. We actually construct a concrete group $G$ and a concrete uncountable family $G \actson (X_\cF,\mu_\cF)$, indexed by all possible subsets $\cF \subset \N$, of free ergodic p.m.p.\ actions that are non stably orbit equivalent and that give rise to isomorphic crossed product II$_1$ factors.

The basic idea is due to \cite[Section 6.1]{Po06b} yielding the following concrete example of \emph{two} group actions $H \actson (X_i,\mu_i)$, $i = 1,2$, that are not orbit equivalent (although stably orbit equivalent) and yet have $\rL^\infty(X_1) \rtimes H \cong \rL^\infty(X_2) \rtimes H$. Let $H_0$ be a finite non commutative group. Define the infinite direct sum $H_1 := H_0^{(\N)}$ with compactification $K := H_0^\N$. Consider the action $H_1 \actson K$ by left translation. Define $X_1 := K^{\SL(3,\Z)}$ and consider the action of $H := \SL(3,\Z) \times H_1$ on $X_1$ where $\SL(3,\Z)$ acts by Bernoulli shift and $H_1$ acts diagonally. Take the first copy of $H_0$ in $H_1$ and put $X_2 := X_1 / H_0$. We have a natural action of $H \cong H/H_0$ on $X_2$. By construction $H \actson X_2$ is stably orbit equivalent (with compression constant $|H_0|^{-1}$) with $H \actson X_1$. Put $M = \rL^\infty(X_1) \rtimes H$. In  \cite[Section 6.1]{Po06b} it is shown that $M$ has fundamental group $\R_+$, while the orbit equivalence relation of $H \actson X_1$ has trivial fundamental group. Hence $\rL^\infty(X_1) \rtimes H \cong \rL^\infty(X_2) \rtimes H$ although $H \actson X_1$ and $H \actson X_2$ are not orbit equivalent.

We call two free ergodic p.m.p.\ actions \emph{W$^*$-equivalent} if their associated group measure space II$_1$ factors are isomorphic.

We now perform the following general construction. We start with two free ergodic p.m.p.\ actions $H \actson X_i$, $i=1,2$, that are not orbit equivalent but that are W$^*$-equivalent, and we construct a group $G$ with uncountably many non stably orbit equivalent but W$^*$-equivalent actions.

Assume that $\Gamma$ is a countable group and that $\Gamma_n < \Gamma$ is a sequence of subgroups of infinite index. Define the disjoint union $I := \bigsqcup_{n \in \N} \Gamma/\Gamma_n$ with the natural action $\Gamma \actson I$. To avoid trivialities, assume that $\Gamma \actson I$ is faithful, i.e.\ the intersection of all $g \Gamma_n g^{-1}$, $g \in \Gamma$, $n \in \N$, reduces to $\{e\}$. Define the generalized wreath product group
$$G := H \wr_I \Gamma = H^{(I)} \rtimes \Gamma \; .$$
Whenever $\cF \subset \N$, define the space
$$X_\cF := \prod_{n \in \cF} X_1^{\Gamma/\Gamma_n} \; \times \; \prod_{n \not\in \cF} X_2^{\Gamma / \Gamma_n} \; . $$
Taking an infinite product of copies of the action $H \actson X_i$, we get an action $H^{(\Gamma/\Gamma_n)} \actson X_i^{\Gamma/\Gamma_n}$. Taking a further infinite product of these actions over $n \in \cF$ (with $i=1$) and $n \in \N - \cF$ (with $i=2$), we obtain an action of $H^{(I)}$ on $X_\cF$. This action is compatible with the Bernoulli shift of $\Gamma$ on $X_\cF$. So we have constructed a free ergodic p.m.p.\ action
$$G \overset{\si_\cF}{\actson} X_\cF \; .$$
Denoting $M := \rL^\infty(X_1) \rtimes H \cong \rL^\infty(X_2) \rtimes H$, it is easily checked that
$$\rL^\infty(X_\cF) \rtimes G \cong \bigl( \underset{i \in I}{\ovt} M \bigr) \rtimes \Gamma \quad\text{where $\Gamma$ acts on $\underset{i \in I}{\ovt} M$ by shifting the indices.}$$
Hence all the actions $G \actson X_\cF$ are W$^*$-equivalent.

Theorem \ref{thm.gmsc} is an immediate consequence of the following more concrete result, showing that under the right assumptions, the actions $\si_\cF$ are mutually non stably orbit equivalent. We did not try to formulate the most general conditions under which such a result holds, but provide a concrete example of $\Gamma_n < \Gamma$ for which the proof is rather straightforward.

\begin{proposition} \label{prop.concrete}
Consider the group actions $H \actson X_i$, $i = 1,2$, defined above: $H \actson X_i$ are W$^*$-equivalent, non orbit equivalent and $H$ is a residually finite group. Define the field
$$K := \Q(\sqrt{n} \mid n \in \N)$$
by adding all square roots of positive integers to $\Q$. For all $n \geq 2$ we put $K_n := \Q(\sqrt{n})$. Define the countable group $\Gamma := \PSL(4,K)$ and the sequence of subgroups
$$\Gamma_n := \{a A \mid a \in K \; , \; A \;\text{is an upper triangular matrix with entries in $K_n$} \; , \; \det(a A) = 1 \; \} \; .$$
As above we put $I = \bigsqcup_{n \geq 2} \Gamma / \Gamma_n$ and $G = H \wr_I \Gamma$. For every subset $\cF \subset \N - \{0,1\}$, we are given the action $\si_\cF$ of $G$ on $X_\cF$.

The actions $\si_{\cF}$ and $\si_{\cF'}$ are stably orbit equivalent iff $\cF = \cF'$.
\end{proposition}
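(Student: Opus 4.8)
Since equal actions are orbit equivalent, the ``if'' direction is trivial and the whole content lies in the converse. The plan is therefore to assume that $\si_\cF$ and $\si_{\cF'}$ are stably orbit equivalent and to deduce $\cF=\cF'$. First I would pass from the SOE to von Neumann algebras. As the actions are free and ergodic, $\rL(\cR(\si_\cF))=\rL^\infty(X_\cF)\rtimes G$, and a stable orbit equivalence gives a $*$-isomorphism between corners of $\rL^\infty(X_\cF)\rtimes G$ and $\rL^\infty(X_{\cF'})\rtimes G$ carrying the group measure space Cartan $\rL^\infty(X_\cF)$ onto $\rL^\infty(X_{\cF'})$. Using the identification $\rL^\infty(X_\cF)\rtimes G\cong(\ovt_{i\in I}M)\rtimes\Gamma=:\widetilde{M}$ recorded just before the statement, which is \emph{independent} of $\cF$, this yields a stable automorphism $\theta$ of $\widetilde{M}$ sending $A_\cF:=\rL^\infty(X_\cF)$ onto $A_{\cF'}:=\rL^\infty(X_{\cF'})$. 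Both Cartans sit inside the core $\widetilde{N}:=\ovt_{i\in I}M$ on which $\Gamma$ acts by the noncommutative generalized Bernoulli shift attached to $\Gamma\actson I$.

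\emph{The rigidity step} is the heart of the matter and, I expect, the main obstacle. I would run a deformation/rigidity argument in the spirit of \cite[Theorem 4.1]{PV06} and of Lemma \ref{lemma.reduction}, now with the noncommutative base $M$, to show that $\theta$ must come from ``local'' data. The $\Gamma$-Bernoulli shift on $\widetilde{N}$ is $s$-malleable and weakly mixing, since every $\Gamma_n$ has infinite index and hence every orbit $\Gamma/\Gamma_n$ is infinite; rigidity is supplied by the property (T) Bernoulli building blocks $\SL(3,\Z)\actson X_i$ inside $M$, i.e.\ by the rigid inclusions $\rL\,\SL(3,\Z)\subset M$. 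Combining Popa's malleable deformation with intertwining-by-bimodules (and the noncommutative, twisted cocycle superrigidity in the style of Theorem \ref{thm.cocycle}) should locate the images of $\widetilde{N}$ and of the fibres and show, exactly as in Lemma \ref{lemma.reduction}, that after a unitary conjugacy and up to amplification $\theta$ is similar to a map built from an automorphism $\delta\in\Aut G$, a $\delta$-compatible nonsingular bijection of the index set $I$, and a measurable family of isomorphisms of the base building blocks $M_i$. The difficulty is precisely the one flagged before Lemma \ref{lemma.reduction}: the orbit equivalence does not split as a product, so one must show it ``does not mix up'' the Bernoulli ($\Gamma$) direction with the fibre ($H^{(I)}$) direction nor the distinct blocks with one another. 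In particular one extracts an automorphism $\delta_1\in\Aut\Gamma$ together with a $\delta_1$-equivariant matching of the $\Gamma$-orbits in $I$.

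\emph{Next I would prove that the blocks are preserved}, which is where the concrete choice $\Gamma=\PSL(4,K)$ does its work. The matching sends a block $\Gamma/\Gamma_n$ onto some $\Gamma/\Gamma_m$ with $\delta_1(\Gamma_n)$ commensurate to a conjugate of $\Gamma_m$. By the Borel--Tits description of $\Aut\Gamma$, any automorphism is a composition of an inner automorphism, a field automorphism $\phi\in\Aut K$ and possibly the graph automorphism $g\mapsto(g^{t})^{-1}$. A field automorphism satisfies $\phi(\sqrt n)=\pm\sqrt n$, hence fixes each subfield $K_n=\Q(\sqrt n)$ setwise and maps $\Gamma_n$ onto $\Gamma_n$; the graph automorphism turns the upper triangular $\Gamma_n$ into the lower triangular group, which is conjugate to $\Gamma_n$; and inner automorphisms preserve conjugacy classes. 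Thus $\delta_1(\Gamma_n)$ is always conjugate to $\Gamma_n$, whereas for $n\neq m$ with $K_n\neq K_m$ the subgroups $\Gamma_n$ and $\Gamma_m$ are non-conjugate and admit no $\Gamma$-equivariant finite correspondence, the field of definition $K_n$ being a conjugacy invariant (restricting the index $n$ to square-free integers, for which the $K_n$ are pairwise distinct, makes this clean). Hence the matching must send $n$ to $n$, so it is the identity on the block labels.

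\emph{Finally I would conclude.} With the blocks matched $n\leftrightarrow n$, the family of base isomorphisms from the rigidity step restricts, on a single coordinate of a matched block, to an orbit equivalence (indeed a conjugacy up to $\delta$) between the building block actions $H\actson X_{i_n}$ and $H\actson X_{i'_n}$, where $i_n,i'_n\in\{1,2\}$ record membership of $n$ in $\cF$, respectively $\cF'$. Since $H\actson X_1$ and $H\actson X_2$ are \emph{not} orbit equivalent by construction (following \cite[Section 6.1]{Po06b}), this forces $i_n=i'_n$ for every $n$, that is $\cF=\cF'$. Throughout, the residual finiteness of $H$ and the icc property of $\Gamma$ guarantee the freeness, ergodicity and weak mixing needed to apply the above steps, and faithfulness of $\Gamma\actson I$ (checked from the intersection of the conjugates of the $\Gamma_n$ being trivial) ensures $G\actson X_\cF$ is genuinely a generalized wreath product action.
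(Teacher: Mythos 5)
Your top-level skeleton (untwist to a group homomorphism, match the blocks $\Gamma/\Gamma_n$ using the structure of $\Aut\Gamma$ and the fields $K_n$, then conclude from the non orbit equivalence of $H \actson X_1$ and $H \actson X_2$) agrees with the paper's, and your remark about restricting to square-free $n$ is a pertinent one. But the step you yourself call ``the heart of the matter'' is not a proof; it is a wish. You pass to the von Neumann algebra $\widetilde{M} = \bigl(\ovt_{i\in I} M\bigr)\rtimes\Gamma$ and assert that Popa's malleable deformation plus intertwining-by-bimodules plus a ``noncommutative twisted cocycle superrigidity in the style of Theorem \ref{thm.cocycle}'' \emph{should} show that $\theta$ comes from an automorphism of $G$, a bijection of $I$ and a family of base isomorphisms, ``exactly as in Lemma \ref{lemma.reduction}''. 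None of the invoked results applies to this situation: Lemma \ref{lemma.reduction} is proved for product actions $G_i/K_i \times H_i \actson X/K_i \times Y_i$ with compact abelian base, Theorem \ref{thm.cocycle} is a cocycle statement for the specific inclusion $\rL^\infty(K^\Gamma)\rtimes\Lambda$, and neither handles a generalized Bernoulli action of a wreath product $H\wr_I\Gamma$ with infinite stabilizers $\Stab i$ and noncommutative building blocks. Proving such a structural theorem for stable automorphisms of $\widetilde{M}$ carrying $A_\cF$ to $A_{\cF'}$ would be a substantial W$^*$-rigidity problem in its own right; moreover your proposed source of rigidity, the inclusions $\rL\,\SL(3,\Z)\subset M$ sitting inside the base, does not feed into Popa's cocycle superrigidity machinery, where the rigidity must come from the \emph{acting} group.

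The detour through $\widetilde{M}$ is also unnecessary, and this is where the actual proof is both different and much simpler. Since a stable orbit equivalence $\Delta : X_\cF \recht X_{\cF'}$ is given, its Zimmer cocycle $\om : G\times X_\cF \recht G$ exists from the start, and no von Neumann algebraic deformation/rigidity is needed. The paper untwists $\om$ on the property (T) subgroup $\Lambda = \PSL(4,\Z) < \Gamma$ by Popa's measure-theoretic cocycle superrigidity \cite{Po05} (the restriction of $\si_\cF$ to $\Lambda$ is a weakly mixing generalized Bernoulli action), propagates the untwisting to all of $\Gamma$ via the commuting non amenable subgroups $\Lambda_{ij}$ and the elementary subgroups $E_{ij}(K)$ using \cite[Proposition 3.6]{Po05}, and then uses the group-theoretic Lemma \ref{lemma.intertwine} (relative property (T) subgroups of $H\wr_I\Gamma$), together with simplicity of $\Gamma$ and residual finiteness of $H$, to conjugate $\delta(\Gamma)$ into $\Gamma$. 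A symmetry argument and Lemma \ref{lem.elem} then upgrade $\Delta$ to a genuine $\delta$-conjugacy of the $\Gamma$-actions, and \cite[Lemma 6.15]{PV09} splits it as a product of isomorphisms along a permutation $\eta$ of $I$. Only at that point do the $\Aut\Gamma$ considerations enter, and a final cocycle argument (again \cite[Proposition 3.6]{Po05}, showing that for $h\in H^i$ the value $\om(h,x)$ depends only on $x_i$ and lands in $H^{\eta(i)}$) produces the orbit equivalence of the building blocks. These intermediate steps --- untwisting on $\Lambda$ and extending to $\Gamma$, conjugating into $\Gamma$ via Lemma \ref{lemma.intertwine}, splitting via \cite[Lemma 6.15]{PV09}, and localizing the $H^{(I)}$-part of the cocycle --- are precisely what is missing from your proposal, and without them there is no proof.
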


We freely make use of the following properties of $\Gamma_n < \Gamma$.
\begin{itemize}
\item $\Gamma$ is a simple group. In particular, $\Gamma$ has no non trivial amenable quotients.
\item The subgroups $\Gamma_n$ are amenable.
\item If $n \geq 2$ and $g \in \Gamma-\Gamma_n$, then $g \Gamma_n g^{-1} \cap \Gamma_n$ has infinite index in $\Gamma_n$, i.e.\ the quasi-normalizer of $\Gamma_n$ inside $\Gamma$ equals $\Gamma_n$. Equivalently: $\Gamma_n$ acts with infinite orbits on $\Gamma/ \Gamma_n - \{e \Gamma_n\}$.
\item If $n \neq m$ and $\delta \in \Aut \Gamma$, then $\delta(\Gamma_n) \cap \Gamma_m$ has infinite index in $\Gamma_m$. This follows because the automorphism group of $\Gamma$ is generated by the inner automorphisms, the automorphisms given by automorphisms of the field $K$ and the automorphism transpose-inverse, and because every field automorphism of $K$ globally preserves every $K_n$.
\item The previous two items imply that $(\Stab i) \cdot j$ is infinite for all $i \neq j$.
\end{itemize}

We also use the following lemma. It is a baby version of similar von Neumann algebra results (see \cite[Theorem 4.1]{Po03}, \cite[Theorem 0.1]{Io06} and \cite[Theorem 4.2]{IPV10}). Our lemma is also closely related to \cite[Theorem 6.7]{CSV09}. We include a short and elementary proof for the convenience of the reader.

\begin{lemma} \label{lemma.intertwine}
Let $\Gamma \actson I$ be any action of a countable group $\Gamma$ by permutations of a countable set $I$. Let $H$ be any countable group and put $G := H \wr_I \Gamma$. Assume that $\Lambda < G$ is a subgroup with the relative property (T). Then either a finite index subgroup of $\Lambda$ is contained in $H \wr_I \Stab i$ for some $i \in I$, or there exists a $g \in G$ such that $g \Lambda g^{-1} \subset \Gamma$.
\end{lemma}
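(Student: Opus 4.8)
The plan is to exploit the split exact sequence defining $G$. Let $\pi : G \recht \Gamma$ be the quotient with kernel $H^{(I)}$, and write each $\lambda \in \Lambda$ uniquely as $\lambda = f_\lambda\, g_\lambda$ with $f_\lambda \in H^{(I)}$ and $g_\lambda = \pi(\lambda)$. Then $\lambda \mapsto f_\lambda$ is a $1$-cocycle for the action of $\Lambda$ on $H^{(I)}$ induced by $\pi$ and the permutation action $\Gamma \actson I$. First I would translate both alternatives into cocycle language. Put $\Sigma := \pi(\Lambda) < \Gamma$. A finite index subgroup of $\Lambda$ lies in $H \wr_I \Stab i$ exactly when $\Stab_\Sigma i$ has finite index in $\Sigma$, i.e.\ when $\Sigma$ has a finite orbit on $I$ (pull the stabiliser back through $\pi$). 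On the other hand $g\Lambda g^{-1}\subset\Gamma$ for some $g\in G$ is equivalent to the cocycle $\lambda\mapsto f_\lambda$ being a coboundary implemented by some $f_0\in H^{(I)}$, namely $f_\lambda = f_0\,{}^{g_\lambda}(f_0^{-1})$ for all $\lambda$, since conjugation by $f_0^{-1}$ then annihilates every $f_\lambda$. So the statement reduces to: if $\Sigma$ has no finite orbit on $I$, this cocycle is such a coboundary.

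Next I would feed the cocycle into relative property (T). Consider the orthogonal representation $\pi_0$ of $G$ on $\cH := \bigoplus_{i\in I}\ell^2(H)$ in which $(h_i)_i\in H^{(I)}$ acts on the $i$-th summand by left translation and $\Gamma$ permutes the summands along $\Gamma\actson I$. The vector $\eta := \bigoplus_i(-\delta_e)$ lies in $\prod_i \ell^2(H)$ but not in $\cH$; nevertheless its formal coboundary $b(\lambda) := \eta - \pi_0(\lambda)\eta$, whose $i$-th coordinate is $\delta_{(f_\lambda)_i}-\delta_e$, does lie in $\cH$ and is an honest $1$-cocycle for $\pi_0$. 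By relative property (T) of $(G,\Lambda)$ the affine isometric action $v\mapsto \pi_0(\lambda)v + b(\lambda)$ has bounded $\Lambda$-orbits, hence a $\Lambda$-fixed point $\xi\in\cH$ (its circumcenter). Setting $\zeta := \xi - \eta \in \prod_i\ell^2(H)$, one checks directly that $\zeta$ is $\pi_0(\Lambda)$-invariant, i.e.\ $(f_\lambda)_i\cdot\zeta_{g_\lambda^{-1}i} = \zeta_i$ for all $\lambda,i$, where $H$ acts by left translation; and since $\xi\in\cH$ we have $\zeta_i\to\delta_e$ along $I$.

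Finally I would run the dichotomy on $\zeta$. If $\Sigma$ has a finite orbit we are in the first alternative, so assume every $\Sigma$-orbit on $I$ is infinite, the aim being to show each $\zeta_i$ is a point mass $\delta_{(f_0)_i}$ with $f_0\in H^{(I)}$. Fix an orbit $\mathcal{O}$. The invariance relation shows that for $i,j\in\mathcal{O}$ the vectors $\zeta_i$ and $\zeta_j$ are left $H$-translates of one another, so $i\mapsto\|\zeta_i\|$ is constant on $\mathcal{O}$; as $\mathcal{O}$ is infinite and $\zeta_i\to\delta_e$, this constant equals $1$ and infinitely many coordinates in $\mathcal{O}$ lie within any prescribed distance of $\delta_e$. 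A small-ball argument (two unit left-translates of one vector that are both close to $\delta_e$ must coincide, because $\|\delta_h-\delta_e\|=\sqrt 2$ whenever $h\neq e$) forces all these nearby coordinates to equal $\delta_e$ exactly, whence $\zeta_i=\delta_e$ for cofinitely many $i\in\mathcal{O}$ and $\zeta_i=\delta_{(f_0)_i}$ a point mass for the finitely many exceptions. Thus every $\zeta_i$ is a point mass; then $\|\xi_i\|^2=\|\delta_{(f_0)_i}-\delta_e\|^2\in\{0,2\}$, and square-summability of $\xi\in\cH$ upgrades to finite support of $f_0$. The invariance relation becomes exactly $f_\lambda = f_0\,{}^{g_\lambda}(f_0^{-1})$, and conjugating by $f_0^{-1}\in G$ sends $\Lambda$ into $\Gamma$.

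The step I expect to be the main obstacle is this last one. Relative property (T) only delivers a \emph{square-summable} fixed vector, which a priori could have infinitely many nonzero coordinates and so would trivialise the cocycle only by an infinite-support element, never landing inside $G$. The crux is that the infinitude of every $\Sigma$-orbit rigidifies the fixed vector into a configuration of genuine point masses, so that square-summability is promoted to finite support and the conjugating element stays in $H^{(I)}$. Choosing the representation $\pi_0$ so that $b$ is precisely the coboundary of the non-summable vector $\eta$ is the device that makes relative property (T) bite; without it the argument has nothing to bound.
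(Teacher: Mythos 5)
Your proposal is correct and follows essentially the same route as the paper's proof: the same representation on $\ell^2(H \times I) = \bigoplus_{i \in I} \ell^2(H)$, the same formal coboundary of the non-square-summable vector $\eta$ built from $\delta_e$'s, and the same application of relative property (T) to make that cocycle inner on $\Lambda$. The only difference is in the endgame: where the paper extracts the point-mass structure of the fixed vector using a sequence $(h_n,\gamma_n) \in \Lambda$ with $\gamma_n^{-1} \cdot i \to \infty$ for all $i$ together with closedness of $\{\delta_e - \delta_h \mid h \in H\}$ in $\ell^2(H)$, you argue orbit-by-orbit via constancy of $\|\zeta_i\|$ along each $\Sigma$-orbit and a small-ball estimate -- both are valid ways to finish, and yours has the minor advantage of not needing the (standard, Neumann-type) fact that an action with all orbits infinite admits such a sequence.
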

\begin{proof}
Define the unitary representation $\pi : G \recht \cU(\ell^2(H \times I))$ given by
$$(\pi(h) \xi)(k,i) = \xi(h_i^{-1} k , i) \;\;\text{if $h \in H^{(I)}$, and}\quad (\pi(\gamma) \xi)(k,i) = \xi(k,\gamma^{-1} \cdot i) \;\;\text{if $\gamma \in \Gamma$.}$$
Define the function
$$\eta : H \times I \recht \C : \eta(k,i) = \begin{cases} 1 &\;\;\text{if $k=e$,} \\ 0 &\;\;\text{if $k \neq e$.}\end{cases}$$
The formal expression $b(g) := \pi(g) \eta - \eta$ provides a well defined $1$-cocycle of $G$ with values in $\ell^2(H \times I)$.
Since $\Lambda < G$ has the relative property (T), the $1$-cocycle $b$ is inner on $\Lambda$. So we find $\xi \in \ell^2(H \times I)$ satisfying
$$b(\lambda) = \pi(\lambda) \xi - \xi \quad\text{for all}\;\; \lambda \in \Lambda \; .$$
Denote by $\delta_h : H \recht \C$ the function that equals $1$ in $h$ and $0$ elsewhere. We get that
$$\xi(k,i)  = \xi(h_i^{-1} k , \gamma^{-1} \cdot i) - \delta_e(h_i^{-1} k) + \delta_e(k) \quad\text{for all}\;\; (h,\gamma) \in \Lambda \; , \; k \in H \; , \; i \in I \; .$$
For every $i \in I$, we write $\xi_i \in \ell^2(H)$ given by $\xi_i(k) = \xi(k,i)$. We also denote by $(\lambda_h)_{h \in H}$ the left regular representation of $H$ on $\ell^2(H)$. The above formula becomes
\begin{equation}\label{eq.almost}
\xi_i = \lambda_{h_i} \xi_{\gamma^{-1} \cdot i} - \delta_{h_i} + \delta_e \quad\text{for all}\;\; (h,\gamma) \in \Lambda \; , \; i \in I \; .
\end{equation}
Since $\xi \in \ell^2(H \times I)$, it follows that $\|\xi_i\| \recht 0$ as $i \recht \infty$. Assume that finite index subgroups of $\Lambda$ are never contained in $H \wr_I \Stab i$. We then find a sequence $(h_n,\gamma_n) \in \Lambda$ such that  for all $i \in I$ we have that $\gamma_n^{-1} \cdot i \recht \infty$ as $n \recht \infty$. Denote by $V \subset \ell^2(H)$ the subset of vectors $V := \{\delta_e - \delta_h \mid h \in H\}$. Note that $V \subset \ell^2(\N)$ is closed. We apply \eqref{eq.almost} to $(h_n,\gamma_n) \in \Lambda$ and let $n \recht \infty$. Since $\|\xi_{\gamma_n^{-1} \cdot i}\| \recht 0$, we conclude that $\xi_i \in V$ for every $i \in I$. Denote by $b_i \in H$ the unique element such that $\xi_i = \delta_{e} - \delta_{b_i}$. Since $\|\xi_i\| \recht 0$ as $i \recht \infty$, we conclude that $b_i = e$ for all but finitely many $i \in I$. So $b := (b_i)_{i \in I}$ is a well defined element of $H^{(I)}$. Formula \eqref{eq.almost} becomes
$$b_i^{-1} \, h_i \, b_{\gamma^{-1} \cdot i} = e \quad\text{for all}\;\; (h,\gamma) \in \Lambda \; , \; i \in I \; ,$$
which precisely means that $b^{-1} \Lambda b \subset \Gamma$.
\end{proof}

We can now prove Proposition \ref{prop.concrete}. We use the terminology and conventions concerning stable orbit equivalences that we introduced after Remark \ref{rem.McDuff}.

\begin{proof}
Let $\cF,\cF' \subset \N - \{0,1\}$ be subsets. Assume that $\Delta : X_{\cF} \recht X_{\cF'}$ is a stable orbit equivalence with corresponding Zimmer $1$-cocycle
$$\om : G \times X_{\cF} \recht G \quad , \quad \Delta(g \cdot x) = \om(g,x) \cdot \Delta(x) \;\;\text{for all $g \in G$ and a.e.\ $x \in X_{\cF}$.}$$
Since both $X_1$ and $X_2$ are standard non atomic probability spaces, the restriction of the action $G \overset{\si_\cF}{\actson} X_\cF$ to $\Gamma$ is the generalized Bernoulli action $\Gamma \actson [0,1]^I$. Note that its further restriction to the property (T) group $\Lambda := \PSL(4,\Z) < \Gamma$ is weakly mixing, since $\Lambda \cdot i$ is infinite for every $i \in I$.
By Popa's cocycle superrigidity theorem \cite[Theorem 0.1]{Po05} we can replace $\Delta$ by a similar stable orbit equivalence and assume that $\om(\lambda,x) = \delta(\lambda)$ for all $\lambda \in \Lambda$ and a.e.\ $x \in X_\cF$, where $\delta : \Lambda \recht G$ is a group homomorphism.

For all $i,j \in \{1,2,3,4\}$ denote by $e_{ij}$ the matrix having $1$ in position $ij$ and $0$ elsewhere. Define, for $i \neq j$ and $k \in K$, the elementary matrix $E_{ij}(k) = 1 + k e_{ij}$. The elementary subgroups $E_{ij}(K)$ generate $\Gamma$. Given distinct $i,j \in \{1,2,3,4\}$, denote by $i',j'$ the remaining elements of $\{1,2,3,4\}$ and denote by $\Lambda_{ij}$ the subgroup of $\Lambda$ generated by $E_{i'j'}(\Z)$ and $E_{j'i'}(\Z)$. Note that $\Lambda_{ij}$ is non amenable and that $\Lambda_{ij}$ commutes with $E_{ij}(K)$. Since $\Lambda_{ij} \actson X_{\cF}$ is weakly mixing and $\om$ is a homomorphism on $\Lambda_{ij}$, \cite[Proposition 3.6]{Po05} implies that $\om$ is a homomorphism on $E_{ij}(K)$. Since this holds for all $i\neq j$, we conclude that $\om$ is a homomorphism on the whole of $\Gamma$. Denote this homomorphism by $\delta : \Gamma \recht G$.

Since $\Gamma$ is simple, $\delta$ is injective. Recall that $G = H \wr_I \Gamma$. We shall prove that there exists a $g \in G$ such that $g \delta(\Gamma) g^{-1} \subset \Gamma$. Denote by $\Gamma_{12}$ the copy of $\PSL(2,K)$ inside $\Gamma$ generated by $E_{12}(K)$ and $E_{21}(K)$. We first claim that there is no $i \in I$ such that $\delta(\Gamma_{12}) \subset H \wr_I \Stab i$. Assume the contrary. Denote by $\pi : G \recht \Gamma$ the natural quotient map. Since $\Gamma_{12}$ is simple and non amenable, $\pi(\delta(\Gamma_{12})) \subset \Stab i$ must be the trivial group. So, $\delta(\Gamma_{12}) \subset H^{(I)}$. Since $H$ is residually finite and $\delta$ is injective, this is absurd. This proves the claim. Since $\Gamma_{12}$ has no non trivial finite index subgroups, we only retain the existence of $\gamma_n \in \Gamma_{12}$ such that for all $i \in I$ we have that $\pi(\delta(\gamma_n)) \cdot i \recht \infty$ as $n \recht \infty$.

We next claim that there is no finite index subgroup $\Lambda_0 < \Lambda$ and $i \in I$ with $\delta(\Lambda_0) \subset H \wr_I \Stab i$. To prove this claim, assume the contrary. Since $\Stab i$ is amenable and $\Lambda_0$ has property (T), it follows that $\pi(\delta(\Lambda_0))$ is finite. So making $\Lambda_0$ smaller but still of finite index, we get that $\delta(\Lambda_0) \subset H^{(I)}$. Since $\Lambda_0$ is finitely generated, we can take $I_1 \subset I$ finite such that $\delta(\Lambda_0) \subset H^{(I_1)}$. Recall the subgroup $\Lambda_{12} < \Lambda$ that commutes with the subgroup $\Gamma_{12} < \Gamma$ that we considered in the previous paragraph. Also recall that we have found elements $\gamma_n \in \Gamma_{12}$ such that for all $i \in I$ we have that $\pi(\delta(\gamma_n)) \cdot i \recht \infty$ as $n \recht \infty$. Since $\delta(\Lambda_0 \cap \Lambda_{12}) \subset H^{(I_1)}$, we have
$$\delta(\gamma_n) \, \delta(\Lambda_0 \cap \Lambda_{12}) \, \delta(\gamma_n)^{-1} \in H^{(\pi(\delta(\gamma_n)) \cdot I_1)} \; .$$
Since $\gamma_n$ and $\Lambda_{12}$ commute, the left hand side equals $\delta(\Lambda_0 \cap \Lambda_{12})$. Letting $n \recht \infty$, the sets $\pi(\delta(\gamma_n)) \cdot I_1$ and $I_1$ are eventually disjoint. We conclude that $\delta(\Lambda_0 \cap \Lambda_{12}) = \{e\}$. Since $\Lambda_0 < \Lambda$ has finite index, the group $\Lambda_0 \cap \Lambda_{12}$ is infinite, contradicting the injectivity of $\delta$.

Combining the claim that we proved in the previous paragraph with Lemma \ref{lemma.intertwine}, we can conjugate $\delta$, replace $\Delta$ by a similar stable orbit equivalence and assume that $\delta(\Lambda) \subset \Gamma$. For all $i,j$ we know that $\delta(\Lambda_{ij})$ is a non amenable subgroup of $\Gamma$. Therefore $\delta(\Lambda_{ij}) \cdot k$ is infinite for all $k \in I$ and the centralizer of $\delta(\Lambda_{ij})$ inside $G$ is a subgroup of $\Gamma$. Therefore $\delta(E_{ij}(K)) \subset \Gamma$ for all $i,j$. Hence, $\delta(\Gamma) \subset \Gamma$.

We can perform a similar reasoning on the generalized inverse stable orbit equivalence $\Deltab : X_{\cF'} \recht X_{\cF}$ and may assume that $\Deltab(g \cdot x) = \deltab(g) \cdot \Deltab(x)$ for all $g \in \Gamma$ and a.e.\ $x \in X_{\cF'}$, where $\deltab : \Gamma \recht \Gamma$ is an injective group homomorphism. By definition $\Deltab(\Delta(x)) \in G \cdot x$ for a.e.\ $x \in X_{\cF}$. So we find a measurable function $\vphi : X_\cF \recht G$ such that $\Deltab(\Delta(x)) = \vphi(x) \cdot x$ for a.e.\ $x \in X_\cF$. It follows that
$$\vphi(g \cdot x) = \deltab(\delta(g)) \, \vphi(x) \, g^{-1} \quad\text{for all}\;\; g \in \Gamma \;\;\text{and a.e.}\;\; x \in X_\cF \; .$$
So the non negligible subset $\{(x,x') \in X_\cF \times X_\cF \mid \vphi(x) = \vphi(x')\}$ is invariant under the diagonal $\Gamma$-action. Since $\Gamma \actson X_\cF$ is weakly mixing, we conclude that $\vphi$ is essentially constant. This constant value necessarily belongs to $\Gamma$ and we conclude that $\deltab \circ \delta$ is an inner automorphism of $\Gamma$. A similar reasoning holds for $\delta \circ \deltab$. In particular $\Delta(\Gamma \cdot x) = \Gamma \cdot \Delta(x)$ for a.e.\ $x \in X_\cF$. Lemma \ref{lem.elem} implies that $\Delta$ is a measure space isomorphism of $X_\cF$ onto $X_{\cF'}$ and that $\Delta$ is a $\delta$-conjugacy of the respective $\Gamma$-actions. In particular, the compression constant of $\Delta$ equals $1$.

Denote
$$X^i_\cF := \begin{cases} X_1 &\;\;\text{if}\;\; i \in \Gamma / \Gamma_n \;\;\text{and}\;\; n \in \cF \; , \\
X_2 &\;\;\text{if}\;\; i \in \Gamma / \Gamma_n \;\;\text{and}\;\; n \not\in \cF \; .\end{cases}$$
By construction, $X_\cF = \prod_{i \in I} X^i_\cF$ and similarly for $X_{\cF'}$.

By \cite[Lemma 6.15]{PV09} the $\delta$-conjugacy $\Delta$ necessarily decomposes as a product of measure space isomorphisms. More precisely,
we find a permutation $\eta$ of the set $I$ and measure space isomorphisms $\Delta_i : X^i_\cF \recht X^{\eta(i)}_{\cF'}$ such that
$$\eta(g \cdot i) = \delta(g) \cdot \eta(i) \quad\text{and}\quad \Delta(x)_{\eta(i)} = \Delta_i(x_i) \quad\text{for all $g \in \Gamma$ and a.e. $x \in X_\cF$.}$$
Since $\eta$ preserves the orbits of $\Gamma \actson I$, it defines a permutation $\etab$ of $\N$ such that $\eta(i) \in \Gamma/\Gamma_{\etab(n)}$ iff $i \in \Gamma/\Gamma_n$. Also $\Delta_i$ only depends on the orbit of $i$, yielding $\Delta_n : X^i_\cF \recht X^{\eta(i)}_{\cF'}$ whenever $i \in \Gamma/\Gamma_n$.

Note that $\delta(\Stab i) = \Stab \eta(i)$. For all $n \neq m$ and all $\delta \in \Aut \Gamma$ we have that $\delta(\Gamma_n) \neq \Gamma_m$. Hence $\etab$ is the identity. Denote by $H^i$ the copy of $H$ in $H^{(I)}$ sitting in position $i$. Fix $n \in \N$ and $i \in \Gamma/\Gamma_n$. Put $j := \eta(i)$ and note that $j \in \Gamma/\Gamma_n$. Since the quasi-normalizer of $\Gamma_n$ inside $\Gamma$ equals $\Gamma_n$ and since $\Gamma_n$ acts with infinite orbits on $\Gamma/\Gamma_m$ for all $m \neq n$, it follows that the action
$$\Stab i \actson \prod_{j, j \neq i} X^j_\cF$$
is weakly mixing. Since the $1$-cocycle $\om$ is a homomorphism on $\Stab i$ and $H^i$ commutes with $\Stab i$, it follows from \cite[Proposition 3.6]{Po05} that for all $h \in H^i$, the map $x \mapsto \om(h,x)$ only depends on the coordinate $x_i$ and that $\om(h,x)$ commutes with $\delta(\Stab i) = \Stab j$. Since $(\Stab j) \cdot k$ is infinite for all $k \neq j$ and since $\Stab j < \Gamma$ has a trivial centralizer in $\Gamma$, it follows that $\om(h,x) \in H^j$. A similar reasoning holds for the inverse $1$-cocycle and we conclude that $\Delta_n : X^i_\cF \recht X^j_{\cF'}$ is an orbit equivalence between $H \actson X^i_\cF$ and $H \actson X^j_{\cF'}$. Since both $i,j \in \Gamma/\Gamma_n$ and since the actions $H \actson X_1$ and $H \actson X_2$ are not orbit equivalent, it follows that either $n$ belongs to both $\cF$ and $\cF'$ or that $n$ belongs to both $\N - \cF$ and $\N - \cF'$. This holds for all $n$ and we conclude that $\cF = \cF'$.
\end{proof}

\end{document}